\documentclass{article}[12pt]
\usepackage[utf8]{inputenc}
\usepackage{setspace,enumerate}
\usepackage[a4paper,margin=1.25in]{geometry} 
\usepackage{mathpazo}
\usepackage{amsmath,amssymb,amsfonts,amsthm}
\usepackage{mathtools}
\usepackage{graphicx,comment,verbatim}
\usepackage{hyperref}
\usepackage{bm}
\usepackage{amsmath,amssymb,amsfonts,geometry,bm}
\usepackage{comment}
\usepackage{amsthm}
\usepackage{tikz}
\usetikzlibrary{arrows.meta,positioning,calc}
\usepackage{pgfplots}
\pgfplotsset{compat=1.17}
\usepackage[ruled,vlined]{algorithm2e}
\theoremstyle{plain}
\newtheorem{theorem}{Theorem}[section]
\newtheorem{proposition}[theorem]{Proposition}
\newtheorem{lemma}[theorem]{Lemma}
\newtheorem{corollary}[theorem]{Corollary}
\newtheorem{fact}[theorem]{Fact}
\theoremstyle{definition} 
\newtheorem{definition}[theorem]{Definition}
\newtheorem{assumption}[theorem]{Assumption}
\newtheorem{example}[theorem]{Example}
\theoremstyle{remark}
\newtheorem{remark}[theorem]{Remark}
 
\begin{document}

\title{\textbf{From Optimal Transport to Optimal Quantization:\\
A Variational Study of the Witsenhausen Counterexample}}
\author{Quanyan Zhu}
\date{}

\maketitle

\begin{abstract}
We study the scalar Witsenhausen counterexample through optimal transport,
building on the transport formulation of Wu and Verd\'u in which the first
controller is a map in Wasserstein space. Absorbing that control into a
transport map, the problem becomes the variational problem
\(J^\star=\inf_{Q}\{k^2W_2^2(P,Q)+\operatorname{mmse}(Q)\}\) over target
laws \(Q\), balancing a quadratic Wasserstein transport cost against a
minimum mean-square estimation error, with the optimal first controller
recovered as the monotone rearrangement pushing the prior \(P\) to the
minimizer \(Q^\star\). We characterize \(Q^\star\): its existence, absolute
continuity, an Euler-Lagrange condition, and a semi-closed-form Gaussian
benchmark with an explicit linear-optimality threshold; and we record an
equivalent Fisher-information form of the estimation cost. Our main
contribution is computational: restricting to finitely supported laws yields
a finite-dimensional program that we show is an \emph{MMSE-regularized
optimal quantizer}, whose stationarity conditions pair centroid levels with
Voronoi decision cells, reduce to classical Lloyd-Max as control becomes
expensive, and are solved by a deterministic-annealing homotopy in the
control penalty \(k\). We give the small- and large-\(k\) asymptotics of
\(J^\star\) and the explicit limiting controllers (linear when control is
expensive, a two-level signalling quantizer when it is cheap), and
illustrate the theory numerically in both regimes.
\end{abstract}

\section{Introduction}

Witsenhausen's counterexample~\cite{witsenhausen1968counterexample} is the
canonical demonstration that decentralized control is fundamentally
different from centralized control: in a two-stage
linear-quadratic-Gaussian (LQG) system with a nonclassical information
pattern, the optimal controller is \emph{nonlinear}, even though every
random variable is Gaussian and every cost is quadratic. Nearly six decades
later it remains a central benchmark in team decision theory and networked
control~\cite{yuksel2024stochastic}: the exact optimal cost and controllers
are still unknown in closed form, and even careful numerical optimization
is delicate because the problem is \emph{nonconvex}.

This paper studies the scalar counterexample through the lens of
\emph{optimal transport}. Absorbing the first controller into a transport
map recasts the problem as a variational problem over probability measures,
\begin{equation}
J^\star=\inf_{Q\in\mathcal P_2(\mathbb R)}
\bigl\{\,k^2\,W_2^2(P,Q)+\operatorname{mmse}(Q)\,\bigr\}
\qquad(P=\mathrm{Law}(X_0)),
\label{eq:intro-variational}
\end{equation}
which trades a quadratic Wasserstein \emph{transport} cost against a
minimum mean-square \emph{estimation} cost; the minimizer \(Q^\star\)
determines the optimal first controller as a monotone transport map. This
transport viewpoint is due to Wu and Verd\'u~\cite{wu2011witsenhausen}; our
aim is to build on it a complete variational \emph{and computational}
theory, whose central message is that, seen this way, the counterexample is
an MMSE-regularized \emph{quantization} problem.

\paragraph{Contributions.}
Building on the optimal-transport formulation of Wu and
Verd\'u~\cite{wu2011witsenhausen}, who first recast the first controller as a
transport map in Wasserstein space, we develop a complete variational
\emph{and} computational theory of the scalar problem. Analytically, we
prove the reformulation~\eqref{eq:intro-variational} and identify the
optimal first controller as the monotone rearrangement pushing \(P\) to
\(Q^\star\) (Theorem~\ref{thm:ot-reformulation}), and we characterize the
minimizer itself: its existence, its absolute continuity (so \(Q^\star\) is
never atomic), an Euler-Lagrange optimality condition, and a
semi-closed-form Gaussian benchmark with an explicit linear-optimality
threshold (\S\ref{sec:characterization}), recording along the way an
equivalent Fisher-information form of the estimation cost that explains the
concavity of the objective and links it to the I-MMSE relation.
Computationally (the central contribution), we show that restricting to
finitely supported laws turns the problem into an \emph{MMSE-regularized
optimal quantizer}: its stationarity conditions pair centroid levels with
Voronoi decision cells and reduce to the classical Lloyd-Max quantizer as
control becomes expensive, which in turn yields a deterministic-annealing
solver that tracks the global branch through the level-splitting
bifurcations underlying the nonconvexity (\S\ref{sec:finite-program-sec}).
Finally, we derive the small- and large-\(k\) asymptotics of \(J^\star\) and
the explicit limiting controllers (affine when control is expensive, a
two-level signalling quantizer when it is cheap), and illustrate the whole
theory numerically across the cheap- and expensive-control regimes
(\S\ref{sec:numerics}).

\paragraph{Organization.} The remainder of this section formulates the
problem and proves the reformulation~\eqref{eq:intro-variational}.
Section~\ref{sec:related} reviews related work;
Section~\ref{sec:characterization} characterizes \(Q^\star\), existence,
absolute continuity, the MMSE-Fisher identity, and the Euler-Lagrange
condition (in score form), together with the Gaussian benchmark;
Section~\ref{sec:finite-program-sec} builds the finite-dimensional
quantization program and its solver; and Section~\ref{sec:numerics}
presents the numerical study, asymptotics, and limiting controllers.

\subsection{Classical Formulation of Witsenhausen's Problem}

\begin{definition}[Witsenhausen problem]
\label{def:witsenhausen}
Let \(X_0\sim\mathcal{N}(0,\sigma^2)\) and \(N\sim\mathcal{N}(0,1)\) be
independent, and let \(k>0\). Controller~1 applies
\(U_1=\gamma_1(X_0)\), producing the state \(Y=X_0+U_1\); Controller~2
observes \(Z=Y+N\) and applies \(U_2=\gamma_2(Z)\). The Witsenhausen
problem is to minimize
\begin{equation}
J(\gamma_1,\gamma_2)
=
k^2\,\mathbb{E}[U_1^2]+\mathbb{E}\bigl[(Y-U_2)^2\bigr]
\label{eq:classical-cost}
\end{equation}
over measurable policies \(\gamma_1,\gamma_2\), and we write \(J^\star\)
for the optimal value.
\end{definition}

\begin{figure}[t]
\centering
\begin{tikzpicture}[>=Latex,
  box/.style={draw,rounded corners,minimum height=9mm,minimum width=11mm},
  sum/.style={draw,circle,inner sep=1.3pt}]
\node (x0) {\(X_0\)};
\node[box,right=1.1cm of x0] (c1) {\(\gamma_1\)};
\node[sum,right=1.2cm of c1] (s1) {\(+\)};
\node[sum,right=1.7cm of s1] (s2) {\(+\)};
\node[box,right=1.2cm of s2] (c2) {\(\gamma_2\)};
\node[right=1.0cm of c2] (u2) {\(U_2\)};
\coordinate (br) at ($(x0)!0.5!(c1)$);
\draw[->] (x0) -- (c1);
\draw[->] (c1) -- node[above,font=\small]{\(U_1\)} (s1);
\draw[->] (br) -- ++(0,-0.85) -| (s1);
\draw[->] (s1) -- node[above,font=\small]{\(Y\)} (s2);
\draw[->] (s2) -- node[above,font=\small]{\(Z\)} (c2);
\draw[->] (c2) -- (u2);
\node[above=0.75cm of s2] (n) {\(N\)};
\draw[->] (n) -- (s2);
\end{tikzpicture}
\caption{Information structure of Witsenhausen's problem. Controller~1
sees \(X_0\) and forms \(Y=X_0+\gamma_1(X_0)\) at quadratic control cost
\(k^2\mathbb{E}[U_1^2]\); Controller~2 sees only the noisy observation
\(Z=Y+N\) and must estimate \(Y\). The two controllers share no
information, which is the source of the nonclassical behavior.}
\label{fig:blockdiagram}
\end{figure}

It is convenient to absorb the first control into the map
\(f(x):=x+\gamma_1(x)\) (so that \(Y=f(X_0)\) and
\(U_1=f(X_0)-X_0\)) and to write \(g:=\gamma_2\). Then
\eqref{eq:classical-cost} becomes
\begin{equation}
J(f,g)
=
k^2\,\mathbb{E}\bigl[(f(X_0)-X_0)^2\bigr]
+\mathbb{E}\bigl[(f(X_0)-g(f(X_0)+N))^2\bigr].
\label{eq:cost-fg}
\end{equation}

\subsection{Notation and Imported Facts}
\label{sec:prelim}

We write \(P:=\mathrm{Law}(X_0)=\mathcal{N}(0,\sigma^2)\); \(\phi\) for
the standard normal density; \(F_\mu\) for the cumulative distribution
function of a measure \(\mu\); and \(\mathcal{P}_2(\mathbb{R})\) for the
Borel probability measures on \(\mathbb{R}\) with finite second moment.
For \(P,Q\in\mathcal{P}_2(\mathbb{R})\) the quadratic Wasserstein
distance is
\begin{equation}
W_2^2(P,Q)=\inf_{\pi\in\Pi(P,Q)}\int_{\mathbb{R}^2}(x-y)^2\,\pi(dx,dy),
\label{eq:W2-def}
\end{equation}
the infimum over couplings \(\pi\) with marginals \(P,Q\).

\emph{Notation for the optimizer.} We reserve capital letters for
probability measures and lowercase for their densities: \(Q\) denotes a
generic law, \(Q^\star\) the minimizer of~\eqref{eq:OT-variational-intro},
and \(q^\star\) the density of \(Q^\star\) (which exists by
Proposition~\ref{prop:ac}). In \S\ref{sec:fisher} we also use \(Q_n\) for
the optimal law supported on \(n\) points; these are atomic
approximations, and \(Q_n\rightharpoonup Q^\star\) weakly as \(n\to\infty\)
(so the \(Q_n\) converge to the \emph{measure} \(Q^\star\), whose density
is \(q^\star\)). Generic sequences of measures are written \(\nu_j\).

To keep the development self-contained, we collect here the four standard
results we invoke; everything else is proved from them.

\begin{fact}[One-dimensional optimal transport {\cite{santambrogio2015optimal}}]
\label{fact:1d-ot}
If \(P\) is absolutely continuous, then for every
\(Q\in\mathcal{P}_2(\mathbb{R})\),
\[
W_2^2(P,Q)=\int_0^1\bigl(F_P^{-1}(t)-F_Q^{-1}(t)\bigr)^2\,dt,
\]
and the infimum in \eqref{eq:W2-def} is attained by the (comonotone)
deterministic coupling induced by the monotone map
\(T=F_Q^{-1}\circ F_P\), which pushes \(P\) forward to \(Q\).
\end{fact}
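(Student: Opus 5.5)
The plan is the classical quantile-coupling argument, in three steps: reduce a general coupling to an ordered two-point swap, show that swaps never increase cost, and pass to the limit.

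\textbf{Step 1: reduction to comonotone couplings.} Fix \(P\) absolutely continuous and \(Q\in\mathcal P_2(\mathbb R)\). Let \(U\sim\mathrm{Unif}(0,1)\). The pair \((F_P^{-1}(U),F_Q^{-1}(U))\) has marginals \(P\) and \(Q\), so it is admissible in \eqref{eq:W2-def}. Its cost is \(\int_0^1(F_P^{-1}(t)-F_Q^{-1}(t))^2\,dt\), which gives the upper bound. For the lower bound I expand \((x-y)^2=x^2+y^2-2xy\). The two second moments are fixed by the marginals, so minimizing the cost is the same as maximizing \(\int xy\,d\pi\) over \(\pi\in\Pi(P,Q)\). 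The claim is that the comonotone coupling maximizes this correlation.

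\textbf{Step 2: optimality of the comonotone coupling.} I use the layer-cake (Hoeffding) identity
\[
\mathrm{Cov}_\pi(X,Y)=\iint\bigl(\pi(X\le s,Y\le t)-F_P(s)F_Q(t)\bigr)\,ds\,dt .
\]
This holds for any coupling with finite second moments, and the integrability is guaranteed by \(\mathcal P_2\). The Fréchet–Hoeffding bound gives \(\pi(X\le s,Y\le t)\le\min(F_P(s),F_Q(t))\), and the comonotone coupling attains this bound pointwise. Since the means are also fixed by the marginals, \(\int xy\,d\pi\) is maximized there, and this yields the quantile formula.

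This step is the main obstacle. I will either cite the Hoeffding identity or prove it quickly: write \(x=\int(\mathbf 1_{s<x}-\mathbf 1_{s<0})\,ds\), similarly for \(y\), and apply Fubini, justified by the finite second moments.

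\textbf{Step 3: the deterministic map.} Because \(P\) has no atoms, \(F_P\) is continuous. Hence \(F_P(X)\sim\mathrm{Unif}(0,1)\) when \(X\sim P\). Moreover \(F_P^{-1}(F_P(X))=X\) almost surely, since the exceptional set lies in the flat pieces of \(F_P\), which are \(P\)-null. Taking \(U=F_P(X)\) in Step 1, the optimal coupling becomes \((X,F_Q^{-1}(F_P(X)))\). Thus \(T=F_Q^{-1}\circ F_P\) is nondecreasing, pushes \(P\) to \(Q\), and attains the infimum. It is also worth noting that the infimum is attained at all because it is achieved by this explicit coupling.
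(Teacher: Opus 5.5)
Your proof is correct, but it follows a different route from the one the paper relies on. The paper does not prove this Fact; it imports it from \cite{santambrogio2015optimal}. There the argument goes through the structure of optimal plans. An optimal \(\pi\in\Pi(P,Q)\) exists by compactness and lower semicontinuity, and its support is \(c\)-cyclically monotone. For a strictly convex cost \(h(x-y)\) this forces a monotone support (\(x<x'\Rightarrow y\le y'\)). The only plan in \(\Pi(P,Q)\) with monotone support is the comonotone one \((F_P^{-1},F_Q^{-1})_\#\mathrm{Leb}|_{[0,1]}\), and the quantile formula is simply its cost.

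You instead prove one inequality against every coupling at once. Expanding the square reduces the problem to maximizing \(\int xy\,d\pi\). The Hoeffding identity, together with the elementary Fr\'echet bound \(\pi(X\le s,Y\le t)\le\min(F_P(s),F_Q(t))\), then gives the lower bound; the quantile coupling attains that bound because \(F^{-1}(u)\le s\iff u\le F(s)\).

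Your route buys a short, self-contained proof in which attainment comes for free, with no existence theorem and no cyclical monotonicity. It also makes visible that the quantile formula and the optimality of the comonotone plan hold for all \(P,Q\in\mathcal P_2(\mathbb R)\). Absolute continuity (in fact just atomlessness) enters only in your Step 3, to realize the plan by \(T=F_Q^{-1}\circ F_P\).

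The cited route, in exchange, also yields uniqueness of the optimal plan and applies verbatim to any strictly convex \(h(x-y)\). Your reduction to \(\int xy\,d\pi\) is specific to the quadratic cost; it extends to other submodular costs only through a two-variable version of the same identity. Since the statement asserts only attainment, nothing is lost here.

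One presentational fix: your opening sentence announces a two-point-swap argument followed by a limit, but Steps 1--3 never use swaps or any approximation. The Hoeffding--Fr\'echet comparison is complete on its own, so the roadmap should describe that instead.
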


\begin{fact}[First variation of the transport term {\cite{santambrogio2015optimal}}]
\label{fact:wass-var}
Let \(\varphi_Q\) be a Kantorovich potential for the cost
\(c(x,y)=\tfrac12(x-y)^2\), normalized by
\(\tfrac12 W_2^2(P,Q)=\int\varphi_Q\,dQ+\int\varphi_Q^{c}\,dP\). Then the
linear first variation of \(Q\mapsto\tfrac12 W_2^2(P,Q)\) is represented,
up to an additive constant, by \(\varphi_Q\); and in one dimension
\(\varphi_Q'(y)=y-T_Q(y)\) with \(T_Q=F_P^{-1}\circ F_Q\) the monotone map
pushing \(Q\) to \(P\).
\end{fact}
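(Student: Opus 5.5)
The plan is to derive both assertions from Kantorovich duality for the cost \(c(x,y)=\tfrac12(x-y)^2\). Write
\[
\tfrac12 W_2^2(P,Q)=\sup\Bigl\{\int\varphi\,dQ+\int\psi\,dP:\ \varphi(y)+\psi(x)\le c(x,y)\Bigr\}.
\]
The supremum is attained by a \(c\)-concave pair \((\varphi_Q,\varphi_Q^c)\). Since it is a supremum of functionals that are affine in \(Q\), the map \(Q\mapsto\tfrac12W_2^2(P,Q)\) is convex. Fix an admissible perturbation \(Q_\varepsilon=Q+\varepsilon(\tilde Q-Q)\), with \(\tilde Q\in\mathcal P_2(\mathbb R)\) and, to start, \(\tilde Q\) compactly supported. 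We must show
\[
\frac{d}{d\varepsilon}\Big|_{\varepsilon=0^+}\tfrac12W_2^2(P,Q_\varepsilon)=\int\varphi_Q\,d(\tilde Q-Q).
\]

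\textbf{Lower bound.} Testing the dual problem at \(Q_\varepsilon\) with the fixed pair \((\varphi_Q,\varphi_Q^c)\) gives
\[
\tfrac12W_2^2(P,Q_\varepsilon)-\tfrac12W_2^2(P,Q)\ \ge\ \varepsilon\int\varphi_Q\,d(\tilde Q-Q),
\]
so the \(\liminf\) of the difference quotients is at least \(\int\varphi_Q\,d(\tilde Q-Q)\).

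\textbf{Upper bound.} Take optimal potentials \((\varphi_\varepsilon,\varphi_\varepsilon^c)\) at \(Q_\varepsilon\). Testing them at \(Q\) gives the reverse inequality with \(\varphi_\varepsilon\) in place of \(\varphi_Q\). It then suffices to show \(\int\varphi_\varepsilon\,d(\tilde Q-Q)\to\int\varphi_Q\,d(\tilde Q-Q)\). The key facts are:
\begin{itemize}
\item Each \(c\)-concave \(\varphi\) is of the form \(\tfrac12y^2-(\text{convex})\), hence locally Lipschitz with at most quadratic growth.
\item After normalizing by an additive constant (e.g.\ \(\varphi_\varepsilon(0)=0\)), Arzel\`a--Ascoli yields locally uniform subsequential limits.
\item Stability of optimal pairs shows that every limit is optimal for \(Q\).
\end{itemize}
Convergence of the whole family then follows from uniqueness of the Kantorovich potential up to constants. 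This is where absolute continuity of \(P\) with support all of \(\mathbb R\) enters. The gradient \(\nabla\varphi_Q^c(x)=x-T(x)\) is determined \(P\)-a.e.\ by the Brenier map, so \(\varphi_Q^c\) is determined up to a constant on the connected support. Consequently \(\varphi_Q=(\varphi_Q^c)^c\) is determined up to a constant as well. The quadratic growth bounds and finite second moments supply the domination needed to pass to the limit in \(\int\varphi_\varepsilon\,dQ\). The general \(\tilde Q\in\mathcal P_2\) follows by truncation.

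\textbf{Derivative formula.} Use the representation
\[
\varphi_Q(y)=\inf_x\bigl[\tfrac12(x-y)^2-\varphi_Q^c(x)\bigr].
\]
The infimum is attained at the point \(x\) paired with \(y\) by the optimal comonotone coupling of Fact~\ref{fact:1d-ot}, namely \(x=T_Q(y)=F_P^{-1}(F_Q(y))\). An envelope argument then gives \(\varphi_Q'(y)=y-T_Q(y)\) wherever \(\varphi_Q\) is differentiable. Because \(\varphi_Q\) is semiconcave, this holds Lebesgue-a.e., and it holds \(Q\)-a.e.\ once \(Q\) has no atoms. On the support of \(Q\) the identity \(\varphi_Q(y)+\varphi_Q^c(T_Q(y))=c(T_Q(y),y)\) expresses the complementary slackness of the optimal coupling.

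\textbf{Main obstacle.} The hardest step is the upper-bound convergence \(\varphi_\varepsilon\to\varphi_Q\) on the unbounded line. Uniqueness of potentials there relies on connectedness of \(\operatorname{supp}P\), and a dominated-convergence bound is needed for the quadratically growing potentials. I would first handle compactly supported perturbations directly, using uniform local Lipschitz bounds, and then extend by density in \(\mathcal P_2\).
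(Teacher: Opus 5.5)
Your architecture is the compact-domain argument behind the citation (Santambrogio: lower bound by testing the dual at \(Q_\varepsilon\) with the fixed pair; upper bound via optimal pairs at \(Q_\varepsilon\), compactness, and uniqueness of the potential). The paper offers nothing for this Fact beyond that citation.

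\textbf{The gap is in the transfer to \(\mathbb R\).} Your claim that every \(c\)-concave \(\varphi\) is locally Lipschitz on \(\mathbb R\) with at most quadratic growth is false for the \(Q\)-side potential. Write \(\varphi_Q^c(x)=\tfrac12x^2-u(x)\) with \(u'=F_Q^{-1}\circ F_P\); then \(\varphi_Q=\tfrac12y^2-u^*\).
\begin{itemize}
\item If \(\operatorname{supp}Q\subset(-\infty,b]\), then \(u'\le b\) on all of \(\mathbb R\). This is where the full support of \(P\) enters. Hence \(u^*\equiv+\infty\) and \(\varphi_Q\equiv-\infty\) on \((b,\infty)\).
\item If \(F_Q^{-1}\circ F_P\) grows sublinearly, \(\varphi_Q\) decays faster than any quadratic.
\end{itemize}
\(c\)-concavity only yields the one-sided bound \(\varphi(y)\le\tfrac12(x_0-y)^2-\varphi^c(x_0)\).

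This changes the conclusion, not just the proof. Take \(P=\mathcal N(0,1)\), \(Q=\delta_0\), \(\tilde Q=\delta_1\), and \(q_\varepsilon:=\Phi^{-1}(1-\varepsilon)\) with \(\Phi\) the standard normal distribution function. The monotone coupling sends \(\{x>q_\varepsilon\}\) to \(1\), and the Mills-ratio bound gives
\[
\frac{\tfrac12W_2^2(P,Q_\varepsilon)-\tfrac12W_2^2(P,Q)}{\varepsilon}
=\frac12-\frac{\phi(q_\varepsilon)}{\varepsilon}\le\frac12-q_\varepsilon\longrightarrow-\infty .
\]
This matches \(\varphi_Q(y)=\inf_x(\tfrac12y^2-xy)=-\infty\) for \(y\neq0\), taking \(\varphi_Q^c(x)=\tfrac12x^2\). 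Consequently several of your steps fail on \(\mathbb R\):
\begin{itemize}
\item Arzel\`a--Ascoli for the \(\varphi_\varepsilon\);
\item the normalization \(\varphi_\varepsilon(0)=0\);
\item the two-sided domination for \(\int\varphi_\varepsilon\,dQ\);
\item the truncation step to general \(\tilde Q\in\mathcal P_2\).
\end{itemize}
No argument can give a finite limit in the generality you claim. The Fact holds only with values in \([-\infty,\infty)\), or under extra hypotheses on \(Q\) or on the directions.

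\textbf{A route that works on \(\mathbb R\).} Run the compactness on the \(P\)-side potentials \(\psi_\varepsilon:=\varphi_\varepsilon^c\), normalized by \(\psi_\varepsilon(0)=0\).
\begin{itemize}
\item Since \(P\) has full support, these are finite with \(\psi_\varepsilon'=x-T_\varepsilon\), where \(T_\varepsilon=F_{Q_\varepsilon}^{-1}\circ F_P\).
\item \(T_\varepsilon\) is locally bounded uniformly in \(\varepsilon\) and converges a.e.\ to \(F_Q^{-1}\circ F_P\), because \(\sup|F_{Q_\varepsilon}-F_Q|\le\varepsilon\). So \(\psi_\varepsilon\) converges locally uniformly to the primitive of \(x-F_Q^{-1}\circ F_P\), which is \(\varphi_Q^c\) up to a constant.
\item Then \(\varphi_\varepsilon=\psi_\varepsilon^c\) gives \(\limsup_\varepsilon\varphi_\varepsilon\le\varphi_Q\) pointwise.
\end{itemize}
Now use only upper bounds. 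The dual identity at \(Q_\varepsilon\) gives
\[
\int\varphi_\varepsilon\,d(\tilde Q-Q)=\frac{\int\varphi_\varepsilon\,d\tilde Q-\tfrac12W_2^2(P,Q_\varepsilon)+\int\psi_\varepsilon\,dP}{1-\varepsilon}.
\]
Reverse Fatou, with the uniform bounds \(\varphi_\varepsilon(y)\le\tfrac12y^2\) and \(\psi_\varepsilon(x)\le C(1+x^2)\), yields \(\limsup_\varepsilon\int\varphi_\varepsilon\,d(\tilde Q-Q)\le\int\varphi_Q\,d(\tilde Q-Q)\in[-\infty,\infty)\).

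\textbf{Two smaller points in your derivative step.}
\begin{itemize}
\item The formula cannot hold Lebesgue-a.e.\ off the convex hull of \(\operatorname{supp}Q\), where \(\varphi_Q=-\infty\).
\item Passing to \(Q\)-a.e.\ for atomless \(Q\) needs the fact that \(\tfrac12y^2-\varphi_Q\) is convex and so has only countably many kinks. Lebesgue-a.e.\ differentiability alone says nothing for a singular continuous \(Q\).
\end{itemize}
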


\begin{fact}[Functional properties of MMSE {\cite{wu2012functional}}]
\label{fact:mmse}
Under additive standard-Gaussian noise, the map
\(Q\mapsto\operatorname{mmse}(Q)\) on \(\mathcal{P}_2(\mathbb{R})\) is
concave and continuous with respect to weak convergence.
\end{fact}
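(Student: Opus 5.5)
Since Fact~\ref{fact:mmse} is imported from~\cite{wu2012functional}, I only sketch how I would verify it directly, taking concavity and weak continuity in turn. In both parts, \(Y\sim Q\) and \(Z=Y+N\) with \(N\sim\mathcal N(0,1)\) independent of \(Y\).

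\emph{Concavity.} I would use the variational representation
\[
\operatorname{mmse}(Q)=\inf_{g}\int\!\!\int \bigl(y-g(y+n)\bigr)^2\,\phi(n)\,dn\,Q(dy).
\]
Here the infimum runs over measurable \(g\), which may be taken bounded by truncation. For each fixed \(g\) the integrand is linear in \(Q\). A pointwise infimum of affine functionals is concave, so for \(Q_\lambda=\lambda Q_1+(1-\lambda)Q_0\) we get \(\operatorname{mmse}(Q_\lambda)\ge\lambda\operatorname{mmse}(Q_1)+(1-\lambda)\operatorname{mmse}(Q_0)\). This is immediate.

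\emph{Continuity.} The key trick is to estimate the noise rather than the signal. Since \(Y=Z-N\), the error \(Y-\mathbb E[Y\mid Z]\) equals \(-(N-\mathbb E[N\mid Z])\). Hence
\[
\operatorname{mmse}(Q)=1-\mathbb E\bigl[h_Q(Z)^2\bigr],\qquad
h_Q(z)=\frac{\int (z-y)\phi(z-y)\,Q(dy)}{\int\phi(z-y)\,Q(dy)}.
\]
This removes all dependence on second moments of \(Q\), which weak convergence does not control. Let \(\nu_j\rightharpoonup Q\). For each fixed \(z\), the functions \(y\mapsto(z-y)\phi(z-y)\) and \(y\mapsto\phi(z-y)\) are bounded and continuous. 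So the numerator and the output density \(p_{\nu_j}(z)=\int\phi(z-y)\,\nu_j(dy)>0\) converge pointwise, giving \(h_{\nu_j}\to h_Q\) and \(p_{\nu_j}\to p_Q\) pointwise. It remains to pass to the limit in \(\int h_{\nu_j}^2\,p_{\nu_j}\,dz\).

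\emph{Main obstacle: uniform integrability.} On a compact set \(|z|\le R\), dominated convergence applies. Indeed, \(p_{\nu_j}\le\phi(0)\), and \(|h_{\nu_j}(z)|\) is locally bounded uniformly in \(j\) because the mass of \(\nu_j\) is tight. For the tails I would use conditional Jensen, \(h(Z)^4\le\mathbb E[N^4\mid Z]\), together with Cauchy--Schwarz:
\[
\int_{|z|>R}h_{\nu_j}^2\,p_{\nu_j}\,dz\le\sqrt{3}\;\Pr\nolimits_{\nu_j}(|Z|>R)^{1/2}.
\]
The outputs \(\nu_j*\mathcal N(0,1)\) are tight because \(\{\nu_j\}\) is, so this tail is small uniformly in \(j\) for \(R\) large. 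Combining the two regions gives \(\operatorname{mmse}(\nu_j)\to\operatorname{mmse}(Q)\). The delicate point is the locally uniform bound on \(h_{\nu_j}\) on compacts. If it proves awkward, I would instead apply Vitali's theorem directly, with uniform integrability supplied by the fourth-moment bound \(\mathbb E[h^4]\le 3\).
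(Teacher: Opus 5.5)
Your proposal is correct, and it supplies a proof where the paper has none. The paper imports this fact from Wu and Verd\'u, so there is no internal argument to compare against route for route. Your two devices are the standard ones: a pointwise infimum of functionals affine in \(Q\) for concavity, and estimating the noise instead of the signal for continuity.

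They also tie in with material the paper proves later. Since \(h_Q(z)=-p_Z'(z)/p_Z(z)=-(\log p_Z)'(z)\), your identity \(\operatorname{mmse}(Q)=1-\mathbb E[h_Q(Z)^2]\) is exactly the MMSE--Fisher identity of Proposition~\ref{prop:mmse-fisher}. You obtain it from the orthogonality relation \(Y-\mathbb E[Y\mid Z]=-(N-\mathbb E[N\mid Z])\), without differentiating under the integral or discarding a boundary term. Your continuity result is therefore weak continuity of \(Q\mapsto\mathcal I(Q*\phi)\). Conditional Jensen, \(\mathbb E[h^2]\le\mathbb E[N^2]=1\), is the Stam-type bound \(\mathcal I\le1\) of Corollary~\ref{cor:fisher-bounds}, and your fourth-moment version upgrades it to the uniform integrability you need. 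For concavity, the proof of Proposition~\ref{prop:ac} implicitly offers a second route: \(\operatorname{mmse}=\mathbb E_Q[Y^2]-\Phi(Q)\), with \(\Phi\) convex by joint convexity of \((g,p)\mapsto g^2/p\). Your infimum argument reaches the same conclusion with less machinery.

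The step you flag as delicate is routine. Tightness gives \(K\) with \(\nu_j([-K,K])\ge\tfrac12\) for all \(j\), so \(p_{\nu_j}(z)\ge\tfrac12\phi(R+K)\) on \(|z|\le R\). The numerator is bounded by \(\sup_u|u|\phi(u)\), so \(h_{\nu_j}^2p_{\nu_j}\) is uniformly bounded on \(|z|\le R\) and dominated convergence applies. If you use Vitali instead, remember that on the infinite measure space \(dz\) you need your tail bound for tightness, in addition to uniform integrability.

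Finally, your argument gives continuity along every weakly convergent sequence, with no second-moment bound. That is more than the paper uses when it invokes the fact in Propositions~\ref{prop:existence}, \ref{prop:finite-level} and~\ref{prop:asymptotics}.
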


\begin{fact}[Regularity of the Witsenhausen optimum {\cite{wu2011witsenhausen}}]
\label{fact:regularity}
For \(P=\mathcal{N}(0,\sigma^2)\) and \(k>0\), any optimal first
controller of the reformulated problem is odd and strictly increasing
with a real-analytic left inverse; consequently the optimal law
\(Q^\star\) is symmetric about the origin and absolutely continuous with
interval support.
\end{fact}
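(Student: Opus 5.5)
Since this is an imported result, I would reprove it by following the structure of Wu and Verd\'u's argument. The tools are Facts~\ref{fact:1d-ot}--\ref{fact:mmse} together with a first-order optimality condition for the reformulated problem $\inf_Q\{k^2W_2^2(P,Q)+\operatorname{mmse}(Q)\}$.

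The first step is monotonicity. By Fact~\ref{fact:1d-ot}, replacing any first controller $f$ with law $Q=f_\#P$ by the monotone rearrangement $T=F_Q^{-1}\circ F_P$ leaves $\operatorname{mmse}(Q)$ unchanged and does not increase the transport cost. This cost is $\mathbb{E}[(f(X_0)-X_0)^2]$, and it is strictly larger than $W_2^2(P,Q)$ unless the coupling is already comonotone. Hence every optimal $f$ is nondecreasing $P$-a.e.

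The second step is the stationarity condition. I would perturb $Q^\star$ along $Q_\varepsilon=(1-\varepsilon)Q^\star+\varepsilon\delta_{y}$ and also along pushforwards $(\mathrm{id}+\varepsilon\xi)_\#Q^\star$. Fact~\ref{fact:wass-var} gives the variation of the transport term, namely $\varphi'(y)=y-T_{Q^\star}(y)$. For the estimation term I would compute the first variation of $\operatorname{mmse}$ directly. Writing $\operatorname{mmse}(Q)=\mathbb{E}Y^2-\int (p_Z')^2/p_Z\,dz$-type expressions with $p_Z=Q*\phi$, its first variation at $y$ is a Gaussian integral $\psi_{Q}(y)=\int \phi(z-y)\,G_Q(z)\,dz$. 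Here $G_Q$ is built from $\mathbb{E}[Y\mid Z=z]$ and is of polynomial growth. Equating variations on the support gives
\[
T_{Q^\star}(y)=y+\tfrac{1}{2k^2}\,\psi_{Q^\star}'(y)=:h(y)
\]
for $y\in\operatorname{supp}Q^\star$. Because $\psi$ is a convolution of $\phi$ with a tempered function, $h$ extends to an entire (hence real-analytic) function on $\mathbb{R}$, and it is a left inverse of $f$: $h(f(x))=x$ for $P$-a.e.\ $x$.

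The regularity claims then follow. If $Q^\star$ had an atom at $y_0$, then the nondecreasing $f$ would equal $y_0$ on an interval of positive $P$-mass, contradicting $h(f(x))=x$. So $Q^\star$ has no atoms, and $f$ is strictly increasing. Absolute continuity needs more: $f$ must not map a positive-measure set onto a null set. For this I would use that $h$ is analytic and nonconstant, so $h'$ has isolated zeros and $f=h^{-1}$ is locally Lipschitz away from a discrete set. For interval support, suppose the support has a gap $(a,b)$ with $a,b$ in the support. Then $f$ jumps there while $h(a)=h(b)$ equals the common $x$ at the jump point. I would then use the global-minimality inequality at points $y\in(a,b)$, i.e.\ $k^2\varphi+\psi\ge$ const, together with analyticity to obtain a contradiction. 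This step is delicate, and I may need Wu and Verd\'u's specific estimates.

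The main obstacle is oddness, since uniqueness of $Q^\star$ is not known and the objective is not convex. The reflection $\tilde Q=(-\mathrm{id})_\#Q^\star$ is also optimal, but mixing the two does not help, because $\operatorname{mmse}$ is concave and not convex. My first attempt would use the stationarity equation directly. The symmetry of $P$ gives $F_P^{-1}(1-t)=-F_P^{-1}(t)$, so $\tilde h(y):=-h(-y)$ satisfies the same fixed-point equation for $\tilde Q$. I would then try to show that the map $Q\mapsto h_Q$ admits only symmetric fixed points among strictly increasing analytic solutions, for example through a comparison or monotonicity argument on $F_{Q}$ versus $1-F_Q(-\cdot)$. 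If that fails, I would fall back on a symmetrization inequality specific to the Gaussian prior.
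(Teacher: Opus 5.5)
The paper does not prove this Fact. It imports it from Wu and Verd\'u, and the only piece it proves itself is the absence of atoms (Proposition~\ref{prop:ac}(i)), via a competitor that splits an atom. Your reproof therefore has to stand on its own, and it has two genuine gaps.

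\emph{First gap: the left-inverse identity is circular.} You derive stationarity from perturbations of the law, $(1-\varepsilon)Q^\star+\varepsilon\delta_y$ and $(\mathrm{id}+\varepsilon\xi)_\#Q^\star$, and express it through $T_{Q^\star}=F_P^{-1}\circ F_{Q^\star}$. Neither step handles atoms. A pushforward by a function of $y$ moves an atom rigidly and never splits it. Also, $T_{Q^\star}$ pushes $Q^\star$ onto $P$ only when $Q^\star$ is atomless.

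Suppose $Q^\star$ had an atom at $y_0$ with flat interval $I=f^{-1}(\{y_0\})$. Your perturbations then only yield $2k^2\bigl(y_0-\mathbb E[X_0\mid X_0\in I]\bigr)+\psi'(y_0)=0$, i.e.\ $h(y_0)$ equals the barycenter of $I$. That is no contradiction. So ``$h(f(x))=x$ for $P$-a.e.\ $x$'' already presupposes the no-atom conclusion you then draw from it.

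The repair is to perturb the controller rather than the law. For bounded $g$, consider
$\varepsilon\mapsto k^2\mathbb E\bigl[(f(X_0)+\varepsilon g(X_0)-X_0)^2\bigr]+\operatorname{mmse}\bigl((f+\varepsilon g)_\#P\bigr)$.
It is bounded below by $J\bigl((f+\varepsilon g)_\#P\bigr)\ge J^\star$ and equals $J^\star$ at $\varepsilon=0$. So its derivative, which is the map variation~\eqref{eq:app-grad} of the appendix, vanishes for every $g$. This gives $2k^2(f(x)-x)+\psi'(f(x))=0$ for $P$-a.e.\ $x$.

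With that in hand, absolute continuity is immediate, and your Lipschitz argument points the wrong way. What matters is not regularity of $f=h^{-1}$ but that $h$, being $C^1$, maps Lebesgue-null sets to null sets. For null $N$ one has $f^{-1}(N)\subseteq h(N)$ up to a $P$-null set, so $Q^\star(N)=0$. The zeros of $h'$ play no role.

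\emph{Second gap: interval support and oddness are not proved, and the first-order machinery cannot prove them.}

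For interval support, consider a gap $(a,b)$ of $\operatorname{supp}Q^\star$. There the complementary-slackness condition~\eqref{eq:EL} only asserts $\Lambda^\star\ge\lambda$, with equality at $a$ and $b$. Since $\psi$ is not convex ($\operatorname{mmse}$ is concave and rewards separated mass), $\Lambda^\star$ can rise above $\lambda$inside the gap and come back. Likewise, a jump of $f$ at $x_0$ is compatible with an analytic left inverse, which only needs $h(a)=h(b)=x_0$ with $h$ non-monotone on $(a,b)$. So ``strictly increasing with an analytic left inverse'' does not yield interval support, and you supply no other argument.

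For oddness, the stationarity conditions are reflection-equivariant, so they cannot distinguish $Q^\star$ from $(-\mathrm{id})_\#Q^\star$. The claim that \emph{every} minimizer is symmetric is a global, uniqueness-type statement. Nothing in your fixed-point equation excludes asymmetric solutions. The problem also has multiple stationary points (already in the Gaussian slice, Proposition~\ref{prop:gauss-foc}), so there is no uniqueness principle to lean on. And you give no symmetrization inequality; as you note, averaging with the reflection raises $\operatorname{mmse}$ by concavity.

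After the repair above, your argument delivers monotonicity, strict monotonicity, the real-analytic left inverse and absolute continuity. It does not deliver symmetry or interval support. Those need an ingredient beyond Facts~\ref{fact:1d-ot}--\ref{fact:mmse}, and the paper itself offers none, relying entirely on the citation.
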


\subsection{MMSE Reduction of the Inner Problem}

We first eliminate the second controller. Throughout,
\[
\operatorname{mmse}(Q):=\mathbb{E}\bigl[\operatorname{Var}(Y\mid Y+N)\bigr],
\qquad Y\sim Q,\ N\sim\mathcal{N}(0,1)\ \text{independent}.
\]

\begin{lemma}[MMSE reduction]
\label{lem:mmse-reduction}
Fix a measurable \(f\) with \(\mathbb{E}[f(X_0)^2]<\infty\), and let
\(Y=f(X_0)\), \(Q=\mathrm{Law}(Y)\). The inner minimization over \(g\)
is attained by the conditional mean
\(g_f^\star(z)=\mathbb{E}[Y\mid Y+N=z]\), and
\begin{equation}
J^\star(f):=\inf_g J(f,g)
=k^2\,\mathbb{E}\bigl[(f(X_0)-X_0)^2\bigr]+\operatorname{mmse}(Q).
\label{eq:Jstar-f}
\end{equation}
In particular the estimation cost depends on \(f\) only through the law
\(Q\) of \(Y\).
\end{lemma}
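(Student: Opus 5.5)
The plan is to split the cost~\eqref{eq:cost-fg} into its two terms and observe that only the second one depends on \(g\). The first term, \(k^2\mathbb{E}[(f(X_0)-X_0)^2]\), is finite because \(\mathbb{E}[f(X_0)^2]<\infty\) and \(\mathbb{E}[X_0^2]=\sigma^2<\infty\). So \(\inf_g J(f,g)\) equals this constant plus \(\inf_g\mathbb{E}[(Y-g(Z))^2]\), where \(Z=Y+N\). Since \(N\) is independent of \(X_0\), it is also independent of \(Y=f(X_0)\). Hence the joint law of \((Y,Z)\) is determined by \(Q\) alone: it is the law of \((Y,Y+N)\) with \(Y\sim Q\) and \(N\sim\mathcal N(0,1)\) independent.

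For the inner problem I will use the \(L^2\) projection argument. Because \(Y\in L^2\), the conditional mean \(g_f^\star(Z)=\mathbb{E}[Y\mid Z]\) exists, lies in \(L^2\), and is a Borel function of \(Z\) by the Doob--Dynkin lemma. Take any measurable \(g\) with \(\mathbb{E}[g(Z)^2]<\infty\). Expanding \((Y-g(Z))^2=(Y-g^\star_f(Z))^2+(g^\star_f(Z)-g(Z))^2+2(Y-g^\star_f(Z))(g^\star_f(Z)-g(Z))\), the cross term vanishes by the tower property. This gives
\[
\mathbb{E}[(Y-g(Z))^2]=\mathbb{E}[(Y-g_f^\star(Z))^2]+\mathbb{E}[(g_f^\star(Z)-g(Z))^2]\ \ge\ \mathbb{E}[(Y-g_f^\star(Z))^2].
\]

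If \(g(Z)\notin L^2\), then \(\mathbb{E}[(Y-g(Z))^2]=\infty\), since \(Y\in L^2\) and \(\|g(Z)\|_2\le\|Y-g(Z)\|_2+\|Y\|_2\). Such \(g\) therefore cannot do better, and this one-line remark is the only technical point needing care. The infimum is thus attained at \(g_f^\star\), and its value is \(\mathbb{E}[(Y-\mathbb{E}[Y\mid Z])^2]=\mathbb{E}[\operatorname{Var}(Y\mid Z)]\). This equals \(\operatorname{mmse}(Q)\) by definition, since that quantity depends only on the joint law of \((Y,Z)\), hence only on \(Q\). Substituting back gives~\eqref{eq:Jstar-f} and the final claim.

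I expect no genuine obstacle. The only subtleties are the independence of \(N\) from \(Y\), which follows because \(Y\) is a function of \(X_0\), and the handling of estimators \(g\) with \(g(Z)\notin L^2\), both dealt with above.
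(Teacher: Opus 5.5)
Your proposal is correct and follows the paper's proof essentially verbatim: insert and subtract \(\mathbb{E}[Y\mid Z]\), kill the cross term with the tower property, identify the minimum value as \(\mathbb{E}[\operatorname{Var}(Y\mid Z)]=\operatorname{mmse}(Q)\), and note that the joint law of \((Y,Z)\) depends only on \(Q\) because \(N\perp Y\). Your extra remark that estimators with \(g(Z)\notin L^2\) give infinite cost (via the triangle inequality in \(L^2\)) fills a small integrability point that the paper's proof passes over.
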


\begin{proof}
Fix \(f\) and let \(Z=Y+N\). For any measurable \(g\), insert and subtract
the conditional mean \(\mathbb{E}[Y\mid Z]\) and expand the square:
\[
\mathbb{E}\bigl[(Y-g(Z))^2\bigr]
=\mathbb{E}\bigl[(Y-\mathbb{E}[Y\mid Z])^2\bigr]
+2\,\mathbb{E}\bigl[(Y-\mathbb{E}[Y\mid Z])(\mathbb{E}[Y\mid Z]-g(Z))\bigr]
+\mathbb{E}\bigl[(\mathbb{E}[Y\mid Z]-g(Z))^2\bigr].
\]
The cross term vanishes: conditioning on \(Z\) and using the tower
property, \(\mathbb{E}[(Y-\mathbb{E}[Y\mid Z])\,h(Z)]
=\mathbb{E}\bigl[h(Z)\,\mathbb{E}[Y-\mathbb{E}[Y\mid Z]\mid Z]\bigr]=0\) for
every \(h\) (here \(h(Z)=\mathbb{E}[Y\mid Z]-g(Z)\)). Hence
\[
\mathbb{E}\bigl[(Y-g(Z))^2\bigr]
=\underbrace{\mathbb{E}\bigl[(Y-\mathbb{E}[Y\mid Z])^2\bigr]}_{\text{independent of }g}
+\mathbb{E}\bigl[(\mathbb{E}[Y\mid Z]-g(Z))^2\bigr]
\ \ge\ \mathbb{E}\bigl[(Y-\mathbb{E}[Y\mid Z])^2\bigr],
\]
with equality iff \(g(Z)=\mathbb{E}[Y\mid Z]\) a.s.; so the minimizer is
\(g_f^\star(z)=\mathbb{E}[Y\mid Z=z]\). Its value is
\(\mathbb{E}[(Y-\mathbb{E}[Y\mid Z])^2]
=\mathbb{E}\bigl[\mathbb{E}[(Y-\mathbb{E}[Y\mid Z])^2\mid Z]\bigr]
=\mathbb{E}[\operatorname{Var}(Y\mid Z)]=\operatorname{mmse}(Q)\), again by
the tower property. Because \(Z=Y+N\) with \(N\perp Y\), the joint law of
\((Y,Z)\), hence \(\operatorname{mmse}(Q)\), depends on \(f\) only through
\(Q=\mathrm{Law}(Y)\). The control term
\(k^2\mathbb{E}[(f(X_0)-X_0)^2]\) is unaffected by \(g\); adding the two
gives~\eqref{eq:Jstar-f}.
\end{proof}

\subsection{Optimal-Transport Reformulation}

Recall \(P=\mathrm{Law}(X_0)\) and the Wasserstein distance
\eqref{eq:W2-def} from \S\ref{sec:prelim}.

\begin{theorem}[Optimal-transport reformulation of Witsenhausen's problem]
\label{thm:ot-reformulation}
With \(P=\mathrm{Law}(X_0)\), the optimal value of the Witsenhausen
problem equals the variational problem
\begin{equation}
J^\star
=
\inf_{Q\in\mathcal{P}_2(\mathbb{R})}
\biggl\{
k^2 W_2^2(P,Q)
+
\operatorname{mmse}(Q)
\biggr\}.
\label{eq:OT-variational-intro}
\end{equation}
The infimum is attained (Proposition~\ref{prop:existence}), and any
minimizer \(Q^\star\) is realized by a \emph{deterministic} first
controller, namely the monotone rearrangement
\begin{equation}
f^\star(x)=F_{Q^\star}^{-1}\bigl(F_P(x)\bigr),
\qquad Y=f^\star(X_0),
\label{eq:monotone-rearrangement}
\end{equation}
which is the optimal transport map pushing \(P\) to \(Q^\star\)
(\(F_P,F_{Q^\star}\) are the distribution functions of \(P,Q^\star\)).
\end{theorem}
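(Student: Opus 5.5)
The plan is to prove the equality \eqref{eq:OT-variational-intro} by two inequalities, after first using Lemma~\ref{lem:mmse-reduction} to remove the second controller. That lemma gives
\[
J^\star=\inf_f J^\star(f)=\inf_f\bigl\{k^2\mathbb{E}[(f(X_0)-X_0)^2]+\operatorname{mmse}(\mathrm{Law}(f(X_0)))\bigr\},
\]
where the infimum runs over measurable \(f\) with \(\mathbb{E}[f(X_0)^2]<\infty\). Policies with \(\mathbb{E}[f(X_0)^2]=\infty\) can be discarded. For such an \(f\), either \(\mathbb{E}[U_1^2]=\infty\) already, or \(U_1\) has finite second moment and then so does \(f(X_0)=X_0+U_1\), a contradiction. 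In both cases the cost is infinite.

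\emph{Lower bound.} Fix an admissible \(f\) and put \(Q=\mathrm{Law}(f(X_0))\in\mathcal P_2(\mathbb R)\). The joint law of \((X_0,f(X_0))\) is a coupling in \(\Pi(P,Q)\), so by \eqref{eq:W2-def}
\[
\mathbb{E}[(f(X_0)-X_0)^2]\ge W_2^2(P,Q).
\]
Hence \(J^\star(f)\ge k^2W_2^2(P,Q)+\operatorname{mmse}(Q)\), which is at least the right-hand side of \eqref{eq:OT-variational-intro}. Taking the infimum over \(f\) gives \(J^\star\ge\) the variational value.

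\emph{Upper bound.} Fix any \(Q\in\mathcal P_2(\mathbb R)\). Since \(P=\mathcal N(0,\sigma^2)\) is absolutely continuous, Fact~\ref{fact:1d-ot} supplies the monotone map \(T=F_Q^{-1}\circ F_P\) with \(T_\#P=Q\) and \(\mathbb{E}[(T(X_0)-X_0)^2]=W_2^2(P,Q)\). A few measurability and moment points need checking, and I expect this to be the only delicate step:
\begin{itemize}
\item \(T\) is Borel, because \(F_Q^{-1}\) is monotone and hence Borel, and \(F_P\) is continuous.
\item \(F_P(X_0)\) is uniform on \((0,1)\) and almost surely avoids the endpoints, so \(T\) is finite almost surely. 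The generalized inverse is defined on \((0,1)\), and the null set where \(F_P\in\{0,1\}\) does not matter.
\item \(\mathbb{E}[T(X_0)^2]=\int y^2\,dQ<\infty\), so \(T\) is admissible.
\end{itemize}
With \(f=T\), meaning \(\gamma_1(x)=T(x)-x\), Lemma~\ref{lem:mmse-reduction} gives \(J^\star\le J^\star(T)=k^2W_2^2(P,Q)+\operatorname{mmse}(Q)\). Taking the infimum over \(Q\) gives the reverse inequality, so the two values are equal.

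\emph{Attainment and the form of the optimal controller.} Proposition~\ref{prop:existence} provides a minimizer \(Q^\star\); I will take this as given. Applying the upper-bound construction with \(Q=Q^\star\) shows that \(f^\star=F_{Q^\star}^{-1}\circ F_P\) achieves \(J^\star(f^\star)=k^2W_2^2(P,Q^\star)+\operatorname{mmse}(Q^\star)=J^\star\). The pair \((f^\star,g^\star_{f^\star})\) is therefore optimal for the original problem, with \(g^\star_{f^\star}(z)=\mathbb{E}[Y\mid Y+N=z]\). The controller is deterministic, and Fact~\ref{fact:1d-ot} identifies it as the optimal transport map from \(P\) to \(Q^\star\).

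Conversely, the lower-bound chain shows that every optimal \(f\) has a law \(Q\) that minimizes the variational problem and a coupling that is optimal. By the uniqueness of the optimal plan for absolutely continuous \(P\) in one dimension, that coupling is induced by the monotone map. This converse is optional, and I would state it as a remark rather than as part of the theorem.
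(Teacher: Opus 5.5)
Your proof is correct and follows the paper's argument. You remove the second controller with Lemma~\ref{lem:mmse-reduction}, obtain the lower bound because the law of $(X_0,f(X_0))$ is a coupling in $\Pi(P,Q)$ (the paper phrases this as relaxing $f$ to an arbitrary coupling), and obtain the upper bound and the form of $f^\star$ from the monotone map of Fact~\ref{fact:1d-ot}. One refinement is worth keeping: you apply the upper-bound construction to every $Q$ rather than only to a minimizer, so the value identity no longer depends on Proposition~\ref{prop:existence}, which is then needed only for attainment.
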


\begin{proof}
By Lemma~\ref{lem:mmse-reduction}, minimizing \(J\) over policies is
equivalent to minimizing
\(k^2\,\mathbb{E}[(X_0-Y)^2]+\operatorname{mmse}(\mathrm{Law}(Y))\) over
admissible first controllers, where \(Y=f(X_0)\).

\emph{Lower bound (relaxation).} Relax the (deterministic) first
controller to an arbitrary coupling \(\pi\) of \(X_0\) and \(Y\); this
enlarges the feasible set, so it can only decrease the infimum. The
estimation term depends only on \(Q=\mathrm{Law}(Y)\), so we may minimize
in two stages:
\[
\inf_{\pi}\Bigl(k^2\mathbb{E}_\pi[(X_0-Y)^2]+\operatorname{mmse}(Q)\Bigr)
=\inf_{Q}\Bigl(k^2\inf_{\pi\in\Pi(P,Q)}\mathbb{E}_\pi[(X_0-Y)^2]
+\operatorname{mmse}(Q)\Bigr),
\]
and by \eqref{eq:W2-def} the inner infimum is
\(W_2^2(P,Q)\). Hence
\(J^\star\ge\inf_Q\{k^2W_2^2(P,Q)+\operatorname{mmse}(Q)\}\).

\emph{Matching upper bound (deterministic attainment).} Let \(Q^\star\)
attain the right-hand infimum (Proposition~\ref{prop:existence}). By
Fact~\ref{fact:1d-ot}, the \(W_2\)-optimal coupling of \(P\) and
\(Q^\star\) is induced by the deterministic monotone map
\(f^\star=F_{Q^\star}^{-1}\circ F_P\), which pushes \(P\) to \(Q^\star\)
and satisfies \(\mathbb{E}[(X_0-f^\star(X_0))^2]=W_2^2(P,Q^\star)\). As
\(f^\star\) is an admissible first controller,
\(J^\star\le k^2W_2^2(P,Q^\star)+\operatorname{mmse}(Q^\star)\), which
equals the right-hand infimum. The two bounds coincide, proving
\eqref{eq:OT-variational-intro} and~\eqref{eq:monotone-rearrangement}.
\end{proof}

Thus the first controller chooses a distribution \(Q\) that balances a
\emph{transport cost} \(k^2 W_2^2(P,Q)\) against an \emph{estimation
cost} \(\operatorname{mmse}(Q)\), and \(Y=f^\star(X_0)\) is the optimal
representation signalled to the second controller
(Figure~\ref{fig:map}). The rest of the paper studies the variational
problem~\eqref{eq:OT-variational-intro}.

\begin{figure}[t]
\centering
\begin{tikzpicture}
\begin{axis}[width=0.62\linewidth,height=5.2cm,
  axis lines=middle,xlabel={\(x\)},ylabel={\(f^\star(x)\)},
  xlabel style={below right},ylabel style={above left},
  xmin=-6,xmax=6,ymin=-6,ymax=6,xtick=\empty,ytick=\empty,
  legend style={at={(0.03,0.97)},anchor=north west,font=\small,draw=none,fill=none}]
\addplot[very thick,domain=-6:6,samples=200]{x+0.9*sin(deg(x))};
\addlegendentry{\(f^\star\) (schematic)}
\addplot[dashed,gray,domain=-6:6,samples=2]{x};
\addlegendentry{identity (\(U_1\equiv0\))}
\end{axis}
\end{tikzpicture}
\caption{The optimal first controller
\(f^\star=F_{Q^\star}^{-1}\circ F_P\) is a strictly increasing, generally
nonlinear rearrangement (Theorem~\ref{thm:ot-reformulation}). For small
\(k\) it develops a staircase-like profile that spreads probability mass
into well-separated levels, making \(Y\) easier to estimate from
\(Z=Y+N\); the curve shown is a schematic, not the exact optimizer.}
\label{fig:map}
\end{figure}

\section{Related Work}
\label{sec:related}

\paragraph{The counterexample and decentralized control.}
Witsenhausen~\cite{witsenhausen1968counterexample} exhibited a nonlinear
policy strictly better than every affine one, overturning the expectation
that Gaussian primitives and quadratic costs force linear optimality; the
example became foundational for team decision theory and information
structures~\cite{ho1980team,yuksel2024stochastic}. The role of the
information pattern was clarified by Bansal and
Ba\c{s}ar~\cite{bansal1987stochastic}, who characterized when affine laws
are optimal; Ba\c{s}ar~\cite{basar2008variations} later placed the
counterexample within a broad family of nonclassical LQG teams and zero-sum
games, delineating when optimal (or saddle-point) policies are linear versus
nonlinear, and the monograph of Y\"uksel and
Ba\c{s}ar~\cite{yuksel2013stochastic} develops the decentralized
stochastic-control and networked-systems theory in which such
information-constrained problems sit. Complexity results, meanwhile,
established that decentralized LQG design is in general
intractable~\cite{papadimitriou1986intractable}. This is the backdrop for
the nonconvexity we confront in \S\ref{sec:finite-program-sec}.

\paragraph{Computing the counterexample.}
As no closed form is available, much effort has gone into numerical and
approximate solutions: approximating (neural) networks for the two
policies~\cite{baglietto2001numerical}, hierarchical and global search over
the nonconvex landscape~\cite{lee2001witsenhausen}, and
information-theoretic constructions with provable approximation guarantees,
especially in the vector case~\cite{grover2013approximately}. These works
produce increasingly good controllers; our aim is complementary, to expose
the \emph{structure} of the optimizer (absolute continuity,
Euler-Lagrange conditions, a quantization form) that explains why good
controllers take the shape they do, and to organize the computation around
it.

\paragraph{Optimal transport and MMSE.}
The transport viewpoint we adopt is due to Wu and
Verd\'u~\cite{wu2011witsenhausen}, who recast the first controller as a
transport map; it rests on functional properties of the
MMSE~\cite{wu2012functional} and the I-MMSE relation between estimation and
information~\cite{guo2005mutual}. We use standard one-dimensional
optimal-transport machinery, monotone rearrangement and Wasserstein
gradients~\cite{villani2009optimal,santambrogio2015optimal}. Relative
to~\cite{wu2011witsenhausen}, we go beyond the reformulation itself to a
full variational characterization of \(Q^\star\), its Fisher-information
form, and a finite-dimensional quantization program with a matching solver.

\paragraph{Quantization.}
Our finite-level program is, structurally, an optimal quantizer with an
MMSE regularizer. When control is expensive it reduces to the classical
scalar quantizer of Lloyd and Max~\cite{lloyd1982least}, whose theory and
vector extensions are surveyed in~\cite{gray1998quantization} and treated
measure-theoretically in~\cite{graf2000foundations}. The solver we propose
combines the Linde-Buzo-Gray splitting
heuristic~\cite{linde1980algorithm} with deterministic
annealing~\cite{rose1998deterministic}, using the control penalty \(k\) as
an inverse temperature; the Voronoi/centroid stationarity conditions of
\S\ref{sec:finite-program-sec} are the estimation-regularized analogues of
the Lloyd-Max conditions.

\section{Characterization of the Optimal Distribution \(Q^\star\)}
\label{sec:characterization}

We consider the variational problem
\begin{equation}
J^\star
=
\inf_{Q\in\mathcal{P}_2(\mathbb{R})}
\Bigl\{
k^2 W_2^2(P,Q)
+
\operatorname{mmse}(Q)
\Bigr\},
\label{eq:OT-variational}
\end{equation}
where \(\mathcal{P}_2(\mathbb{R})\) denotes the set of Borel probability
measures on \(\mathbb{R}\) with finite second moment, \(P\) is the law of
the initial state \(X_0\), and
\[
\operatorname{mmse}(Q)
=
\mathbb{E}\!\left[\operatorname{Var}(Y\mid Y+N)\right],
\qquad
Y\sim Q,\quad N\sim\mathcal{N}(0,1)\ \text{independent}.
\]

Throughout we use the following elementary bounds on the estimation
term, which also make the objective coercive.

\begin{lemma}[Bounds on the MMSE functional]
\label{lem:mmse-bounds}
Let \(Y\sim Q\in\mathcal{P}_2(\mathbb{R})\) with variance
\(v:=\operatorname{Var}(Y)\), and \(N\sim\mathcal{N}(0,1)\) independent.
Then
\[
0\ \le\ \operatorname{mmse}(Q)\ \le\ \frac{v}{1+v}\ <\ 1 .
\]
In particular \(\operatorname{mmse}\) is uniformly bounded by \(1\) on all
of \(\mathcal{P}_2(\mathbb{R})\).
\end{lemma}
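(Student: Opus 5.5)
The plan is to compare the MMSE with the error of the best \emph{linear} (affine) estimator of \(Y\) from \(Z=Y+N\). Since the conditional mean minimizes mean-square error over all measurable functions of \(Z\), it does at least as well as any affine estimator. The best affine error depends only on the first two moments of \(Y\), and it equals \(v/(1+v)\). The lower bound \(\operatorname{mmse}(Q)\ge 0\) is immediate, because \(\operatorname{mmse}(Q)=\mathbb{E}[\operatorname{Var}(Y\mid Z)]\) is the expectation of a nonnegative quantity.

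For the upper bound, write \(m=\mathbb{E}[Y]\). By the orthogonality argument already used in Lemma~\ref{lem:mmse-reduction}, for every measurable \(g\) with \(\mathbb{E}[g(Z)^2]<\infty\),
\[
\operatorname{mmse}(Q)=\mathbb{E}\bigl[(Y-\mathbb{E}[Y\mid Z])^2\bigr]\le\mathbb{E}\bigl[(Y-g(Z))^2\bigr].
\]
I will apply this with the affine estimator \(g(z)=m+a(z-m)\), where \(a\in\mathbb{R}\) is to be chosen. Using \(Y-g(Z)=(1-a)(Y-m)-aN\) and the independence of \(N\) from \(Y\), with \(\mathbb{E}N=0\) and \(\mathbb{E}N^2=1\), the error is \((1-a)^2v+a^2\). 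Minimizing over \(a\) gives \(a=v/(1+v)\), and the minimal value is
\[
\frac{v}{(1+v)^2}+\frac{v^2}{(1+v)^2}=\frac{v}{1+v}.
\]
Hence \(\operatorname{mmse}(Q)\le v/(1+v)\). Finite second moment of \(Q\) guarantees that \(v<\infty\) and that every expectation above is finite.

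Finally, \(v/(1+v)<1\) because \(v\) is finite, and this ratio is at most \(1\) uniformly over \(\mathcal P_2(\mathbb R)\), which gives the final assertion. The degenerate case \(v=0\) needs no separate treatment: there \(Y\) is a.s.\ constant and the bound reads \(0\le 0\). I do not expect any real obstacle. The only point needing care is justifying the comparison with an affine estimator, and that is precisely the orthogonality decomposition from Lemma~\ref{lem:mmse-reduction}, applied to an admissible \(g\) with \(g(Z)\in L^2\).
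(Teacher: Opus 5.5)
Your proposal is correct and follows essentially the same route as the paper: bound the MMSE by the error of the best affine estimator of \(Y\) from \(Z\), which equals \(v/(1+v)\). The only cosmetic difference is that you minimize \((1-a)^2v+a^2\) over the slope explicitly, whereas the paper centers \(Y\) and quotes the linear-MMSE formula \(v-\operatorname{Cov}(Y,Z)^2/\operatorname{Var}(Z)\).
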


\begin{proof}
Nonnegativity is immediate since \(\operatorname{mmse}(Q)=\mathbb{E}
[\operatorname{Var}(Y\mid Y+N)]\ge 0\). For the upper bound, the MMSE is
the error of the \emph{optimal} estimator of \(Y\) from \(Z=Y+N\), hence
it is no larger than the error of the best \emph{linear} estimator. As
\(\operatorname{Var}(Y\mid Z)\) is invariant under deterministic shifts
of \(Y\), we may assume \(\mathbb{E}[Y]=0\); the linear MMSE is then
\[
\operatorname{mmse}(Q)\ \le\
\operatorname{Var}(Y)-\frac{\operatorname{Cov}(Y,Z)^2}{\operatorname{Var}(Z)}
=
v-\frac{v^2}{v+1}
=
\frac{v(v+1)-v^2}{v+1}
=
\frac{v}{1+v}.
\]
Here we used, since \(Z=Y+N\) with \(N\) independent of \(Y\) and
\(\operatorname{Var}(N)=1\),
\[
\operatorname{Cov}(Y,Z)=\operatorname{Cov}(Y,Y)+\operatorname{Cov}(Y,N)
=\operatorname{Var}(Y)=v,
\qquad
\operatorname{Var}(Z)=\operatorname{Var}(Y)+\operatorname{Var}(N)=v+1.
\]
Since \(v<\infty\), the bound \(v/(1+v)<1\).
\end{proof}

\subsection{Existence of a Minimizer}

We first state a basic existence result.

\begin{proposition}[Existence]
\label{prop:existence}
Assume that \(P\in\mathcal{P}_2(\mathbb{R})\). Then the functional
\[
J(Q)
:=
k^2 W_2^2(P,Q)
+
\operatorname{mmse}(Q),
\qquad Q\in\mathcal{P}_2(\mathbb{R}),
\]
admits at least one minimizer \(Q^\star\in\mathcal{P}_2(\mathbb{R})\).
\end{proposition}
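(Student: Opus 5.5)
We argue by the direct method of the calculus of variations. Since \(J(P)=\operatorname{mmse}(P)<1\) by Lemma~\ref{lem:mmse-bounds}, we have \(J^\star:=\inf J\in[0,1)\). Take a minimizing sequence \(\nu_j\in\mathcal P_2(\mathbb R)\) with \(J(\nu_j)\downarrow J^\star\), and assume \(J(\nu_j)\le 1\) for all \(j\).

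\emph{Step 1 (bounded moments and tightness).} Because \(\operatorname{mmse}\ge0\), we get \(k^2W_2^2(P,\nu_j)\le 1\), so \(W_2(P,\nu_j)\le 1/k\). The triangle inequality for \(W_2\) (against \(\delta_0\)) then gives
\[
\Bigl(\int y^2\,d\nu_j\Bigr)^{1/2}\le \Bigl(\int x^2\,dP\Bigr)^{1/2}+\frac1k .
\]
So the second moments are uniformly bounded. By Chebyshev's inequality the family \(\{\nu_j\}\) is tight, and Prokhorov's theorem yields a subsequence with \(\nu_j\rightharpoonup Q^\star\) weakly. Fatou's lemma, applied along a Skorokhod representation (or via lower semicontinuity of \(\int y^2\,d\nu\) under weak convergence, since \(y^2\) is nonnegative and continuous), shows \(\int y^2\,dQ^\star<\infty\). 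Hence \(Q^\star\in\mathcal P_2(\mathbb R)\).

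\emph{Step 2 (lower semicontinuity).} The estimation term is weakly continuous by Fact~\ref{fact:mmse}, so \(\operatorname{mmse}(\nu_j)\to\operatorname{mmse}(Q^\star)\). For the transport term, take optimal couplings \(\pi_j\in\Pi(P,\nu_j)\). Their marginals form tight families, so \(\{\pi_j\}\) is tight, and a further subsequence converges weakly to some \(\pi\in\Pi(P,Q^\star)\), because marginals pass to the limit. Since \((x-y)^2\) is continuous and bounded below, the portmanteau theorem (via truncation \(\min\{(x-y)^2,M\}\) and \(M\to\infty\)) gives
\[
W_2^2(P,Q^\star)\le\int (x-y)^2\,d\pi\le\liminf_j\int (x-y)^2\,d\pi_j=\liminf_j W_2^2(P,\nu_j).
\]
Combining the two terms, \(J(Q^\star)\le\liminf_j J(\nu_j)=J^\star\). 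Therefore \(Q^\star\) is a minimizer.

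\emph{Main obstacle.} The delicate point is that weak convergence alone does not control second moments, so we must check that the limit lies in \(\mathcal P_2\) and that \(W_2^2\) is lower semicontinuous rather than continuous. Both are handled above: the first by the moment bound from coercivity together with lower semicontinuity of \(\int y^2\), the second by the stability of couplings.

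An alternative for Step 2, since \(P\) is absolutely continuous, uses Fact~\ref{fact:1d-ot}. Weak convergence gives \(F_{\nu_j}^{-1}\to F_{Q^\star}^{-1}\) a.e. on \((0,1)\), and Fatou's lemma applied to \(\int_0^1(F_P^{-1}-F_{\nu_j}^{-1})^2\,dt\) gives the same inequality.
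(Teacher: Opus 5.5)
Your proof is correct and follows the paper's own argument. Both use the direct method: the coercivity bound $k^2W_2^2(P,\nu_j)\le C$ and the triangle inequality for second moments, then tightness and Prokhorov, weak lower semicontinuity of $W_2^2(P,\cdot)$, and weak continuity of $\operatorname{mmse}$ from Fact~\ref{fact:mmse}. You additionally prove two points the paper only asserts, namely that the weak limit lies in $\mathcal P_2(\mathbb R)$ and that $W_2^2(P,\cdot)$ is lower semicontinuous (via stability of optimal couplings or the quantile formula), which makes the argument more self-contained.
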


\begin{proof}
Direct method. As \(J\ge0\), \(J^\star:=\inf_Q J(Q)\in[0,J(P)]\) is
finite. Along a minimizing sequence \((\nu_j)\subset\mathcal{P}_2(\mathbb{R})\),
the bound \(\operatorname{mmse}\ge0\) gives
\(k^2W_2^2(P,\nu_j)\le J(\nu_j)\le C\); since
\(\bigl(\int y^2\,d\nu_j\bigr)^{1/2}\le\bigl(\int x^2\,dP\bigr)^{1/2}
+W_2(P,\nu_j)\), the second moments are uniformly bounded, so \((\nu_j)\)
is tight (Markov) and, by Prokhorov, \(\nu_j\rightharpoonup Q^\star\in
\mathcal{P}_2(\mathbb{R})\) along a subsequence. Now \(W_2^2(P,\cdot)\) is
weakly lower semicontinuous, and by Fact~\ref{fact:mmse}
\(\operatorname{mmse}\) is weakly continuous on this
second-moment-bounded sequence; hence \(J(Q^\star)\le\liminf_j
J(\nu_j)=J^\star\), and \(Q^\star\) is a minimizer.
\end{proof}

\subsection{Absolute Continuity of the Optimizer}

For the remainder we record the mild symmetry and regularity properties
of the prior.

\begin{assumption}
\label{as:symmetric-P}
The distribution \(P\) is symmetric about zero, has mean zero, and admits
a density \(p\) that is strictly positive and smooth, as holds for
\(P=\mathcal{N}(0,\sigma^2)\).
\end{assumption}

By Fact~\ref{fact:regularity} the optimizer \(Q^\star\) is then symmetric
about zero. The structural property we shall need in the sequel is that
\(Q^\star\) cannot be purely discrete.

\begin{proposition}[No atoms; absolute continuity]
\label{prop:ac}
Suppose \(k>0\) and \(P\) is absolutely continuous with a strictly
positive density and finite variance (as in
Assumption~\ref{as:symmetric-P}). Then:
\begin{enumerate}
\item[(i)] no minimizer \(Q^\star\) of \eqref{eq:OT-variational} has an
  atom; and
\item[(ii)] in fact every minimizer is absolutely continuous with respect
  to Lebesgue measure, with a density that is strictly positive on its
  (interval) support.
\end{enumerate}
\end{proposition}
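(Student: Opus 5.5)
Part (i) will be proved by a direct perturbation argument that uses only Facts~\ref{fact:1d-ot}--\ref{fact:mmse}. Part (ii) will then follow by combining (i) with Fact~\ref{fact:regularity}.

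For (i), suppose a minimizer \(Q^\star\) has an atom of mass \(m>0\) at a point \(a\). By Fact~\ref{fact:1d-ot}, the optimal map \(f^\star=F_{Q^\star}^{-1}\circ F_P\) is constant equal to \(a\) on an interval \(I=(x_-,x_+)\) with \(P(I)=m\). Because \(p>0\), this interval is nondegenerate. I will build a competitor \(Q_\varepsilon\) that "unpinches" the atom. Replace \(f^\star\) on \(I\) by
\[
f_\varepsilon(x)=a+\varepsilon\,(x-\bar x_I), \qquad \bar x_I=\mathbb{E}[X_0\mid X_0\in I],
\]
and leave it unchanged elsewhere. This map stays monotone for small \(\varepsilon\), since the gap to neighbouring values of \(f^\star\) can be handled by clipping. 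It will be cleaner still to spread the atom only onto a small interval of length \(\varepsilon\) inside the existing gap of \(F_{Q^\star}^{-1}\), or symmetrically around \(a\).

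The transport cost then changes by
\[
\mathbb{E}\bigl[(f_\varepsilon-X_0)^2-(a-X_0)^2;\,I\bigr]
=-2\varepsilon\,\mathbb{E}\bigl[(X_0-a)(X_0-\bar x_I);\,I\bigr]+O(\varepsilon^2)
=-2\varepsilon\,m\,\operatorname{Var}(X_0\mid I)+O(\varepsilon^2).
\]
This is strictly negative at first order, because \(X_0\) is nondegenerate on \(I\). So spreading the atom decreases \(W_2^2(P,\cdot)\) linearly in \(\varepsilon\), at least as an upper bound through this particular coupling.

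It remains to show that the MMSE term increases by only \(o(\varepsilon)\). Write \(Q_\varepsilon=Q^\star+(\mu_\varepsilon-m\delta_a)\), where \(\mu_\varepsilon\) has mass \(m\) and is supported within \(O(\varepsilon)\) of \(a\). I expect the estimate
\[
\operatorname{mmse}(Q_\varepsilon)-\operatorname{mmse}(Q^\star)=O(\varepsilon^2)
\]
to hold. The reason is that the first variation of mmse at \(Q^\star\) is a smooth function \(\psi(y)\): by the Gaussian-channel formulas it is the pointwise estimation error \(\mathbb{E}[(y-\hat y(y+N))^2]\) minus a constant, and it is real-analytic in \(y\). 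Since \(\mu_\varepsilon\) has barycenter \(a\), the first-order term \(\int\psi\,d(\mu_\varepsilon-m\delta_a)\) vanishes up to \(O(\varepsilon^2)\). For the remainder I will use concavity (Fact~\ref{fact:mmse}), which gives
\[
\operatorname{mmse}(Q_\varepsilon)\le \operatorname{mmse}(Q^\star)+\int\psi\,d(Q_\varepsilon-Q^\star).
\]
So concavity alone supplies the one-sided bound needed, and no remainder estimate is required beyond the Taylor expansion of \(\psi\) about \(a\). Combining the two pieces gives
\[
J(Q_\varepsilon)\le J(Q^\star)-2k^2\varepsilon\, m\operatorname{Var}(X_0\mid I)+O(\varepsilon^2)<J(Q^\star)
\]
for small \(\varepsilon\), which contradicts minimality.

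The step I expect to be the main obstacle is justifying the supergradient representation \(\psi\) and its smoothness in \(y\). I would derive it by differentiating \(\operatorname{mmse}((1-t)Q^\star+t\nu)\) at \(t=0^+\), writing mmse as
\[
\mathbb{E}[Y^2]-\int \frac{\bigl(\int y\,\phi(z-y)\,dQ(y)\bigr)^2}{\int\phi(z-y)\,dQ(y)}\,dz .
\]
The Gaussian kernel makes every \(y\)-derivative integrable.

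For (ii), Fact~\ref{fact:regularity} states that the optimal first controller is strictly increasing with a real-analytic left inverse \(h\). Hence \(Q^\star=P\circ h\), which has density \(p(h(y))\,h'(y)\) on the interval \(f^\star(\mathbb{R})\). Strict positivity of this density amounts to \(h'>0\), which holds except at isolated zeros since \(h\) is analytic and nonconstant. To handle those zeros, I will argue that a zero of \(h'\) would make \(f^\star\) jump, creating a gap in the support, whereas the support is an interval. I will also note that (i) is consistent with and partly implies (ii), because an atom corresponds to a flat piece of \(f^\star\).
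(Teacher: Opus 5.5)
\textbf{Part (i).} This follows the paper's proof. You spread the atom, use an explicit (not necessarily optimal) coupling to get a first-order transport gain, and bound the MMSE increase by $O(\varepsilon^2)$. That bound comes from the tangent-line inequality for the concave $\operatorname{mmse}$ (the paper phrases it as convexity of $\Phi$; it is the same bound, since $\psi=y^2-\Psi^\star$), plus a Taylor expansion of $\psi(y)=\mathbb E[(y-\hat y(y+N))^2]$ about the barycenter $a$.

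Your competitor, a linear contraction of $P_I$ toward $a$, gives the first-order coefficient $\operatorname{Var}(X_0\mid I)$ (times the atom's mass). This is cleaner than the paper's $\kappa$.

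Two small fixes are needed:
\begin{itemize}
\item Drop the monotonicity/clipping remark. Any coupling upper-bounds $W_2^2$, so $f_\varepsilon$ need not be monotone. In fact it is not monotone when $Q^\star$ has mass accumulating at $a$ from both sides. Clipping would in general move the barycenter of the spread off $a$, or undo the spread entirely, and your $O(\varepsilon^2)$ bound depends on that barycenter.
\item The claim that $\mu_\varepsilon$ lies within $O(\varepsilon)$ of $a$ fails when $I$ is a half-line, i.e.\ an atom at an endpoint of the support. Then the Taylor remainder needs growth control on $\psi''$ and higher moments of $P_I$ than the finite variance assumed. A bounded spread removes this: the paper's uniform cluster works, as does $a+\varepsilon\bigl(G(x)-\mathbb E[G(X_0)\mid I]\bigr)$ with $G$ bounded and increasing.
\end{itemize}

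\textbf{Part (ii): the genuine error.} You claim that a zero of $h'$ would make $f^\star$ jump. This is false. If $h$ is strictly increasing and real-analytic with $h'(y_0)=0$ at an isolated point, then $f^\star=h^{-1}$ is continuous with a vertical tangent at $h(y_0)$, as with $h(y)=y^3$. A jump of $f^\star$ corresponds instead to $h$ being constant on an interval, which analyticity already excludes.

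So interval support yields no contradiction, and your argument does not rule out isolated zeros of $q^\star=p(h)\,h'$. What the density formula does give, before your faulty step, is positivity Lebesgue-a.e.\ on the support, since the zeros of the nonconstant analytic $h'$ are isolated. That is the natural reading for a density defined up to null sets. It is also all that the paper's own proof delivers, since it simply invokes Fact~\ref{fact:regularity}. Pointwise positivity of the smooth representative would need a separate argument, not the jump argument.
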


\begin{proof}
Part~(ii) is Fact~\ref{fact:regularity} (strict monotonicity of the
optimal map with a real-analytic left inverse); we give a self-contained
proof of the weaker statement~(i), which already rules out the discrete
solutions one might naively expect.

The mechanism is that the transport cost penalizes atoms at \emph{first}
order in the amount of spreading, whereas the estimation cost responds
only at \emph{second} order. We make this precise.

Suppose, for contradiction, that a minimizer \(Q^\star\) has an atom of
mass \(\alpha:=Q^\star(\{y_0\})>0\). Let \(T^\star\) be the monotone
optimal transport map with \(T^\star_\#P=Q^\star\) (nondecreasing, since
\(P\) is absolutely continuous). The level set
\(I:=(T^\star)^{-1}(\{y_0\})\) is an interval with \(P(I)=\alpha>0\);
because \(P\) has a strictly positive density, \(I\) has positive length,
so the conditional law \(P_I:=P(\cdot\mid I)\) is non-degenerate, with
mean \(\mu_I\) and variance \(\sigma_I^2>0\) (both finite since \(P\) has
finite variance, even when \(I\) is a half-line).

\emph{Competitor.} Fix \(U\sim\mathrm{Unif}[-\sqrt3,\sqrt3]\) (mean \(0\),
variance \(1\), bounded), and for \(s\in(0,1]\) let
\(\mu_s:=\mathrm{Law}(y_0+sU)\). Define
\[
Q_s:=Q^\star-\alpha\,\delta_{y_0}+\alpha\,\mu_s\ \in\ \mathcal{P}_2(\mathbb{R}),
\]
which replaces the atom by a cluster of width \(O(s)\) centered at
\(y_0\) and leaves the rest of \(Q^\star\) untouched.

\emph{Transport estimate (first order).} Couple \(P\) and \(Q_s\) by the
plan equal to \(T^\star\) off \(I\) and, on \(I\), given by the
comonotone coupling of \(P_I\) with \(\mu_s\). Off \(I\) the cost equals
the corresponding part of \(W_2^2(P,Q^\star)\); on \(I\) it changes from
\(\int_I(x-y_0)^2\,P(dx)=\alpha\bigl(\sigma_I^2+(\mu_I-y_0)^2\bigr)\) to
\(\alpha\,W_2^2(P_I,\mu_s)\). By the one-dimensional formula
\(W_2^2(\mu,\nu)=\int_0^1(F_\mu^{-1}-F_\nu^{-1})^2\,dt\) and
\(F_{\mu_s}^{-1}(t)=y_0+s\,F_U^{-1}(t)\) (since \(\mu_s=\mathrm{Law}(y_0+sU)\)),
\[
W_2^2(P_I,\mu_s)
=\int_0^1\!\Bigl(F_{P_I}^{-1}(t)-y_0-s\,F_U^{-1}(t)\Bigr)^2 dt
=\int_0^1\!\Bigl[\bigl(\mu_I-y_0\bigr)+\bigl(F_{P_I}^{-1}(t)-\mu_I\bigr)
-s\,F_U^{-1}(t)\Bigr]^2 dt.
\]
Expanding the square and integrating term by term, the three centered
pieces are orthogonal to the constant \(\mu_I-y_0\) because
\(\int_0^1(F_{P_I}^{-1}-\mu_I)\,dt=0\) and \(\int_0^1 F_U^{-1}\,dt
=\mathbb E[U]=0\); using \(\int_0^1(F_{P_I}^{-1}-\mu_I)^2\,dt=\sigma_I^2\)
and \(\int_0^1(F_U^{-1})^2\,dt=\operatorname{Var}(U)=1\),
\[
W_2^2(P_I,\mu_s)
=(\mu_I-y_0)^2+\sigma_I^2-2\kappa s+s^2,
\qquad
\kappa:=\int_0^1 \bigl(F_{P_I}^{-1}(t)-\mu_I\bigr)F_U^{-1}(t)\,dt>0,
\]
where \(\kappa>0\) because both quantile functions are (strictly)
increasing, so their comonotone rearrangements are positively correlated
(equivalently, \(\kappa=\operatorname{Cov}\bigl(F_{P_I}^{-1}(T),F_U^{-1}(T)\bigr)\)
for \(T\sim\mathrm{Unif}[0,1]\), a covariance of two increasing functions of
\(T\), hence positive).
Since this coupling only upper-bounds \(W_2^2(P,Q_s)\),
\[
W_2^2(P,Q_s)-W_2^2(P,Q^\star)\ \le\ \alpha\,(s^2-2\kappa s).
\tag{$\ast$}
\]

\emph{Estimation estimate (second order).} Recall
\(\operatorname{mmse}(Q)=\mathbb{E}_Q[Y^2]-\Phi(Q)\) with
\(\Phi(Q)=\mathbb{E}\bigl[(\mathbb{E}[Y\mid Z])^2\bigr]=\int
g_Q(z)^2/p_Z^Q(z)\,dz\). Since \((g,p)\mapsto g^2/p\) is convex and
\(g_Q,p_Z^Q\) are linear in \(Q\), the functional \(\Phi\) is convex on
\(\mathcal{P}_2(\mathbb{R})\). Its first variation at \(Q^\star\) is
represented by \(\Psi^\star(y):=2y\,a^\star(y)-b^\star(y)\); this
representation is valid not only for density perturbations but for any
finite, compactly supported signed perturbation of mass zero such as
\(h=\alpha(\mu_s-\delta_{y_0})\), because along \(Q_t=Q^\star+th\) both
\(g_{Q_t}\) and \(p_Z^{Q_t}\) are affine in \(t\) with
\(p_Z^{Q^\star}=Q^\star*\phi>0\), so
\(\tfrac{d}{dt}\big|_0\Phi(Q_t)=\int\Psi^\star\,dh\)
(cf.~\eqref{eq:delta-mmse}). Moreover \(a^\star,b^\star\) are Gaussian
convolutions of \(m^\star\) and \((m^\star)^2\); since
\(m^\star=\mathbb{E}[Y\mid Z=\cdot]\) has at most linear growth (as
\(Q^\star\in\mathcal{P}_2\)) and \(p_Z^{Q^\star}\) is smooth and strictly
positive \emph{even when \(Q^\star\) has an atom}, differentiation under
the integral is justified and \(\Psi^\star\in C^\infty\) with locally
bounded derivatives. Convexity gives the tangent-line bound
\[
\Phi(Q_s)-\Phi(Q^\star)
\ \ge\
\int \Psi^\star\,d(Q_s-Q^\star)
=\alpha\Bigl(\int \Psi^\star\,d\mu_s-\Psi^\star(y_0)\Bigr).
\]
As \(\mu_s\) has mean \(y_0\), variance \(s^2\), and bounded support,
Taylor's theorem yields
\(\int \Psi^\star\,d\mu_s-\Psi^\star(y_0)=\tfrac12(\Psi^\star)''(y_0)s^2+o(s^2)\),
hence \(\Phi(Q_s)-\Phi(Q^\star)\ge -C_1\alpha s^2\) for some
\(C_1<\infty\) and all small \(s\). Since
\(\mathbb{E}_{Q_s}[Y^2]-\mathbb{E}_{Q^\star}[Y^2]=\alpha s^2\),
\[
\operatorname{mmse}(Q_s)-\operatorname{mmse}(Q^\star)
=\alpha s^2-\bigl(\Phi(Q_s)-\Phi(Q^\star)\bigr)
\ \le\ (1+C_1)\,\alpha s^2.
\tag{$\ast\ast$}
\]

\emph{Conclusion.} Adding \(k^2(\ast)\) and \((\ast\ast)\),
\[
J(Q_s)-J(Q^\star)
\ \le\
-2k^2\alpha\kappa\, s+\bigl(k^2+1+C_1\bigr)\alpha s^2
\ =\ -2k^2\alpha\kappa\,s+O(s^2).
\]
As \(\kappa>0\), the right-hand side is strictly negative for all
sufficiently small \(s>0\), contradicting the optimality of \(Q^\star\).
Hence no minimizer has an atom.
\end{proof}

\begin{remark}
The hypothesis \(k>0\) is essential: the entire contradiction rests on the
first-order transport gain \(-2k^2\alpha\kappa\,s\). At \(k=0\) the
objective reduces to \(J=\operatorname{mmse}\), which is minimized by the
atom \(Q^\star=\delta_0\) (with \(\operatorname{mmse}(\delta_0)=0\)); so
the conclusion genuinely fails without a strictly positive transport
penalty.
\end{remark}

Thus, in the Witsenhausen setting (e.g.\ Gaussian \(P\)),
Proposition~\ref{prop:ac} guarantees that \(Q^\star\) has no atoms, and by
Fact~\ref{fact:regularity} it has a smooth density \(q^\star\).
Correspondingly, the optimal first-stage controller (the optimal
transport map from \(P\) to \(Q^\star\)) is strictly increasing; it is
genuinely nonlinear except in the large-\(k\) regime, where
\(Q^\star\) becomes Gaussian and the map is affine
(cf.\ \S\ref{sec:gaussian-class} below).

\subsection{The MMSE-Fisher Identity}
\label{sec:fisher}

Before deriving the optimality condition, we record an exact rewriting of
the estimation term that replaces its posterior-mean functionals by a
single classical quantity, the Fisher information of the noisy
observation. This identity clarifies the structure of the problem, yields
sharp bounds, and, as we show in \S\ref{sec:EL}, gives the optimality
condition its cleanest, \emph{score} form. It is an interpretive lens: the
results of \S\ref{sec:EL} onward can all be phrased through
\(\operatorname{mmse}\) directly, but the Fisher form is more transparent.

\begin{definition}[Fisher information]
\label{def:fisher}
For an almost-everywhere positive, differentiable density \(\rho\) on
\(\mathbb{R}\), its (location) Fisher information is
\[
\mathcal{I}(\rho):=\int_{\mathbb{R}}\frac{\rho'(z)^2}{\rho(z)}\,dz
=\int_{\mathbb{R}}\bigl((\log\rho)'(z)\bigr)^2\rho(z)\,dz .
\]
\end{definition}

\begin{proposition}[MMSE-Fisher identity]
\label{prop:mmse-fisher}
Let \(Y\sim Q\in\mathcal{P}_2(\mathbb{R})\) and \(N\sim\mathcal{N}(0,1)\)
be independent, and let \(p_Z=Q*\phi\) be the density of \(Z=Y+N\). Then
\begin{equation}
\operatorname{mmse}(Q)=1-\mathcal{I}(p_Z).
\label{eq:mmse-fisher}
\end{equation}
\end{proposition}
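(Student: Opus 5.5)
We prove \eqref{eq:mmse-fisher} through Tweedie's formula, which writes the posterior mean through the score of \(p_Z\). The plan has four steps.

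\emph{Step 1: regularity of \(p_Z\).} Write \(p_Z(z)=\int\phi(z-y)\,Q(dy)\). Since \(\phi\) and all its derivatives are bounded and integrable, dominated convergence shows \(p_Z\) is \(C^\infty\) and strictly positive. Differentiating under the integral and using \(\phi'(u)=-u\phi(u)\) gives
\[
p_Z'(z)=\int (y-z)\,\phi(z-y)\,Q(dy)=p_Z(z)\bigl(m(z)-z\bigr),
\]
where \(m(z)=\mathbb{E}[Y\mid Z=z]\). This is Tweedie's formula: \((\log p_Z)'(z)=m(z)-z\).

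\emph{Step 2: rewrite the Fisher information.} Substituting the score,
\[
\mathcal{I}(p_Z)=\mathbb{E}\bigl[(m(Z)-Z)^2\bigr]=\mathbb{E}\bigl[(Z-\mathbb{E}[Y\mid Z])^2\bigr].
\]
Because \(Z=Y+N\), we have \(Z-\mathbb{E}[Y\mid Z]=\mathbb{E}[N\mid Z]\). So \(\mathcal{I}(p_Z)=\mathbb{E}[(\mathbb{E}[N\mid Z])^2]\), which is finite because \(\mathbb{E}[N^2]=1\).

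\emph{Step 3: orthogonal decomposition of the noise.} Apply the tower-property decomposition from Lemma~\ref{lem:mmse-reduction} to \(N\) instead of \(Y\):
\[
1=\mathbb{E}[N^2]=\mathbb{E}\bigl[(\mathbb{E}[N\mid Z])^2\bigr]+\mathbb{E}\bigl[(N-\mathbb{E}[N\mid Z])^2\bigr].
\]
Next, \(N-\mathbb{E}[N\mid Z]=-(Y-\mathbb{E}[Y\mid Z])\), because \(Y+N=Z\) is \(Z\)-measurable. Hence the second term equals \(\operatorname{mmse}(Q)\). Combining with Step 2 gives \(1=\mathcal{I}(p_Z)+\operatorname{mmse}(Q)\), which is \eqref{eq:mmse-fisher}.

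\emph{Main obstacle.} The delicate point is Step 1 for a general \(Q\in\mathcal{P}_2(\mathbb{R})\), which may be atomic or heavy-tailed. Two things must be checked.

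First, differentiation under the integral. The dominating function for \(|y-z|\phi(z-y)\) is uniform in \(y\): it is bounded by \(\sup_u|u|\phi(u)<\infty\). This holds locally in \(z\), so the interchange is valid for any probability measure \(Q\).

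Second, the identification \(m(z)=\int y\,\phi(z-y)\,Q(dy)/p_Z(z)\). This is Bayes' rule with a positive Gaussian likelihood. It is well defined because \(p_Z>0\) everywhere and \(\int|y|\,\phi(z-y)\,Q(dy)<\infty\).

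With these in place, \(\mathcal{I}(p_Z)\) is finite and the remaining steps are purely \(L^2\) manipulations. No additional integrability of \(\rho'^2/\rho\) needs to be assumed, since it is inherited from \(\mathbb{E}[N^2]=1\).
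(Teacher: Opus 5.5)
Your proof is correct, but it takes a different route from the paper's.

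Both arguments start from Tweedie's formula. The paper then differentiates a second time to get the pointwise identity
\[
m'(z)=\operatorname{Var}(Y\mid Z=z)=1+(\log p_Z)''(z).
\]
It integrates this against \(p_Z\), splits \(\int(\log p_Z)''p_Z\) as \(\int p_Z''-\int (p_Z')^2/p_Z\), and kills the first piece as a boundary term. You use only the first-order formula. You recognize the score as \(-\mathbb{E}[N\mid Z]\), and close with the Pythagorean decomposition of \(\mathbb{E}[N^2]=1\) together with \(N-\mathbb{E}[N\mid Z]=-(Y-\mathbb{E}[Y\mid Z])\).

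What your route buys is robustness. It needs no second derivatives and no integration by parts. Finiteness of \(\mathcal{I}(p_Z)\) comes out for free, since Jensen gives \(\mathcal{I}(p_Z)=\mathbb{E}[(\mathbb{E}[N\mid Z])^2]\le 1\), which is already the upper bound of Corollary~\ref{cor:fisher-bounds}. The paper's splitting step, by contrast, tacitly requires \(\mathcal{I}(p_Z)<\infty\), \(p_Z''\in L^1\), and \(p_Z'\to0\) at \(\pm\infty\). All of these hold for \(p_Z=Q*\phi\), but the paper only asserts them.

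What the paper's route buys is a stronger, local statement: the posterior variance at each \(z\) equals \(1+(\log p_Z)''(z)\). This gives the derivative of the score, \(s'=(\log p_Z)''\), which appears in the score-form condition of Theorem~\ref{thm:EL}, a direct meaning as posterior variance minus one. Your integrated argument does not yield that pointwise identity.
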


\begin{proof}
\emph{Step 1 (Tweedie's formula).} Since \(\phi'(u)=-u\phi(u)\),
differentiating \(p_Z(z)=\int\phi(z-y)\,Q(dy)\) under the integral gives
\[
p_Z'(z)=\int\phi'(z-y)\,Q(dy)=-\int(z-y)\phi(z-y)\,Q(dy)
=-z\,p_Z(z)+\int y\,\phi(z-y)\,Q(dy).
\]
Solving for \(\int y\,\phi(z-y)Q(dy)=p_Z'(z)+z\,p_Z(z)\) and dividing by
\(p_Z(z)\), the posterior mean
\(m(z):=\mathbb{E}[Y\mid Z=z]=\int y\,\phi(z-y)Q(dy)/p_Z(z)\) satisfies
\emph{Tweedie's formula} \cite{guo2005mutual}
\[
m(z)=z+\frac{p_Z'(z)}{p_Z(z)}=z+(\log p_Z)'(z).
\]
\emph{Step 2 (posterior variance).} Let
\(g(z):=p_Z(z)m(z)=\int y\,\phi(z-y)Q(dy)\). The same differentiation gives
\[
g'(z)=\int y\,\phi'(z-y)\,Q(dy)=-\int y(z-y)\phi(z-y)Q(dy)
=-z\,g(z)+\int y^2\phi(z-y)Q(dy),
\]
i.e.\ \(g'(z)=p_Z(z)\bigl(\mathbb{E}[Y^2\mid Z=z]-z\,m(z)\bigr)\). From
\(m=g/p_Z\), the quotient rule gives
\(m'=g'/p_Z-m\,(p_Z'/p_Z)=g'/p_Z-m\,(\log p_Z)'\); substituting
\(g'/p_Z=\mathbb{E}[Y^2\mid Z]-z\,m\) and \((\log p_Z)'=m-z\),
\[
m'(z)=\bigl(\mathbb{E}[Y^2\mid Z=z]-z\,m\bigr)-m(m-z)
=\mathbb{E}[Y^2\mid Z=z]-m(z)^2=\operatorname{Var}(Y\mid Z=z).
\]
\emph{Step 3 (assemble).} Differentiating Tweedie's formula gives
\(m'=1+(\log p_Z)''\), so \(\operatorname{Var}(Y\mid Z=z)=1+(\log p_Z)''(z)\).
Integrating against \(p_Z\),
\[
\operatorname{mmse}(Q)=\int\operatorname{Var}(Y\mid Z=z)\,p_Z(z)\,dz
=1+\int(\log p_Z)''\,p_Z\,dz .
\]
Finally, \((\log p_Z)''=p_Z''/p_Z-(p_Z'/p_Z)^2\), so
\(\int(\log p_Z)''p_Z=\int p_Z''-\int(p_Z')^2/p_Z=0-\mathcal{I}(p_Z)\)
(the term \(\int p_Z''=0\) as \(p_Z'\to0\) at \(\pm\infty\)); this
gives~\eqref{eq:mmse-fisher}.
\end{proof}

Substituting~\eqref{eq:mmse-fisher} into the
reformulation~\eqref{eq:OT-variational} recasts the whole problem as a
competition between transport and Fisher information.

\begin{corollary}[Fisher-information variational principle]
\label{cor:fisher-var}
The Witsenhausen optimal value satisfies
\begin{equation}
J^\star=1-\sup_{Q\in\mathcal{P}_2(\mathbb{R})}
\Bigl\{\mathcal{I}(Q*\phi)-k^2 W_2^2(P,Q)\Bigr\}.
\label{eq:fisher-var}
\end{equation}
Equivalently, the first controller seeks a law \(Q\), close to \(P\) in
\(W_2\), whose Gaussian smoothing \(Q*\phi\) carries \emph{large} Fisher
information.
\end{corollary}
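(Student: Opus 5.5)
The corollary follows by substituting the MMSE-Fisher identity into the variational reformulation and pulling the constant out of the infimum. I will start from Theorem~\ref{thm:ot-reformulation}, which gives \(J^\star=\inf_{Q\in\mathcal P_2(\mathbb R)}\{k^2W_2^2(P,Q)+\operatorname{mmse}(Q)\}\). For every \(Q\in\mathcal P_2(\mathbb R)\), Proposition~\ref{prop:mmse-fisher} replaces \(\operatorname{mmse}(Q)\) by \(1-\mathcal I(Q*\phi)\). This holds pointwise in \(Q\), not only at the optimizer. The objective therefore becomes \(1-\bigl(\mathcal I(Q*\phi)-k^2W_2^2(P,Q)\bigr)\). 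Since \(\inf_Q\{1-G(Q)\}=1-\sup_Q G(Q)\) for any real-valued \(G\), this yields~\eqref{eq:fisher-var} directly.

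Before doing that, I need to confirm that every quantity is finite and well defined on all of \(\mathcal P_2(\mathbb R)\).

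First, \(p_Z=Q*\phi\) is smooth and strictly positive for any \(Q\). Hence Definition~\ref{def:fisher} applies without any regularity assumption on \(Q\), including atomic \(Q\).

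Second, \(\mathcal I(p_Z)\in[0,1]\). This follows from the identity itself together with Lemma~\ref{lem:mmse-bounds}, which gives \(0\le\operatorname{mmse}\le 1\). It also follows independently from Tweedie's formula, \((\log p_Z)'(z)=\mathbb E[Y-Z\mid Z=z]=-\mathbb E[N\mid Z=z]\), combined with Jensen's inequality: \(\mathcal I(p_Z)=\mathbb E\bigl[(\mathbb E[N\mid Z])^2\bigr]\le\mathbb E[N^2]=1\).

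Third, \(W_2^2(P,Q)<\infty\) because both measures lie in \(\mathcal P_2\). So \(G(Q)=\mathcal I(Q*\phi)-k^2W_2^2(P,Q)\) is real-valued, and \(\sup_Q G\le 1\). Taking \(Q=P\) shows \(\sup_Q G\ge 0\), so the supremum is finite.

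The only point needing care, and the one I expect to be the main obstacle, is the boundary term \(\int p_Z''\,dz=0\) used in the proof of Proposition~\ref{prop:mmse-fisher}, for completely general \(Q\in\mathcal P_2\). I would justify it by writing \(p_Z'(z)=-\int(z-y)\phi(z-y)\,Q(dy)\). The function \(u\mapsto u\phi(u)\) is bounded and tends to \(0\) as \(|u|\to\infty\), so dominated convergence gives \(p_Z'(z)\to0\) as \(z\to\pm\infty\). Similarly \(p_Z''=\int(u^2-1)\phi(u)|_{u=z-y}\,Q(dy)\) is integrable by Tonelli. Hence \(\int p_Z''=\lim\bigl[p_Z'\bigr]_{-R}^{R}=0\).

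The integrability of \(\operatorname{Var}(Y\mid Z)\,p_Z\) in Step~3 is automatic, since its integral equals \(\operatorname{mmse}(Q)\le1\). This justifies splitting \(\int(\log p_Z)''p_Z\) into \(\int p_Z''-\mathcal I(p_Z)\), because both pieces are finite.

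With these points confirmed, the identity holds for every admissible \(Q\), the substitution is legitimate, and the corollary follows. As a final remark, the attainment of the infimum in Proposition~\ref{prop:existence} transfers verbatim: the same \(Q^\star\) attains the supremum in~\eqref{eq:fisher-var}.
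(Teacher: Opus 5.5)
Your proposal is correct and takes the paper's own route. The paper obtains \eqref{eq:fisher-var} by substituting the MMSE--Fisher identity \eqref{eq:mmse-fisher}, valid for every \(Q\in\mathcal P_2(\mathbb R)\), into the reformulation \eqref{eq:OT-variational} and writing \(\inf_Q\{1-G(Q)\}=1-\sup_Q G(Q)\). Your extra checks are sound and supply rigor the paper leaves implicit: the bound \(\mathcal I(Q*\phi)\le 1\) via Tweedie and conditional Jensen, the vanishing of \(\int p_Z''\) by dominated convergence and Tonelli, and the integrability that legitimizes the split in Step~3.
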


The Fisher form immediately re-derives the bounds of
Lemma~\ref{lem:mmse-bounds} and pinpoints the Gaussian as the worst case
for estimation.

\begin{corollary}[Estimation bounds via Cram\'er-Rao and Stam]
\label{cor:fisher-bounds}
For \(Q\in\mathcal{P}_2(\mathbb{R})\) with variance \(v\),
\[
\frac{1}{v+1}\ \le\ \mathcal{I}(Q*\phi)\ \le\ 1,
\qquad\text{hence}\qquad
0\ \le\ \operatorname{mmse}(Q)\ \le\ \frac{v}{v+1},
\]
recovering Lemma~\ref{lem:mmse-bounds}. The lower bound on
\(\mathcal{I}\) is the Cram\'er-Rao inequality
\(\mathcal{I}(p_Z)\ge 1/\operatorname{Var}(Z)\) with
\(\operatorname{Var}(Z)=v+1\), an equality precisely when \(Z\) (hence
\(Q\)) is Gaussian; the upper bound is the Stam convolution inequality
\(\mathcal{I}(Y+N)\le\mathcal{I}(N)=1\). Consequently, among laws of a
fixed variance the Gaussian \emph{minimizes} \(\mathcal{I}(Q*\phi)\) and
therefore \emph{maximizes} the estimation cost: the Gaussian ansatz of
\S\ref{sec:gaussian-class} is the worst case for estimation, and every
improvement must come from a non-Gaussian \(Q\).
\end{corollary}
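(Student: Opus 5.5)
The plan is to prove the two bounds on \(\mathcal{I}(p_Z)\) separately, with \(p_Z=Q*\phi\), and then read off the MMSE bounds from the MMSE-Fisher identity of Proposition~\ref{prop:mmse-fisher}. That identity turns \(1/(v+1)\le\mathcal{I}(p_Z)\le 1\) directly into \(0\le\operatorname{mmse}(Q)\le 1-1/(v+1)=v/(v+1)\). This recovers Lemma~\ref{lem:mmse-bounds}, which we may also use as an independent cross-check.

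\emph{Lower bound (Cram\'er--Rao).} Let \(\rho:=(\log p_Z)'\) be the score. Integration by parts gives \(\mathbb{E}[\rho(Z)]=\int p_Z'=0\) and \(\mathbb{E}[(Z-\mathbb{E}Z)\rho(Z)]=\int (z-\mathbb{E}Z)p_Z'(z)\,dz=-1\). The boundary terms vanish because \(p_Z\) is a Gaussian mixture and \(Z\) has a finite second moment; this is the only technical point. Cauchy--Schwarz then yields
\[
1\le \operatorname{Var}(Z)\,\mathcal{I}(p_Z)=(v+1)\,\mathcal{I}(p_Z).
\]
Equality forces \(\rho(z)=-c(z-\mathbb{E}Z)\), so \(p_Z\) is Gaussian. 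By Cram\'er's decomposition theorem, or more elementarily by dividing characteristic functions by \(e^{-t^2/2}\), \(Q\) is then Gaussian.

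\emph{Upper bound.} Rather than invoking Stam's inequality as a black box, I would get it for free from the identity. Since \(\operatorname{mmse}(Q)=\mathbb{E}[\operatorname{Var}(Y\mid Z)]\ge0\), we have \(\mathcal{I}(p_Z)=1-\operatorname{mmse}(Q)\le1\). To match the statement's attribution, I would also note the direct route. Write \(\rho(z)=\mathbb{E}[-N\mid Z=z]\), which is Tweedie's formula rearranged as \(\rho(z)=m(z)-z=-\mathbb{E}[N\mid Z=z]\). Conditional Jensen then gives \(\mathcal{I}(p_Z)=\mathbb{E}[\mathbb{E}[N\mid Z]^2]\le\mathbb{E}[N^2]=1\), which is Stam's \(\mathcal{I}(Y+N)\le\mathcal{I}(N)\) in this special case.

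\emph{Consequence.} Fix the variance \(v\) and let \(Q=\mathcal{N}(\mu,v)\). Then \(p_Z=\mathcal{N}(\mu,v+1)\) and \(\mathcal{I}=1/(v+1)\), so the Cram\'er--Rao bound is attained. Hence the Gaussian minimizes \(\mathcal{I}(Q*\phi)\), and therefore maximizes \(\operatorname{mmse}\), among laws of variance \(v\). By the equality characterization it is the unique maximizer up to translation.

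The main obstacle is making the integrations by parts and the equality case rigorous for a general \(Q\in\mathcal{P}_2\). I would handle this using smoothness and strict positivity of \(p_Z\) together with a tail bound of the form \(|p_Z'(z)|\le C(1+|z|)p_Z(z)\) coming from the mixture structure.
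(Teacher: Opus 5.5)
Your proposal is correct and follows the paper's route. You bound $\mathcal{I}(Q*\phi)$ below by Cram\'er--Rao and above by the Stam-type monotonicity, then transfer both bounds to $\operatorname{mmse}$ through the identity $\operatorname{mmse}(Q)=1-\mathcal{I}(Q*\phi)$ of Proposition~\ref{prop:mmse-fisher}. The only difference is that the paper cites the two inequalities while you prove them in place: Cauchy--Schwarz plus the characteristic-function argument for the equality case, and Tweedie's formula $(\log p_Z)'(z)=-\mathbb{E}[N\mid Z=z]$ with conditional Jensen for $\mathcal{I}\le 1$. This makes the corollary self-contained.
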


\subsection{Euler-Lagrange Characterization}
\label{sec:EL}

Throughout this subsection we assume \(P\) has a smooth, strictly
positive density \(p\), and, by Proposition~\ref{prop:ac} and
Fact~\ref{fact:regularity}, that the minimizer \(Q^\star\) is
absolutely continuous with a smooth, strictly positive density
\(q^\star\) on an interval support. Let \(Y\sim Q^\star\),
\(N\sim\mathcal{N}(0,1)\), \(Z=Y+N\), and let \(\phi\) be the standard
normal density. We compute the first variation of
\(J(Q)=k^2 W_2^2(P,Q)+\operatorname{mmse}(Q)\) term by term.

\begin{lemma}[First variation of the transport term]
\label{lem:wass-derivative}
Let \(\varphi_Q\) be a Kantorovich potential for the quadratic-cost
optimal transport from \(Q\) to \(P\), normalized by
\[
\tfrac12 W_2^2(P,Q)=\int\varphi_Q\,dQ+\int\varphi_Q^{c}\,dP,
\qquad c(x,y)=\tfrac12(x-y)^2 .
\]
Then the linear first variation of \(Q\mapsto\tfrac12 W_2^2(P,Q)\) is
represented, up to an additive constant, by \(\varphi_Q\); consequently
\[
\frac{\delta}{\delta Q(y)}\bigl[k^2 W_2^2(P,Q)\bigr]=2k^2\,\varphi_Q(y).
\]
Moreover, in one dimension \(\varphi_Q\) is differentiable \(Q\)-a.e.\
with
\begin{equation}
\varphi_Q'(y)=y-T_Q(y),
\qquad T_Q:=F_P^{-1}\circ F_Q\quad(\,T_{Q\,\#}Q=P\,).
\label{eq:kantorovich-deriv}
\end{equation}
\end{lemma}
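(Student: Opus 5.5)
The first assertion of Lemma~\ref{lem:wass-derivative} is Fact~\ref{fact:wass-var} restated, so the plan is to recall the Kantorovich-duality argument behind it and then verify the one-dimensional derivative formula directly. By duality,
\[
\tfrac12W_2^2(P,Q)=\sup_{\psi}\Bigl\{\int\psi\,dQ+\int\psi^c\,dP\Bigr\},
\]
with the supremum over \(c\)-concave \(\psi\). Thus \(Q\mapsto\tfrac12W_2^2(P,Q)\) is a supremum of functionals affine in \(Q\), hence convex. For a perturbation \(Q_\varepsilon=Q+\varepsilon\chi\) with \(\int d\chi=0\), the optimal potential \(\varphi_Q\) is admissible at \(Q_\varepsilon\), which gives the lower bound
\[
\tfrac12W_2^2(P,Q_\varepsilon)-\tfrac12W_2^2(P,Q)\ge\varepsilon\int\varphi_Q\,d\chi .
\]
For the matching upper bound, we use the optimal potentials \(\varphi_{Q_\varepsilon}\) at \(Q_\varepsilon\), which satisfy
\[
\tfrac12W_2^2(P,Q_\varepsilon)-\tfrac12W_2^2(P,Q)\le\varepsilon\int\varphi_{Q_\varepsilon}\,d\chi .
\]
We then pass to the limit \(\varepsilon\to0\) via the stability of Kantorovich potentials: they are uniformly locally Lipschitz and semiconcave after fixing an additive constant. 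The limit is unique up to constants because \(P\) has a strictly positive density and hence connected support. This is exactly the cited result in \cite{santambrogio2015optimal}, and I would invoke it rather than reprove it. Because \(\chi\) has zero mass, the additive constant does not matter. Multiplying by \(2k^2\) gives \(\delta[k^2W_2^2]/\delta Q=2k^2\varphi_Q\).

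For \eqref{eq:kantorovich-deriv}, I would work from the \(c\)-concavity structure. Write \(\varphi_Q(y)=\tfrac12y^2-u(y)\) with \(u\) convex, which holds because \(\varphi_Q\) is an infimum of \(\tfrac12(x-y)^2-\varphi_Q^c(x)\) over \(x\). Convex functions are differentiable Lebesgue-a.e., and hence \(Q\)-a.e. here, since \(Q^\star\) is absolutely continuous by Proposition~\ref{prop:ac} and Fact~\ref{fact:regularity}; for a general \(Q\) one restricts to points of differentiability. The optimal plan is concentrated on pairs \((y,x)\) where equality \(\varphi_Q(y)+\varphi_Q^c(x)=\tfrac12(x-y)^2\) holds. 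At such a point, the map \(y'\mapsto\tfrac12(x-y')^2-\varphi_Q(y')\) attains its minimum at \(y\), so the first-order condition gives \(\varphi_Q'(y)=y-x\). By Fact~\ref{fact:1d-ot}, applied with the roles of \(P\) and \(Q\) exchanged, the optimal plan is the graph of the monotone map \(T_Q=F_P^{-1}\circ F_Q\), so \(x=T_Q(y)\) for \(Q\)-a.e.\ \(y\).

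The main obstacle is the role reversal in that last step. Fact~\ref{fact:1d-ot} is stated for absolutely continuous \emph{source} \(P\), so the plan pushing \(Q\) to \(P\) is obtained as the transpose of the monotone coupling \((\mathrm{id},F_Q^{-1}\circ F_P)_\#P\). One must check that this transposed plan is supported on the graph of \(F_P^{-1}\circ F_Q\) \(Q\)-a.e. This holds when \(F_Q\) is continuous, i.e.\ when \(Q\) has no atoms, which is guaranteed at \(Q^\star\) by Proposition~\ref{prop:ac}. At atoms of a general \(Q\) the plan splits mass, and \(\varphi_Q\) may fail to be differentiable there. That is exactly why the statement is only \(Q\)-a.e.; these points will be handled by noting they form a \(Q\)-null set whenever \(Q\) is atomless.
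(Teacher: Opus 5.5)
Your proposal is correct. For the first-variation claim and the factor \(2k^2\) you do what the paper does. The paper's proof is a two-line appeal to Fact~\ref{fact:wass-var} for both assertions, and your duality/stability sketch simply unpacks that same citation.

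The genuine difference is the one-dimensional formula \eqref{eq:kantorovich-deriv}, which the paper imports and you derive. You write \(\varphi_Q=\tfrac12 y^2-u\) with \(u\) convex. You read \(\varphi_Q'(y)=y-x\) off the equality set \(\varphi_Q(y)+\varphi_Q^c(x)=\tfrac12(x-y)^2\) of the optimal plan. You then identify \(x=T_Q(y)\) by transposing the comonotone coupling of Fact~\ref{fact:1d-ot}.

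What your route buys is that it exposes a hypothesis the citation hides: \(Q\) must be atomless. One correction to your closing sentence: atoms are not \(Q\)-null, so the ``\(Q\)-a.e.'' qualifier does not absorb them. At an atom \(y_0\) the plan spreads its mass over a \(P\)-interval \(I\) of positive length. Your own first-order argument then gives \(x\in\partial u(y_0)\) for every \(x\) in that fiber, so \(\partial u(y_0)\supseteq I\). Hence \(\varphi_Q\) has a genuine kink at a point of positive \(Q\)-mass; for \(Q=\delta_0\) the optimal potential is even \(-\infty\) off the atom.

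So the second claim of the lemma holds exactly under the hypothesis implicit in its parenthetical \(T_{Q\#}Q=P\), namely that \(Q\) has no atoms. Atomlessness suffices, not full absolute continuity, since a convex function on \(\mathbb{R}\) is differentiable off a countable set. This is precisely the case in which the paper uses the lemma, at \(Q^\star\) in Theorem~\ref{thm:EL}. Your proof covers that case completely.
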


\begin{proof}
By Fact~\ref{fact:wass-var},
\(\tfrac{\delta}{\delta Q}[\tfrac12 W_2^2(P,Q)]=\varphi_Q\) and
\(\varphi_Q'(y)=y-T_Q(y)\). Since \(k^2 W_2^2=2k^2\cdot\tfrac12 W_2^2\),
the functional derivative of the transport term is \(2k^2\varphi_Q\),
which is~\eqref{eq:kantorovich-deriv}.
\end{proof}

Next we compute the estimation term's first variation explicitly.

\begin{lemma}[First variation of the MMSE term]
\label{lem:mmse-derivative}
For \(Q\) with density \(q\), define the mixture density, its first-moment
density, and the posterior mean
\[
p_Z^q(z)=\int q(u)\,\phi(z-u)\,du,\quad
g_q(z)=\int u\,q(u)\,\phi(z-u)\,du,\quad
m_q(z)=\frac{g_q(z)}{p_Z^q(z)}=\mathbb{E}_q[Y\mid Z=z],
\]
together with the Gaussian-smoothed posterior functionals
\begin{equation}
a_q(y)=\int_{\mathbb{R}} m_q(z)\,\phi(z-y)\,dz,
\qquad
b_q(y)=\int_{\mathbb{R}} m_q(z)^2\,\phi(z-y)\,dz.
\label{eq:aq-bq-def}
\end{equation}
Then
\begin{equation}
\frac{\delta}{\delta q(y)}\operatorname{mmse}(q)
=
y^2-2y\,a_q(y)+b_q(y).
\label{eq:delta-mmse}
\end{equation}
\end{lemma}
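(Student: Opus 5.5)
I will split the estimation cost as \(\operatorname{mmse}(q)=\mathbb{E}_q[Y^2]-\Phi(q)\), where
\[
\Phi(q):=\mathbb{E}\bigl[m_q(Z)^2\bigr]=\int\frac{g_q(z)^2}{p_Z^q(z)}\,dz .
\]
This is the decomposition already used in the proof of Proposition~\ref{prop:ac}. It follows from \(\mathbb{E}[\operatorname{Var}(Y\mid Z)]=\mathbb{E}[Y^2]-\mathbb{E}[(\mathbb{E}[Y\mid Z])^2]\) by the tower property.

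The first piece is linear in \(q\), namely \(\int u^2q(u)\,du\). Its first variation is therefore \(y^2\), which produces the leading term of~\eqref{eq:delta-mmse}. It remains to show that the first variation of \(\Phi\) is \(2y\,a_q(y)-b_q(y)\).

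For \(\Phi\), I will perturb \(q_t=q+th\), with \(h\) a bounded, compactly supported signed density of zero mass. Both \(p_Z^{q_t}=p_Z^q+t\,p^h\) and \(g_{q_t}=g_q+t\,g^h\) are affine in \(t\), where
\[
p^h(z)=\int h(u)\phi(z-u)\,du,\qquad g^h(z)=\int u\,h(u)\phi(z-u)\,du .
\]
Differentiating the integrand \(g^2/p\) at \(t=0\) gives
\[
\frac{d}{dt}\Big|_{0}\Phi(q_t)=\int\Bigl(\frac{2g_q\,g^h}{p_Z^q}-\frac{g_q^2\,p^h}{(p_Z^q)^2}\Bigr)dz
=\int\bigl(2m_q(z)\,g^h(z)-m_q(z)^2\,p^h(z)\bigr)\,dz .
\]
I then substitute the definitions of \(g^h\) and \(p^h\) and swap the order of integration (Fubini). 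This yields
\[
\int h(y)\Bigl(2y\int m_q(z)\phi(z-y)\,dz-\int m_q(z)^2\phi(z-y)\,dz\Bigr)dy=\int h(y)\bigl(2y\,a_q(y)-b_q(y)\bigr)\,dy .
\]
So the first variation of \(\Phi\) is \(2y\,a_q-b_q\). Subtracting it from \(y^2\) gives~\eqref{eq:delta-mmse}, which holds up to an additive constant (irrelevant since \(h\) has zero mass).

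The main obstacle is justifying differentiation under the \(z\)-integral and the Fubini swap. I would handle it as follows.
\begin{itemize}
\item Since \(m_q(z)\) is a posterior mean, it lies in the convex hull of the support of \(q\). Combined with \(q\in\mathcal{P}_2\), a Jensen argument gives \(\int m_q^2\,p_Z^q<\infty\). Near the tails I will use the cruder bound \(|m_q(z)|\le C(1+|z|)\), as already invoked in Proposition~\ref{prop:ac}.
\item For \(h\) compactly supported in \([-R,R]\), the ratios \(|p^h|/p_Z^q\) and \(|g^h|/p_Z^q\) are bounded by \(C\,e^{R|z|}\) times \(\phi(z\pm R)/p_Z^q(z)\). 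These ratios are controlled because \(p_Z^q\ge c\,\phi(z-y_0)\,q([y_0-1,y_0+1])\)-type lower bounds hold, so the Gaussian tails are comparable up to an exponential factor in \(|z|\).
\item The difference quotients of \(g^2/p\) are then dominated by an integrable multiple of \((1+z^2)\,p_Z^q\) times such ratios. Hence dominated convergence applies for small \(|t|\) (keeping \(p_Z^{q_t}>0\)).
\item Fubini is justified because \(\int\!\!\int |h(y)|\,(1+m_q^2)\,\phi(z-y)\,dz\,dy<\infty\), using the linear growth of \(m_q\) and the compact support of \(h\).
\end{itemize}
If these tail estimates prove delicate, I would first establish the formula for \(h\) supported in a bounded interval, and extend it by density, which is all the Euler–Lagrange argument needs.
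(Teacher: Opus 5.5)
Your proposal is correct and follows the paper's proof essentially step for step. It uses the same split \(\operatorname{mmse}(q)=\mathbb{E}_q[Y^2]-\Phi(q)\) with \(\Phi(q)=\int g_q^2/p_Z^q\,dz\), the same pointwise derivative \(2m_q\,g^h-m_q^2\,p^h\) of the integrand, and the same Fubini swap giving \(\delta\Phi/\delta q=2y\,a_q-b_q\). Your dominated-convergence argument goes beyond the paper, which only justifies Fubini via linear growth of \(m_q\). The one point to tighten is that the uniform bound \(|m_{q_t}(z)|\le C(1+|z|)\) and positivity of \(p_Z^{q_t}\) at large \(|z|\) hold automatically only when \(q+th\ge0\), so you should state the derivative for such admissible directions.
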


\begin{proof}
Write \(\operatorname{mmse}(q)=\int y^2 q(y)\,dy-\Phi(q)\) with
\(\Phi(q):=\int m_q(z)^2 p_Z^q(z)\,dz=\int g_q(z)^2/p_Z^q(z)\,dz\), where
\(g_q,p_Z^q\) are linear in \(q\). For a variation
\(q_\varepsilon=q+\varepsilon h\) with \(\int h=0\), one has
\(\partial_\varepsilon g_{q_\varepsilon}(z)=\int u\,h(u)\phi(z-u)\,du\)
and \(\partial_\varepsilon p_Z^{q_\varepsilon}(z)=\int
h(u)\phi(z-u)\,du\) (both linear in \(q\)). Since
\(\partial_\varepsilon(g^2/p)=2(g/p)\,\partial_\varepsilon g
-(g/p)^2\,\partial_\varepsilon p=2m_q\,\partial_\varepsilon g
-m_q^2\,\partial_\varepsilon p\),
\[
\delta\Phi(q)[h]
=\int\bigl(2m_q(z)\,\partial_\varepsilon g(z)-m_q(z)^2\,\partial_\varepsilon
p(z)\bigr)\,dz .
\]
Substitute the two variations and swap the order of the \(z\)- and
\(u\)-integrals (Fubini; the integrands are absolutely integrable as
\(m_q\) has at most linear growth and \(\phi\) is Schwartz). The first
term becomes
\(\int h(u)\bigl(2u\!\int m_q(z)\phi(z-u)\,dz\bigr)du\) and the second
\(\int h(u)\bigl(\int m_q(z)^2\phi(z-u)\,dz\bigr)du\); renaming \(u\to y\),
\[
\delta\Phi(q)[h]
=\int h(y)\Bigl(2y\!\int m_q(z)\phi(z-y)\,dz-\!\int
m_q(z)^2\phi(z-y)\,dz\Bigr)dy .
\]
By the definitions~\eqref{eq:aq-bq-def} this is
\(\int h(y)\,(2y\,a_q(y)-b_q(y))\,dy\), so
\(\delta\Phi/\delta q=2y\,a_q-b_q\). Since
\(\delta(\int y^2 q)/\delta q=y^2\), subtracting
yields~\eqref{eq:delta-mmse}.
\end{proof}

Combining the two lemmas gives the optimality condition.

\begin{theorem}[Euler-Lagrange characterization; score form]
\label{thm:EL}
Let \(q^\star\) be the density of a minimizer \(Q^\star\), let
\(p_Z^\star=Q^\star*\phi\) with \emph{score} \(s^\star:=(\log p_Z^\star)'\),
and write \(a^\star=a_{q^\star}\), \(b^\star=b_{q^\star}\),
\(m^\star=m_{q^\star}\), \(\varphi^\star=\varphi_{Q^\star}\), and
\(T^\star=F_P^{-1}\circ F_{Q^\star}\). There is a constant
\(\lambda\in\mathbb{R}\) such that the cost sensitivity
\[
\Lambda^\star(y):=2k^2\varphi^\star(y)+y^2-2y\,a^\star(y)+b^\star(y)
\]
obeys the complementary-slackness conditions
\begin{equation}
\Lambda^\star(y)=\lambda\quad(Q^\star\text{-a.e.}),
\qquad
\Lambda^\star(y)\ge\lambda\quad(\text{Lebesgue-a.e.}).
\label{eq:EL}
\end{equation}
On the interval \(\operatorname{supp}Q^\star\) equality holds throughout,
and differentiating yields the optimality condition in \emph{score form}
\begin{equation}
2k^2\bigl(y-T^\star(y)\bigr)
+\frac{d}{dy}\Bigl[\bigl(2\,s^{\star\prime}+(s^\star)^2\bigr)*\phi\Bigr](y)=0,
\qquad q^\star(y)>0,
\label{eq:score-EL}
\end{equation}
or equivalently, through the posterior functionals \(a^\star,b^\star\),
\begin{equation}
2k^2\bigl(y-T^\star(y)\bigr)
+\frac{d}{dy}\Bigl(y^2-2y\,a^\star(y)+b^\star(y)\Bigr)=0 .
\label{eq:EL-differential}
\end{equation}
\end{theorem}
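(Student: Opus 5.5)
The proof is a standard first-order optimality argument on the convex set \(\mathcal P_2(\mathbb R)\), using the two first-variation lemmas already established. Fix a minimizer \(Q^\star\) and any competitor \(\nu\in\mathcal P_2(\mathbb R)\) for which \(\Lambda^\star\) is \(\nu\)-integrable, for instance \(\nu\) compactly supported with bounded density, or a point mass. Form the segment \(Q_t=(1-t)Q^\star+t\nu\) for \(t\in[0,1]\). By minimality, \(J(Q_t)\ge J(Q^\star)\), so the one-sided derivative at \(t=0^+\) is nonnegative.

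To compute that derivative, Lemma~\ref{lem:wass-derivative} gives the transport contribution \(2k^2\int\varphi^\star\,d(\nu-Q^\star)\). Lemma~\ref{lem:mmse-derivative} gives the estimation contribution \(\int(y^2-2ya^\star+b^\star)\,d(\nu-Q^\star)\); as in the proof of Proposition~\ref{prop:ac}, this extends from density perturbations to measure perturbations because \(g\) and \(p_Z\) are affine in \(t\) and \(p_Z^\star>0\). Together these yield
\[
\int\Lambda^\star\,d\nu\ \ge\ \int\Lambda^\star\,dQ^\star=:\lambda .
\]
Taking \(\nu=\delta_{y}\) gives \(\Lambda^\star(y)\ge\lambda\) for every \(y\), hence Lebesgue-a.e. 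Since \(\int(\Lambda^\star-\lambda)\,dQ^\star=0\) with a nonnegative integrand, equality holds \(Q^\star\)-a.e. This proves \eqref{eq:EL}.

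Equality throughout the support comes from continuity. Both \(\varphi^\star\) (by \eqref{eq:kantorovich-deriv}, since \(T^\star\) is continuous on the interval support when \(q^\star>0\)) and \(a^\star,b^\star\) (as Gaussian convolutions of functions of linear/quadratic growth) are continuous, so \(\Lambda^\star\) is continuous. A continuous function equal to \(\lambda\) a.e. on an interval with positive density equals \(\lambda\) everywhere there. Differentiating on the interior, with \(\varphi^{\star\prime}=y-T^\star\) from Lemma~\ref{lem:wass-derivative} and differentiation under the integral in \(a^\star,b^\star\), gives \eqref{eq:EL-differential}.

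For the score form \eqref{eq:score-EL}, I rewrite \(y^2-2ya^\star+b^\star=\int(m^\star(z)-y)^2\phi(z-y)\,dz\). Substituting \(z=y+u\) and Tweedie's formula \(m^\star=z+s^\star\) from Proposition~\ref{prop:mmse-fisher} gives
\[
\int\bigl(u+s^\star(y+u)\bigr)^2\phi(u)\,du=1+2\!\int u\,s^\star(y+u)\phi(u)\,du+\bigl((s^\star)^2*\phi\bigr)(y).
\]
Gaussian integration by parts, using \(u\phi=-\phi'\), turns the middle term into \(2(s^{\star\prime}*\phi)(y)\). So the estimation sensitivity equals \(1+\bigl((2s^{\star\prime}+(s^\star)^2)*\phi\bigr)(y)\), and its derivative is the bracket in \eqref{eq:score-EL}.

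The main obstacle is justification rather than algebra, in two places. First, the one-sided derivative of \(W_2^2(P,Q_t)\) must equal \(2\int\varphi^\star\,d(\nu-Q^\star)\) along mixtures, including for Dirac \(\nu\). I would first try the inequality route, which needs only an upper bound: \(\tfrac12W_2^2(P,Q_t)\le\) the value of the dual problem, with \(\varphi^\star\) kept as a feasible (not necessarily optimal) potential for the perturbed problem. This shows the derivative is at most \(2\int\varphi^\star\,d(\nu-Q^\star)\), which is exactly the direction needed to obtain \(\int\Lambda^\star d\nu\ge\lambda\). Integrability of \(\varphi^\star\) is also needed; it has quadratic growth. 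Second, integrability of \(s^\star\) and its derivatives for the integration by parts holds because \(m^\star\) grows at most linearly.
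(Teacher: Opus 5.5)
Your architecture matches the paper's. The first variation of \(J\) at \(Q^\star\) is \(\int\Lambda^\star\,dh\); a multiplier argument gives \eqref{eq:EL}; continuity on the interval support upgrades this to equality there; and differentiating with \(\varphi^{\star\prime}=y-T^\star\) gives \eqref{eq:EL-differential}. Testing against mixtures \(Q_t=(1-t)Q^\star+t\delta_y\) and setting \(\lambda:=\int\Lambda^\star\,dQ^\star\) is a cleaner version of the paper's ``move an infinitesimal mass'' sentence.

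For the score form you take a genuinely different route. The paper passes through the MMSE-Fisher identity and pushes \(\delta\mathcal I/\delta\rho=-(2s'+s^2)\) through the linear map \(Q\mapsto Q*\phi\). You instead compute \(y^2-2ya^\star+b^\star=\int(m^\star(z)-y)^2\phi(z-y)\,dz\) directly, using Tweedie's formula and Gaussian integration by parts. That is valid and slightly sharper: it gives the pointwise identity \(y^2-2ya^\star+b^\star=1+\bigl((2s^{\star\prime}+(s^\star)^2)*\phi\bigr)(y)\). This exposes the additive constant \(1\) that the paper's ``this equals'' absorbs into the usual ambiguity of first variations; it disappears on differentiating.

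There is, however, a real error in the step you single out as the main obstacle: the inequality goes the wrong way. The Kantorovich dual value is a \emph{supremum} over admissible pairs. Keeping the old pair \((\varphi^\star,(\varphi^\star)^c)\) as a candidate for \((P,Q_t)\) therefore gives
\[
\tfrac12W_2^2(P,Q_t)\ \ge\ \int\varphi^\star\,dQ_t+\int(\varphi^\star)^c\,dP
=\tfrac12W_2^2(P,Q^\star)+t\int\varphi^\star\,d(\nu-Q^\star).
\]
This is a \emph{lower} bound on the transport increment, i.e.\ just the subgradient inequality for the convex map \(t\mapsto W_2^2(P,Q_t)\). To pass from \(0\le J(Q_t)-J(Q^\star)\) to \(\int\Lambda^\star\,d\nu\ge\lambda\) you need the \emph{upper} bound
\[
W_2^2(P,Q_t)-W_2^2(P,Q^\star)\le 2t\int\varphi^\star\,d(\nu-Q^\star)+o(t).
\]
For the estimation term the tangent-line inequality does point the right way, because \(\Phi\) is convex and \(\operatorname{mmse}\) is concave along mixtures, exactly as used in Proposition~\ref{prop:ac}. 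For the convex transport term it points the wrong way.

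The upper bound is the harder half of the differentiability statement. Take an optimal pair \((\varphi_t,\varphi_t^c)\) for \((P,Q_t)\) and use it as an admissible pair for \((P,Q^\star)\); this yields \(W_2^2(P,Q_t)-W_2^2(P,Q^\star)\le 2t\int\varphi_t\,d(\nu-Q^\star)\). You then need \(\varphi_t\to\varphi^\star\). That rests on uniqueness of the potential up to constants, which holds here because \(P\) has a positive density on \(\mathbb R\), plus some care with unbounded supports. This is precisely what Fact~\ref{fact:wass-var} imports and what the paper relies on.

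Alternatively, in one dimension you can differentiate \(\int_0^1(F_P^{-1}-F_{Q_t}^{-1})^2\,dt\) directly. For \(\nu=\delta_{y_0}\) with \(y_0\in\operatorname{supp}Q^\star\), \(F_{Q_t}^{-1}\) is \(F_{Q^\star}^{-1}\) reparametrized with a flat piece of length \(t\) inserted at level \(y_0\). An integration by parts then shows the derivative at \(t=0\) equals \(2\varphi^\star(y_0)-2\int\varphi^\star\,dQ^\star\). Either repair closes the gap; the inequality route as you stated it does not.
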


\begin{proof}
By Lemmas~\ref{lem:wass-derivative} and~\ref{lem:mmse-derivative}, the
first variation of \(J\) at \(Q^\star\) in a mass-zero direction \(h\) is
\(\delta J(q^\star)[h]=\int h(y)\,\Lambda^\star(y)\,dy\), with
\(\Lambda^\star=2k^2\varphi^\star+y^2-2y\,a^\star+b^\star\). Minimizing
\(J\) subject to \(\int q=1\) and \(q\ge0\) yields, at the optimum, a
multiplier \(\lambda\) (for the mass constraint) with
\(\Lambda^\star=\lambda\) on \(\{q^\star>0\}\) and
\(\Lambda^\star\ge\lambda\) elsewhere: otherwise moving an
infinitesimal mass toward a point where \(\Lambda^\star<\lambda\) would
strictly decrease \(J\). This is~\eqref{eq:EL}. Since
\(\operatorname{supp}Q^\star\) is an interval
(Proposition~\ref{prop:ac}), \(\Lambda^\star\equiv\lambda\) there;
differentiating in \(y\) and substituting
\(\varphi^{\star\prime}(y)=y-T^\star(y)\)
from~\eqref{eq:kantorovich-deriv} gives~\eqref{eq:EL-differential}.
For the score form, the MMSE-Fisher identity~\eqref{eq:mmse-fisher} gives
\(\frac{\delta}{\delta Q}\operatorname{mmse}
=-\frac{\delta}{\delta Q}\mathcal I(Q*\phi)\); the first variation of
Fisher information is, with \(s=(\log\rho)'\) and
\(s'=\rho''/\rho-(\rho'/\rho)^2\),
\[
\frac{\delta\mathcal I}{\delta\rho}
=-2\frac{\rho''}{\rho}+\Bigl(\frac{\rho'}{\rho}\Bigr)^2
=-\bigl(2s'+s^2\bigr),
\]
and since \(\rho=Q*\phi\) is linear in \(Q\) with \(\phi\) even, the chain
rule gives \(\frac{\delta}{\delta Q}\operatorname{mmse}=(2s'+s^2)*\phi\).
This equals \(y^2-2y\,a^\star+b^\star\)
(Lemma~\ref{lem:mmse-derivative}); substituting it
in~\eqref{eq:EL-differential} yields~\eqref{eq:score-EL}.
\end{proof}

\begin{remark}
The score form~\eqref{eq:score-EL} says the transport pull
\(2k^2(y-T^\star(y))\) is balanced by the gradient of the Fisher
sensitivity \((2s^{\star\prime}+(s^\star)^2)*\phi\), the first variation
of \(\mathcal I(Q^\star*\phi)\). Expanding the equivalent
form~\eqref{eq:EL-differential},
\(\frac{d}{dy}(y^2-2y\,a^\star+b^\star)
=2y-2(a^\star+y\,a^{\star\prime})+b^{\star\prime}\), exhibits it as a
nonlinear, nonlocal integral equation coupling the transport map
\(T^\star\) with the posterior structure \(m^\star\). It admits no
closed-form solution but supports asymptotic analysis (small or large
\(k\)) and fixed-point numerical schemes in the space of probability
measures.
\end{remark}

\subsection{A Semi-Closed-Form Solution in the Gaussian Class}
\label{sec:gaussian-class}

To obtain an analytically tractable benchmark, we restrict the
minimization problem
\begin{equation}
J^\star
=
\inf_{Q}
\Bigl\{
k^2 W_2^2(P,Q)
+
\operatorname{mmse}(Q)
\Bigr\}
\label{eq:OT-variational-semi}
\end{equation}
to a Gaussian parametric family for $Q$.

We restrict $Q$ to the centered Gaussian family
$Q_\tau:=\mathcal{N}(0,\tau^2)$, $\tau>0$, and set
\[
J_{\mathrm{G}}^\star:=\inf_{\tau>0}J(\tau),
\qquad
J(\tau):=k^2 W_2^2\bigl(\mathcal{N}(0,\sigma^2),\mathcal{N}(0,\tau^2)\bigr)
+\operatorname{mmse}\bigl(\mathcal{N}(0,\tau^2)\bigr).
\]

\begin{proposition}[Gaussian-class objective]
\label{prop:gauss-obj}
For $P=\mathcal{N}(0,\sigma^2)$ and $Q_\tau=\mathcal{N}(0,\tau^2)$,
\begin{equation}
J(\tau)=k^2(\sigma-\tau)^2+\frac{\tau^2}{1+\tau^2},\qquad\tau>0.
\label{eq:J-tau}
\end{equation}
\end{proposition}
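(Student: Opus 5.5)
The identity splits into its two terms, and I will compute each separately and then add.

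\emph{Transport term.} Since \(P=\mathcal{N}(0,\sigma^2)\) is absolutely continuous, Fact~\ref{fact:1d-ot} applies:
\[
W_2^2(P,Q_\tau)=\int_0^1\bigl(F_P^{-1}(t)-F_{Q_\tau}^{-1}(t)\bigr)^2\,dt .
\]
The quantile functions are \(F_P^{-1}(t)=\sigma\,\Phi^{-1}(t)\) and \(F_{Q_\tau}^{-1}(t)=\tau\,\Phi^{-1}(t)\), where \(\Phi\) denotes the standard normal distribution function. The integrand is therefore \((\sigma-\tau)^2\,\Phi^{-1}(t)^2\). The substitution \(t=\Phi(u)\) turns \(\int_0^1\Phi^{-1}(t)^2\,dt\) into \(\int u^2\phi(u)\,du=1\). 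Hence
\[
W_2^2(P,Q_\tau)=(\sigma-\tau)^2,
\]
which is attained by the linear map \(f(x)=(\tau/\sigma)x\). That map is exactly the monotone rearrangement \eqref{eq:monotone-rearrangement}.

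\emph{Estimation term.} For \(Y\sim\mathcal{N}(0,\tau^2)\) independent of \(N\sim\mathcal{N}(0,1)\), the pair \((Y,Z)\) is jointly Gaussian. So the conditional mean \(\mathbb{E}[Y\mid Z]\) is linear, and \(\operatorname{Var}(Y\mid Z)\) is deterministic and equal to the linear-MMSE error. The computation in the proof of Lemma~\ref{lem:mmse-bounds} gives this error as \(v/(1+v)\) with \(v=\tau^2\), so \(\operatorname{mmse}(Q_\tau)=\tau^2/(1+\tau^2)\). In other words, the upper bound of Lemma~\ref{lem:mmse-bounds} is attained by Gaussians.

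As a cross-check, Proposition~\ref{prop:mmse-fisher} gives the same value. Here \(p_Z=\mathcal{N}(0,1+\tau^2)\) has Fisher information \(1/(1+\tau^2)\), so \(1-\mathcal{I}(p_Z)=\tau^2/(1+\tau^2)\). This is also the equality case of Cram\'er--Rao noted in Corollary~\ref{cor:fisher-bounds}.

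Adding \(k^2(\sigma-\tau)^2\) and \(\tau^2/(1+\tau^2)\) yields \eqref{eq:J-tau}. There is no real obstacle. The only step that needs care is the claim that the Gaussian posterior variance is constant, so that the linear-MMSE bound holds with equality. This follows from joint Gaussianity: the error \(Y-\mathbb{E}[Y\mid Z]\) is uncorrelated with \(Z\), hence independent of \(Z\).
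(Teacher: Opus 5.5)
Your proposal is correct and follows essentially the same route as the paper. The paper computes the transport term by integrating $(x-\tfrac{\tau}{\sigma}x)^2$ against $P$ for the linear monotone map, which is your quantile integral after the substitution $t=F_P(x)$, and it obtains the estimation term from the jointly Gaussian conditional-variance formula, exactly as you do; your Fisher-information cross-check is a valid extra confirmation that the paper does not include.
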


\begin{proof}
\emph{Transport.} For centered one-dimensional Gaussians the monotone
optimal map from $\mathcal N(0,\sigma^2)$ to $\mathcal N(0,\tau^2)$ is the
linear scaling $x\mapsto(\tau/\sigma)x$, so
\[
W_2^2\bigl(\mathcal N(0,\sigma^2),\mathcal N(0,\tau^2)\bigr)
=\int\Bigl(x-\tfrac{\tau}{\sigma}x\Bigr)^2 dP
=\Bigl(1-\tfrac{\tau}{\sigma}\Bigr)^2\!\int x^2\,dP
=\Bigl(1-\tfrac{\tau}{\sigma}\Bigr)^2\sigma^2=(\sigma-\tau)^2 .
\]
\emph{Estimation.} With $Y\sim\mathcal N(0,\tau^2)$ and $Z=Y+N$,
$N\sim\mathcal N(0,1)$ independent, the pair $(Y,Z)$ is jointly Gaussian
with $\operatorname{Cov}(Y,Z)=\tau^2$ and
$\operatorname{Var}(Z)=\tau^2+1$; the Gaussian conditional-variance formula
gives, constant in $z$,
\[
\operatorname{Var}(Y\mid Z=z)
=\operatorname{Var}(Y)-\frac{\operatorname{Cov}(Y,Z)^2}{\operatorname{Var}(Z)}
=\tau^2-\frac{\tau^4}{\tau^2+1}
=\frac{\tau^2(\tau^2+1)-\tau^4}{\tau^2+1}
=\frac{\tau^2}{1+\tau^2},
\]
so $\operatorname{mmse}(Q_\tau)=\mathbb E[\operatorname{Var}(Y\mid Z)]
=\tau^2/(1+\tau^2)$. Adding the two terms gives~\eqref{eq:J-tau}.
\end{proof}

\begin{figure}[t]
\centering
\begin{tikzpicture}
\begin{axis}[width=0.7\linewidth,height=5.4cm,
  xlabel={\(\tau\)},ylabel={cost},
  domain=0:3.5,samples=200,xmin=0,xmax=3.5,ymin=0,ymax=1.6,
  legend style={at={(0.03,0.97)},anchor=north west,font=\small,draw=none,fill=none},
  clip=true]
\addplot[densely dotted,line width=1pt,black,mark=none] {0.25*(2-x)^2};
\addlegendentry{transport \(k^2(\sigma-\tau)^2\)}
\addplot[densely dashed,line width=1pt,black,mark=none] {x^2/(1+x^2)};
\addlegendentry{estimation \(\operatorname{mmse}\)}
\addplot[line width=1.4pt,black,mark=none] {0.25*(2-x)^2 + x^2/(1+x^2)};
\addlegendentry{total \(J(\tau)\)}
\addplot[densely dashed,gray,mark=none] coordinates {(1,0) (1,0.75)};
\addplot[only marks,mark=*,mark size=1.7pt,black] coordinates {(1,0.75)};
\node[font=\scriptsize,anchor=west] at (axis cs:1.1,0.08){\(\tau^\star\)};
\node[font=\scriptsize,anchor=south west] at (axis cs:1.1,0.79){\(J^\star_{\mathrm G}\)};
\end{axis}
\end{tikzpicture}
\caption{The transport-estimation trade-off in the Gaussian class
(\(\sigma=2\), \(k=0.5\)). Shrinking \(\tau\) toward \(0\) cheapens
estimation but raises the transport cost of cancelling \(X_0\); enlarging
\(\tau\) toward \(\sigma\) does the reverse. The optimal \(\tau^\star\)
balances the two (Proposition~\ref{prop:gauss-obj}).}
\label{fig:tradeoff}
\end{figure}

Writing $s:=\tau$ for the standard deviation, we characterize the
stationary points and the convexity of $J$.

\begin{proposition}[Stationarity, convexity, and non-uniqueness]
\label{prop:gauss-foc}
The infimum $J_{\mathrm{G}}^\star$ is attained at an interior point
$s^\star>0$, and every stationary point satisfies the scalar equation
\begin{equation}
k^2(\sigma-s)=\frac{s}{(1+s^2)^2},
\label{eq:FOC-s-alt}
\end{equation}
equivalently the quintic
\begin{equation}
k^2(\sigma-s)(1+2s^2+s^4)-s=0.
\label{eq:quintic-s}
\end{equation}
Moreover $J$ is strictly convex on $(0,\infty)$ when $k^2>\tfrac14$, in
which case \eqref{eq:FOC-s-alt} has a unique root and $s^\star$ is unique.
For $k^2\le\tfrac14$, $J$ may be non-convex and \eqref{eq:FOC-s-alt} may
possess several positive roots, for instance three when $k=0.2$,
$\sigma=5$ (two local minima separated by a local maximum), and the
global minimizer is selected by comparing the values $J(s)$.
\end{proposition}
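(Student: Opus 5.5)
Everything follows from the explicit formula \eqref{eq:J-tau}, $J(s)=k^2(\sigma-s)^2+s^2/(1+s^2)$, which is smooth on $[0,\infty)$, so the whole statement is one-variable calculus.

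\emph{Attainment.} First I will show the infimum over $s>0$ is attained at an interior point. As $s\to\infty$, $J(s)\ge k^2(\sigma-s)^2\to\infty$, so a minimizing sequence stays bounded. $J$ extends continuously to $s=0$. There
\[
J'(0)=-2k^2\sigma<0,
\]
so $J$ strictly decreases just to the right of $0$ and the endpoint is not a minimizer. Continuity on a compact interval $[0,R]$ then gives a minimizer $s^\star\in(0,R)$, and hence $J'(s^\star)=0$.

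\emph{First-order condition and quintic.} Next I will compute the derivative. Using
\[
\frac{d}{ds}\,\frac{s^2}{1+s^2}=\frac{2s}{(1+s^2)^2},
\]
we get $J'(s)=-2k^2(\sigma-s)+2s/(1+s^2)^2$. Setting $J'(s)=0$ gives \eqref{eq:FOC-s-alt}. Multiplying by $(1+s^2)^2=1+2s^2+s^4$ gives \eqref{eq:quintic-s}.

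\emph{Convexity for $k^2>\tfrac14$.} The second derivative is $J''(s)=2k^2+2h'(s)$ with $h(s)=s/(1+s^2)^2$. Computing,
\[
h'(s)=\frac{1-3s^2}{(1+s^2)^3}.
\]
The key step is bounding $h'$ from below on $(0,\infty)$. Substitute $u=s^2$ and minimize $(1-3u)/(1+u)^3$. Its derivative has numerator $-3(1+u)+3(3u-1)=6u-6$, so the minimum is at $u=1$, with value $-2/8=-\tfrac14$. Hence $J''(s)\ge 2k^2-\tfrac12>0$ whenever $k^2>\tfrac14$, so $J$ is strictly convex.

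Strict convexity makes $J'$ strictly increasing, so $J'$ has at most one zero. By the attainment step it has exactly one, so the root of \eqref{eq:FOC-s-alt} and the minimizer $s^\star$ are unique.

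\emph{Non-uniqueness example ($k=0.2$, $\sigma=5$).} Here I will show three positive roots by locating sign changes of $\psi(s):=k^2(\sigma-s)-h(s)$, which has the sign of $-J'(s)/2$. With $k^2=0.04$, the values are roughly:
\begin{itemize}
\item $\psi(0)=0.2>0$;
\item $\psi(0.5)=0.18-0.32<0$;
\item $\psi(2)=0.12-0.08>0$;
\item $\psi(6)=-0.04-h(6)<0$.
\end{itemize}
This gives three sign changes, hence three roots. The sign pattern of $J'$ (negative, positive, negative, positive) makes them a local minimum, a local maximum, and a local minimum. Since $k^2\le\tfrac14$ permits such multiplicity, the global minimizer must be chosen by comparing values of $J$.

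I expect the only delicate point to be the sharp bound $\min h'=-\tfrac14$, which produces the exact threshold $k^2>\tfrac14$. Everything else is routine checking.
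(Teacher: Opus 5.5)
Your proof is correct and follows the paper's argument essentially step for step: coercivity plus $J'(0^+)=-2k^2\sigma<0$ give interior attainment, and the bound $J''\ge 2k^2-\tfrac12$ comes from minimizing $(1-3s^2)/(1+s^2)^3$ at $s=1$ (the paper writes this as $(2-6s^2)/(1+s^2)^3$ with minimum $-\tfrac12$). For the $k=0.2$, $\sigma=5$ example your explicit sign checks are more detailed than the paper's ``direct evaluation,'' but three sign changes only give \emph{at least} three roots; exactly three follows because $-k^2-h'(s)$ (the derivative of your $\psi$) vanishes exactly twice on $(0,\infty)$, since $h'$ decreases to $-\tfrac14$ at $s=1$ and then increases to $0^-$, so Rolle's theorem caps the count at three.
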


\begin{proof}
With $s=\tau$, $J(s)=k^2(\sigma-s)^2+s^2/(1+s^2)$ is continuous on
$(0,\infty)$, with $J(s)\to\infty$ as $s\to\infty$ and, since
\[
J'(s)=2k^2(s-\sigma)+\frac{2s}{(1+s^2)^2},
\]
$J'(0^+)=-2k^2\sigma<0$; hence the infimum is attained at an interior
stationary point. Setting $J'(s)=0$ gives~\eqref{eq:FOC-s-alt}, and
multiplying by $(1+s^2)^2$ gives the quintic~\eqref{eq:quintic-s}.
Differentiating $J'(s)=2k^2(s-\sigma)+2s(1+s^2)^{-2}$ once more, the
quotient rule gives
\[
\frac{d}{ds}\,\frac{2s}{(1+s^2)^2}
=\frac{2(1+s^2)^2-2s\cdot2(1+s^2)\cdot2s}{(1+s^2)^4}
=\frac{2(1+s^2)-8s^2}{(1+s^2)^3}
=\frac{2-6s^2}{(1+s^2)^3},
\]
so $J''(s)=2k^2+\dfrac{2-6s^2}{(1+s^2)^3}$. The rational term
$\psi(s):=(2-6s^2)/(1+s^2)^3$ attains its minimum where
$\psi'(s)=0$; a short computation gives
$\psi'(s)=24s(s^2-1)/(1+s^2)^4$, which vanishes at $s=1$ (the only
positive root), where $\psi(1)=(2-6)/2^3=-\tfrac12$, and $\psi\to0^{\pm}$
as $s\to0,\infty$, so $\min_{s>0}\psi=-\tfrac12$. Hence
$J''(s)\ge2k^2-\tfrac12$ for all $s$, and $J''>0$ everywhere iff
$k^2>\tfrac14$, giving strict convexity and a unique minimizer. For $k^2\le\tfrac14$ the term $2-6s^2$ renders $J''$
negative near $s=1$, so $J$ can be non-convex; a direct evaluation at
$k=0.2$, $\sigma=5$ exhibits three positive roots
of~\eqref{eq:FOC-s-alt}. This residual non-convexity within the Gaussian
class already foreshadows the genuine non-convexity underlying
Witsenhausen's counterexample. Each stationary point is a root of the
explicit algebraic equation~\eqref{eq:quintic-s}, so $s^\star$ is
\emph{semi-closed}.
\end{proof}

\begin{figure}[t]
\centering
\begin{tikzpicture}
\begin{axis}[width=0.7\linewidth,height=5.4cm,
  xlabel={\(s\)},ylabel={},domain=0:5.2,samples=200,
  xmin=0,xmax=5.2,ymin=0,ymax=1.05,restrict y to domain=0:1.05,
  legend style={at={(0.98,0.98)},anchor=north east,font=\small,draw=none,fill=none},
  clip=true]
\addplot[line width=1.4pt,black,mark=none] {x/(1+x^2)^2};
\addlegendentry{\(s/(1+s^2)^2\)}
\addplot[densely dashed,line width=1pt,black,mark=none] {0.04*(5-x)};
\addlegendentry{\(k^2(\sigma-s)\), \(k=0.2\)}
\addplot[densely dotted,line width=1pt,black,mark=none] {0.25*(5-x)};
\addlegendentry{\(k^2(\sigma-s)\), \(k=0.5\)}
\addplot[densely dotted,gray,mark=none] coordinates {(0.22,0) (0.22,0.185)};
\addplot[densely dotted,gray,mark=none] coordinates {(1.5,0) (1.5,0.14)};
\addplot[densely dotted,gray,mark=none] coordinates {(4.8,0) (4.8,0.02)};
\addplot[only marks,mark=*,mark size=2.1pt,black] coordinates {(0.22,0.185) (1.5,0.14) (4.8,0.008)};
\node[font=\scriptsize,anchor=south west] at (axis cs:0.7,0.42){\(k=0.2\): three roots};
\end{axis}
\end{tikzpicture}
\caption{Stationarity condition~\eqref{eq:FOC-s-alt} as the intersection
of the bump \(s/(1+s^2)^2\) with the line \(k^2(\sigma-s)\)
(\(\sigma=5\)). For \(k=0.2\) (below the threshold \(k^2=\tfrac14\)) the
line meets the bump \emph{three} times, two local minima of \(J\)
separated by a local maximum, whereas for \(k=0.5\) there is a single
crossing (Proposition~\ref{prop:gauss-foc}). This is the Gaussian-class
signature of Witsenhausen non-convexity.}
\label{fig:foc}
\end{figure}

The scalar equation~\eqref{eq:FOC-s-alt} makes the two extreme regimes
transparent.

\begin{corollary}[Asymptotics of the Gaussian-class solution]
\label{cor:gauss-asymp}
As $k^2\to\infty$, $s^\star=\sigma+O(k^{-2})$, so the optimal Gaussian
approaches $P$. As $k^2\to0$, $s^\star=k^2\sigma+O(k^4)\to0$ and
$J(s^\star)=k^2\sigma^2+O(k^4)\to0$: the minimizer collapses toward the
point mass $\delta_0$, \emph{not} toward large variance.
\end{corollary}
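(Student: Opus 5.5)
The plan is to work directly with the scalar objective \(J(s)=k^2(\sigma-s)^2+s^2/(1+s^2)\) of Proposition~\ref{prop:gauss-obj} and the stationarity equation~\eqref{eq:FOC-s-alt}. The key quantity is the bump \(\beta(s):=s/(1+s^2)^2\), which is nonnegative on \((0,\infty)\). It is bounded above by its maximum \(\beta(1/\sqrt3)=3\sqrt3/16\), and it satisfies \(\beta(s)=s+O(s^3)\) near \(0\). The two regimes then follow from a crude a priori localization of \(s^\star\), followed by a perturbative reading of~\eqref{eq:FOC-s-alt}. By Proposition~\ref{prop:gauss-foc}, \(s^\star\) is an interior stationary point, so~\eqref{eq:FOC-s-alt} holds at \(s^\star\) in both regimes.

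\emph{Large \(k\).} For \(k^2>\tfrac14\), Proposition~\ref{prop:gauss-foc} makes \(s^\star\) the unique root of~\eqref{eq:FOC-s-alt}. Rewrite that equation as
\[
\sigma-s^\star=k^{-2}\beta(s^\star).
\]
Since \(0\le\beta\le 3\sqrt3/16\), this gives \(0\le\sigma-s^\star\le (3\sqrt3/16)\,k^{-2}\), that is, \(s^\star=\sigma+O(k^{-2})\). Consequently \(\mathcal N(0,(s^\star)^2)\to P\), for instance in \(W_2\), since \(W_2=|\sigma-s^\star|\). No further argument is needed here.

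\emph{Small \(k\).} The subtlety is that for small \(k\) the equation~\eqref{eq:FOC-s-alt} can have several roots. In particular there is one near \(s=\sigma\), where the line \(k^2(\sigma-s)\) meets the tail of the bump. So I must first show that the \emph{global} minimizer lies on the branch near \(0\).
\begin{enumerate}
\item[(a)] Comparing with \(s\to0^+\) gives \(J(s^\star)\le \lim_{s\to0^+}J(s)=k^2\sigma^2\).
\item[(b)] Hence \((s^\star)^2/(1+(s^\star)^2)\le k^2\sigma^2\), which forces \(s^\star=O(k)\to0\). This rules out the branch near \(\sigma\), whose estimation cost is of order \(\sigma^2/(1+\sigma^2)\) and does not vanish.
\item[(c)] Bootstrap through~\eqref{eq:FOC-s-alt}. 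It gives \(s^\star=k^2(\sigma-s^\star)(1+(s^\star)^2)^2\le k^2\sigma(1+O(k^2))\), so \(s^\star=O(k^2)\). Substituting back, \(s^\star=k^2\sigma-k^2s^\star+O(k^2 (s^\star)^2)=k^2\sigma+O(k^4)\).
\item[(d)] Expand the value: \(J(s^\star)=k^2\sigma^2-2k^2\sigma s^\star+k^2(s^\star)^2+(s^\star)^2+O((s^\star)^4)=k^2\sigma^2+O(k^4)\).
\end{enumerate}
Together these show the minimizer collapses to \(\delta_0\) rather than spreading out.

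The main obstacle is step (b): choosing the correct root among possibly several stationary points when \(k^2\le\tfrac14\). The cost comparison in (a) handles this cleanly, because \(J\) is bounded above by its \(s\to0^+\) limit. Everything after that is routine Taylor bookkeeping, where the only care needed is to upgrade \(O(k)\) to \(O(k^2)\) before expanding to order \(k^4\).
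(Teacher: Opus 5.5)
Your proof is correct and has the same structure as the paper's: rewrite \eqref{eq:FOC-s-alt} as \(\sigma-s=k^{-2}\,s/(1+s^2)^2\) and read it perturbatively in each regime. The two arguments differ only in how \(s^\star\) is localized before expanding.

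For large \(k\), the paper substitutes \(s=\sigma\) on the right and obtains the sharp leading term \(\sigma-s^\star=\sigma k^{-2}/(1+\sigma^2)^2+o(k^{-2})\). Your bound \(0\le\sigma-s^\star\le(3\sqrt3/16)\,k^{-2}\) is non-asymptotic and needs neither uniqueness nor a limiting argument. It gives up only the constant, which you can recover in one line: \(s^\star\to\sigma\) gives \(k^2(\sigma-s^\star)=\beta(s^\star)\to\beta(\sigma)\).

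For small \(k\), the paper just asserts that ``the balance forces \(s\to0\).'' Your comparison \(J(s^\star)\le k^2\sigma^2\) actually justifies that the global minimizer is small. It also hands you the rate \(s^\star=O(k)\) that your bootstrap in (c)--(d) needs.

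One correction to your motivation: for fixed \(\sigma\) there is \emph{no} root near \(\sigma\) as \(k\to0\). No root has \(s\ge\sigma\), because there the left side of \eqref{eq:FOC-s-alt} is \(\le0<\beta(s)\). For \(s<\sigma\) the left side is at most \(k^2\sigma\), while \(\beta\ge c_\varepsilon>0\) on \([\varepsilon,\sigma]\). So once \(k^2\sigma<c_\varepsilon\), every stationary point lies in \((0,\varepsilon)\). Taking \(\varepsilon\le1/\sqrt3\), it is also unique, since \(\beta\) is increasing there.

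This is the argument implicit in the paper, and it proves more: \emph{all} stationary points collapse to \(0\), not just the global minimizer. The three-root configuration of Proposition~\ref{prop:gauss-foc} occurs only at intermediate \(k\); for \(\sigma=5\) the outer roots are already gone at \(k=0.1\).

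Your step (b) is still valid. It even selects the correct root at moderate \(k\) where three roots coexist: for example, at \(k=0.15\), \(\sigma=5\) it gives \(s^\star<1.2\), excluding the roots near \(2.5\) and \(4.5\). But the ``branch near \(\sigma\)'' that it rules out does not survive in the \(k\to0\) limit, so you should not describe it as the main obstacle.
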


\begin{proof}
Rewrite~\eqref{eq:FOC-s-alt} as $\sigma-s=s/[k^2(1+s^2)^2]$.
\emph{Large $k$.} As $k^2\to\infty$ the right-hand side $\to0$, forcing
$s\to\sigma$; substituting $s=\sigma$ on the right gives the leading
correction $\sigma-s^\star=\sigma/[k^2(1+\sigma^2)^2]+o(k^{-2})$, i.e.\
$s^\star=\sigma-O(k^{-2})$, so the optimal Gaussian approaches $P$.
\emph{Small $k$.} As $k^2\to0$ the balance forces $s\to0$; for small $s$,
$s/(1+s^2)^2=s+O(s^3)$, so $k^2(\sigma-s)=s+O(s^3)$ gives
$s^\star=k^2\sigma/(1+k^2)+O(k^6)=k^2\sigma+O(k^4)$. Substituting into
$J(s)=k^2(\sigma-s)^2+s^2/(1+s^2)$, the transport term is
$k^2(\sigma-k^2\sigma)^2=k^2\sigma^2+O(k^4)$ and the estimation term is
$(s^\star)^2+O(s^4)=O(k^4)$, whence $J(s^\star)=k^2\sigma^2+O(k^4)$. Since
$\operatorname{mmse}(\mathcal{N}(0,\tau^2))=\tau^2/(1+\tau^2)$ is
increasing in $\tau$, small variance is what reduces the estimation cost;
the signaling benefit of a large spread, central to Witsenhausen's
counterexample, requires a non-Gaussian $Q$ and is invisible in this
family.
\end{proof}

\begin{remark}[Interpretation and limitations]
Within the Gaussian family the optimal $Q_\tau$ solves the scalar
equation~\eqref{eq:FOC-s-alt}, and the induced first controller is
\emph{linear} (both $P$ and $Q$ are Gaussian). By
Corollary~\ref{cor:gauss-asymp} this is the exact optimizer only in the
large-$k$ limit; for moderate $k$ it is a strict upper bound on
$J^\star$, since non-Gaussian perturbations of $Q$ strictly decrease the
cost (Proposition~\ref{prop:ac}). It nonetheless furnishes an explicit
one-dimensional trade-off between transport cost and MMSE and a natural
reference point for perturbation analysis around the Gaussian benchmark.
\end{remark}

\begin{remark}[Linear-optimality threshold via Hermite modes]
\label{rem:hermite}
This remark explains, in words and then in formulas, why the true
linear-optimality threshold \(k_c\) differs from the value
\(k^2=\tfrac14\) of Proposition~\ref{prop:gauss-foc}, and how the Gaussian
prior lets one compute it.

\emph{The question.} Proposition~\ref{prop:gauss-foc} asked only whether the
best Gaussian \(Q_\tau=\mathcal N(0,\tau^2)\) is a local minimum \emph{among
Gaussians}, a one-variable question in \(\tau\), answered by
\(k^2>\tfrac14\). But \(J\) is minimized over \emph{all} laws \(Q\), so the
sharper question is whether \(Q_\tau\) is a local minimum against
\emph{every} small perturbation, including those that make \(Q\)
\emph{non-Gaussian}. The largest \(k\) at which some perturbation first
lowers \(J\) is the linear-optimality threshold \(k_c\): for \(k\ge k_c\)
the linear controller is (locally) optimal, and below it a nonlinear
controller does better.

\emph{Second variation: a tug-of-war.} Perturb by a signed measure \(h\) of
total mass zero, \(Q_\varepsilon=Q_\tau+\varepsilon h\). Since \(Q_\tau\) is
a stationary point, \(J(Q_\varepsilon)=J(Q_\tau)+\tfrac12\varepsilon^2\,
\delta^2 J[h,h]+o(\varepsilon^2)\), and the \emph{sign of the second
variation} \(\delta^2 J[h,h]\) decides local optimality. Split
\(J=k^2W_2^2(P,\cdot)+\mathrm{mmse}\). The transport term is convex, so it
\emph{stabilizes} (curvature \(>0\), pulling \(Q\) back toward \(P\)); the
estimation term is concave (\(\mathrm{mmse}=1-\mathcal I\) with \(\mathcal I\)
convex), so it \emph{destabilizes} (curvature \(<0\), pushing \(Q\) away).
Because \(k^2\) multiplies only the transport curvature, large \(k\) keeps
the Gaussian stable; the Gaussian ceases to be a local minimum once, in
some direction \(h\), the estimation push overpowers the transport pull.
Concretely, writing \(\mathrm{mmse}=\mathbb E_Q[Y^2]-\Phi\) with
\(\Phi(Q)=\int g_Q^2/p_Z\) (Lemma~\ref{lem:mmse-derivative}), the identity
\((g^2/p)''=2(g_h-m\,p_h)^2/p\) integrates to the estimation curvature
\[
\delta^2\Phi[h,h]=2\int_{\mathbb R}\frac{\bigl(g_h(z)-m(z)\,p_h(z)\bigr)^2}{p_Z(z)}\,dz\ \ge0,
\qquad
g_h(z)=\!\int\! u\,h(u)\phi(z-u)\,du,\ \ p_h=h*\phi,
\]
so \(\delta^2\mathrm{mmse}=-\delta^2\Phi\le0\), confirming the signs above.

\emph{Hermite modes: a basis that diagonalizes the tug-of-war.} Rather than
test every \(h\), we choose a basis of perturbation directions in which the
second variation is \emph{diagonal} (no direction couples to another), so
stability reduces to checking one scalar inequality per direction. Because
the reference law is Gaussian, that basis is the \emph{Hermite functions}
\(h_n(y)=\mathrm{He}_n(y/\tau)\,q_\tau(y)\) (a Hermite polynomial times the
Gaussian density), the eigenfunctions of the Ornstein-Uhlenbeck operator,
orthogonal with \(\|h_n\|^2:=\int h_n^2/q_\tau=n!\). Each \(h_n\) deforms
the bell curve in one characteristic way:
\(n=1\) shifts the \emph{mean} (a pure translation);
\(n=2\) is a \emph{breathing} mode that changes the \emph{variance} (indeed
\(\partial_\tau q_\tau=\tau^{-1}h_2\)) and keeps \(Q\) Gaussian;
\(n=3\) adds \emph{skewness};
\(n=4\) is the first mode that changes the \emph{shape}, flattening the
single peak into two shoulders, the onset of \emph{bimodality}. By the
symmetry of \(P\) and \(Q_\tau\) the odd modes decouple, so only even
\(n\) can destabilize.

\emph{The eigenvalues and the stability test.} A short computation
diagonalizes both terms in this basis. Using the Gaussian-convolution
scaling identity
\(\int\mathrm{He}_n(u/\tau)q_\tau(u)\phi(z-u)\,du=\beta^{n/2}
\mathrm{He}_n(z/\rho)p_Z(z)\) (with \(\rho^2=\tau^2+1\),
\(\beta=\tau^2/\rho^2\); proved by matching Hermite generating functions)
and the recurrence
\(x\,\mathrm{He}_n=\mathrm{He}_{n+1}+n\,\mathrm{He}_{n-1}\), the
\(\mathrm{He}_{n+1}\) terms in \(g_{h_n}-m\,p_{h_n}\) cancel and one is left
with \(g_{h_n}-m\,p_{h_n}=n\,\beta^{(n-1)/2}(\tau/\rho^2)
\mathrm{He}_{n-1}(z/\rho)\,p_Z\). Hence the estimation curvature is diagonal
with eigenvalues
\[
\widehat\beta_n:=\frac{\delta^2\Phi[h_n,h_n]}{\|h_n\|^2}
=2n\,\frac{\tau^{2n}}{(\tau^2+1)^{\,n+1}}\ >0
\qquad(\text{the ``push'' in mode }n),
\]
and the transport curvature is likewise diagonal, with eigenvalues
\(\widehat\omega_n>0\) (Gaussian-moment integrals; the ``pull''). The
Gaussian is a local minimizer exactly when the pull beats the push in
\emph{every} mode:
\[
k^2\,\widehat\omega_n\ \ge\ \widehat\beta_n
\quad\Longleftrightarrow\quad
k^2\ \ge\ \frac{\widehat\beta_n}{\widehat\omega_n},
\qquad\text{for every even }n .
\]

\emph{Which mode binds, and the bifurcation.} The variance mode \(n=2\) is
the \emph{in-family} test: its inequality is precisely \(k^2\ge\tfrac14\),
recovering Proposition~\ref{prop:gauss-foc} (consistent, since
\(\partial_\tau q_\tau=\tau^{-1}h_2\)). But \(n=2\) only guards against
Gaussian competitors. The first \emph{non-Gaussian} even mode is the
quartic \(n=4\) (bimodal; \(\widehat\beta_4=8\,\tau^8/(\tau^2+1)^5\)). As
\(k\) decreases from \(\infty\) it is the first to violate its inequality,
at
\[
k_c^2=\frac{\widehat\beta_4}{\widehat\omega_4}\bigg|_{\tau=\tau^\star(k_c)} .
\]
Numerically \(k_c\approx0.56>\tfrac12\) (Figure~\ref{fig:keffect}): the
bimodal instability strikes at a \emph{larger} \(k\) than the in-family one,
so there is a window \(\tfrac12\le k<k_c\) in which \(Q_\tau\) is still the
best \emph{Gaussian} yet a bimodal perturbation already lowers \(J\), which
is exactly why \(k^2=\tfrac14\) understates the threshold. At \(k=k_c\) the
\(n=4\) eigenvalue crosses zero: the Gaussian turns from a minimum into a
saddle and splits into two symmetric bimodal minima, a \emph{pitchfork
bifurcation}, so for \(k<k_c\) the optimizer is non-Gaussian and the
controller nonlinear. Evaluating the transport eigenvalue
\(\widehat\omega_4\) in closed form (the Wasserstein Hessian of
\(W_2^2(P,\cdot)\) at \(Q_\tau\)) would pin down \(k_c\) analytically and is
left to a fuller treatment.
\end{remark}

\section{A Finite-Dimensional Program}
\label{sec:finite-program-sec}

We now reduce the variational problem~\eqref{eq:OT-variational} to a
finite-dimensional program that can be solved numerically, by restricting
to finitely supported laws. We emphasize that nothing in this section or
the next uses the MMSE-Fisher identity of \S\ref{sec:fisher}: the
estimation term enters only through \(\operatorname{mmse}\) and its first
variation (Lemma~\ref{lem:mmse-derivative}), so the computational theory
stands on the reformulation of \S\ref{sec:prelim} alone. The Fisher form
is an interpretive lens, not a prerequisite.

\subsection{The finite-level program}

We first show that restricting to finitely supported laws loses nothing in
the limit.

\begin{proposition}[Finite-level approximation]
\label{prop:finite-level}
For \(n\ge1\) set
\(J_n^\star:=\inf\{J(Q):Q\text{ supported on at most }n\text{ points}\}\).
Then \(J_n^\star\) is nonincreasing in \(n\) and \(J_n^\star\downarrow
J^\star\) as \(n\to\infty\). Thus~\eqref{eq:OT-variational} is the limit
of the finite-dimensional programs over atom locations and weights, even
though the minimizer \(Q^\star\) itself is atomless
(Proposition~\ref{prop:ac}).
\end{proposition}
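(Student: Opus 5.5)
Monotonicity is immediate: a law supported on at most \(n\) points is also supported on at most \(n+1\) points, so the feasible sets are nested and \(J_n^\star\ge J_{n+1}^\star\). Each feasible set sits inside \(\mathcal P_2(\mathbb R)\), so also \(J_n^\star\ge J^\star\) for every \(n\). The limit \(\lim_n J_n^\star\) therefore exists and is at least \(J^\star\). It remains to show \(\limsup_n J_n^\star\le J^\star\). For this I construct, from a minimizer \(Q^\star\) (Proposition~\ref{prop:existence}), explicit \(n\)-point laws \(Q_n\) with \(J(Q_n)\to J(Q^\star)=J^\star\).

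\emph{Construction.} I will use the quantile (monotone) discretization. Split \([0,1]\) into the \(n\) intervals \(I_i=((i-1)/n,i/n]\) and define \(Q_n:=\sum_{i=1}^n\frac1n\,\delta_{y_i}\), where
\[
y_i:=n\int_{I_i}F_{Q^\star}^{-1}(t)\,dt .
\]
Thus \(y_i\) is the conditional mean of \(Q^\star\) on its \(i\)-th quantile slab, and it is finite because \(Q^\star\in\mathcal P_2\). Then \(F_{Q_n}^{-1}\) is the \(L^2(0,1)\)-orthogonal projection \(\Pi_n F_{Q^\star}^{-1}\) of \(F_{Q^\star}^{-1}\) onto functions constant on each \(I_i\). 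Indeed, the \(y_i\) are nondecreasing, so the resulting step function is a genuine quantile function.

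\emph{Convergence of the transport term.} By Fact~\ref{fact:1d-ot} (or the quantile formula, valid for any pair of laws on \(\mathbb R\)),
\[
W_2(Q_n,Q^\star)=\bigl\|\Pi_nF_{Q^\star}^{-1}-F_{Q^\star}^{-1}\bigr\|_{L^2(0,1)}.
\]
This tends to \(0\) because piecewise-constant projections onto the dyadic-type partitions converge in \(L^2\) for any \(L^2\) function. The standard argument approximates \(F_{Q^\star}^{-1}\) by a continuous function and uses that \(\Pi_n\) is a contraction. By the triangle inequality, \(|W_2(P,Q_n)-W_2(P,Q^\star)|\le W_2(Q_n,Q^\star)\to0\), so \(k^2W_2^2(P,Q_n)\to k^2W_2^2(P,Q^\star)\).

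\emph{Convergence of the estimation term.} \(W_2\)-convergence implies weak convergence, with second moments \(\int y^2\,dQ_n=\|\Pi_nF^{-1}\|^2\le\int y^2\,dQ^\star\) uniformly bounded. Fact~\ref{fact:mmse} then gives \(\operatorname{mmse}(Q_n)\to\operatorname{mmse}(Q^\star)\). Hence \(J_n^\star\le J(Q_n)\to J^\star\), and combined with \(J_n^\star\ge J^\star\) this yields \(J_n^\star\downarrow J^\star\).

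The only step needing care is the \(L^2\) convergence of the projection, especially since the quantile function is unbounded near \(0\) and \(1\) for Gaussian-like \(Q^\star\). The density argument with \(\|\Pi_n\|\le1\) handles this cleanly. As a fallback, one can use any finitely supported approximation obtained from truncation plus a fine grid, since only \(W_2\)-convergence is needed. The remark that \(Q^\star\) is atomless (Proposition~\ref{prop:ac}) is not an obstruction: the infimum over \(n\)-point laws is approached but not attained at \(Q^\star\), so \(J_n^\star>J^\star\) for each \(n\) whenever \(J_n^\star\) is attained.
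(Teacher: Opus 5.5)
Your proof is correct and takes essentially the paper's route: nested feasible sets give monotonicity, inclusion in \(\mathcal P_2(\mathbb R)\) gives \(J_n^\star\ge J^\star\), and an equal-mass quantile discretization converging in \(W_2\) gives the upper bound, with the limit passed through continuity of \(W_2(P,\cdot)\) and Fact~\ref{fact:mmse}. The differences are minor. You take slab conditional means, i.e.\ the \(L^2\) projection of \(F_{Q^\star}^{-1}\); this makes the \(W_2\)-convergence and the uniform second-moment bound explicit, whereas the paper only asserts that the midpoint-quantile atoms \(F_Q^{-1}((i-1/2)/n)\) converge. You also discretize the minimizer from Proposition~\ref{prop:existence}, whereas the paper discretizes an arbitrary \(Q\) and then takes the infimum, so its argument does not need existence.
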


\begin{proof}
\emph{Monotonicity.} For each \(n\ge 1\) let
\(F_n := \{Q : |\operatorname{supp} Q| \le n\}\) denote the feasible set
defining \(J_n^\star := \inf_{Q\in F_n} J(Q)\). Two elementary facts give
the claim.

\emph{(i) The feasible sets are nested: \(F_n\subseteq F_{n+1}\).}
If \(|\operatorname{supp} Q|\le n\) then, since \(n\le n+1\), also
\(|\operatorname{supp} Q|\le n+1\); the same measure \(Q\), with its atoms
and weights unchanged, satisfies the looser constraint (no atom is moved or
added).

\emph{(ii) An infimum over a larger set is no larger.}
For any real-valued \(f\) and sets \(A\subseteq B\), \(\inf_B f\le \inf_A f\),
since \(\{f(x):x\in A\}\subseteq\{f(x):x\in B\}\). With \(A=F_n\),
\(B=F_{n+1}\), \(f=J\),
\[
  J_{n+1}^\star=\inf_{Q\in F_{n+1}}J(Q)\le\inf_{Q\in F_n}J(Q)=J_n^\star .
\]
Hence \((J_n^\star)_{n\ge1}\) is nonincreasing.

\emph{Lower bound.} Every \(Q\in F_n\) is finitely supported with finite
second moment, so \(Q\in\mathcal P_2(\mathbb R)\); therefore
\(J_n^\star\ge\inf_{Q\in\mathcal P_2(\mathbb R)}J(Q)=:J^\star\) for every
\(n\), and the sequence is bounded below by \(J^\star\).

\emph{Convergence.} A nonincreasing sequence of reals bounded below
converges to its infimum (greatest-lower-bound property of \(\mathbb R\));
hence \(J_n^\star\downarrow\inf_n J_n^\star\ge J^\star\).

\emph{Upper bound \(\inf_n J_n^\star\le J^\star\).} Fix any
\(Q\in\mathcal P_2(\mathbb R)\). Finitely supported measures are
\(W_2\)-dense in \(\mathcal P_2(\mathbb R)\); concretely, the \(n\)-point
quantile discretization \(\nu_n:=\tfrac1n\sum_{i=1}^n\delta_{F_Q^{-1}((i-1/2)/n)}\)
satisfies \(\nu_n\to Q\) in \(W_2\). Then
\(W_2^2(P,\nu_n)\to W_2^2(P,Q)\) (continuity of \(W_2\) under
\(W_2\)-convergence) and, since \(W_2\)-convergence implies weak
convergence with bounded second moments, Fact~\ref{fact:mmse} gives
\(\operatorname{mmse}(\nu_n)\to\operatorname{mmse}(Q)\); hence
\(J(\nu_n)\to J(Q)\). As \(\nu_n\) has at most \(n\) atoms,
\(J_n^\star\le J(\nu_n)\), so \(\inf_n J_n^\star\le\lim_n J(\nu_n)=J(Q)\).
Minimizing over \(Q\) gives \(\inf_n J_n^\star\le J^\star\).

Combining the two bounds, \(J_n^\star\downarrow J^\star\).
\end{proof}

Two families of quantities specify an \(n\)-level controller: the \(n\)
output \emph{levels} \(y_1<\dots<y_n\) (the values that \(Y\) may take),
and the \emph{partition} of the state space into \(n\) \emph{decision
regions} (which level each realization of \(X_0\) is mapped to). In one
dimension the \(W_2\)-optimal transport is monotone
(Fact~\ref{fact:1d-ot}), so each decision region is an interval
\(I_i=(\zeta_{i-1},\zeta_i]\); the partition is therefore fixed by the
\(n-1\) \emph{thresholds} \(\zeta_1<\dots<\zeta_{n-1}\), or equivalently by
the atom weights \(w_i=P(I_i)\) through \(\zeta_i=F_P^{-1}(W_i)\),
\(W_i=\sum_{j\le i}w_j\). \textbf{The finite-level program optimizes over
both families, the \(n\) levels and the \(n-1\) thresholds
(equivalently, weights), \(2n-1\) parameters in all.} We now make the
objective explicit for Gaussian \(P\). Order the atoms \(y_1<\dots<y_n\),
take weights \(w_i\ge0\) with \(\sum_i w_i=1\), and form
\(W_i=\sum_{j\le i}w_j\) (\(W_0=0\)) and the quantile thresholds
\[
\zeta_i:=F_P^{-1}(W_i)=\sigma\,\Phi^{-1}(W_i),
\qquad \zeta_0=-\infty,\ \zeta_n=+\infty,
\]
with \(\Phi\) the standard normal CDF and \(\Phi_\sigma,\phi_\sigma\) the
CDF and density of \(P=\mathcal{N}(0,\sigma^2)\). On the bin
\(I_i:=(\zeta_{i-1},\zeta_i]\) the mass and (unnormalized) mean of \(P\)
are, in closed form,
\[
w_i=\Phi_\sigma(\zeta_i)-\Phi_\sigma(\zeta_{i-1}),
\qquad
\mu_i:=\int_{I_i}x\,dP=\sigma^2\bigl(\phi_\sigma(\zeta_{i-1})-\phi_\sigma(\zeta_i)\bigr),
\qquad
\bar x_i:=\frac{\mu_i}{w_i}.
\]

\begin{proposition}[The finite-level program]
\label{prop:finite-program}
For \(P=\mathcal{N}(0,\sigma^2)\) and \(Q_n=\sum_{i=1}^n w_i\delta_{y_i}\),
\begin{equation}
J(Q_n)=
k^2\underbrace{\Bigl[\bigl(\sigma^2-\textstyle\sum_i w_i\bar x_i^2\bigr)
+\sum_i w_i\,(y_i-\bar x_i)^2\Bigr]}_{W_2^2(P,Q_n)}
+\underbrace{\Bigl[\sum_i w_i y_i^2-\int_{\mathbb{R}}\frac{g(z)^2}{p_Z(z)}\,dz\Bigr]}_{\operatorname{mmse}(Q_n)},
\label{eq:finite-program}
\end{equation}
where \(p_Z(z)=\sum_i w_i\phi(z-y_i)\) and
\(g(z)=\sum_i w_i y_i\phi(z-y_i)\). Hence
\[
J_n^\star=\min\ J(Q_n)\quad\text{over the }n\text{ levels }y_1<\dots<y_n
\ \text{and the }n-1\text{ thresholds }\zeta_1<\dots<\zeta_{n-1}
\]
(equivalently the weights \(w\in\Delta_{n-1}\)): a \emph{smooth, nonconvex}
program in \(2n-1\) variables, whose transport part is closed-form and
whose estimation part is a single one-dimensional Gaussian-mixture
integral (smooth, with closed-form gradient). The levels \(y_i\) are the
controller's outputs and the bins \(I_i\) its decision regions, giving the
\(n\)-level first controller
\[
f_n(x)=\sum_{i=1}^n y_i\,\mathbf 1_{I_i}(x).
\]
\end{proposition}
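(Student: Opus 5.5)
The plan is to compute the two terms of \(J(Q_n)=k^2W_2^2(P,Q_n)+\operatorname{mmse}(Q_n)\) separately, and then read off the parametrization and the controller from Fact~\ref{fact:1d-ot}.

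\emph{Transport term.} Since \(P\) is absolutely continuous, Fact~\ref{fact:1d-ot} says the optimal coupling is the monotone map \(T=F_{Q_n}^{-1}\circ F_P\). For ordered atoms \(y_1<\dots<y_n\) with cumulative weights \(W_i\), this map sends \(x\) to \(y_i\) exactly when \(F_P(x)\in(W_{i-1},W_i]\), that is, when \(x\in I_i=(\zeta_{i-1},\zeta_i]\) with \(\zeta_i=F_P^{-1}(W_i)\). Hence \(T=f_n\) and
\[
W_2^2(P,Q_n)=\sum_i\int_{I_i}(x-y_i)^2\,dP .
\]
On each bin I will expand around the conditional mean \(\bar x_i\). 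This gives
\[
\int_{I_i}(x-y_i)^2\,dP=\int_{I_i}x^2\,dP-w_i\bar x_i^2+w_i(y_i-\bar x_i)^2 .
\]
Summing over \(i\), and using \(\sum_i\int_{I_i}x^2\,dP=\sigma^2\) (the bins partition \(\mathbb R\) up to null sets, and \(P\) is centered), yields the bracketed expression in~\eqref{eq:finite-program}. For the closed forms of \(w_i\) and \(\mu_i\) I will use \(x\phi_\sigma(x)=-\sigma^2\phi_\sigma'(x)\). Integrating over \(I_i\) then gives \(\mu_i=\sigma^2(\phi_\sigma(\zeta_{i-1})-\phi_\sigma(\zeta_i))\), with the convention \(\phi_\sigma(\pm\infty)=0\).

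\emph{Estimation term.} As in Lemma~\ref{lem:mmse-derivative}, I will write \(\operatorname{mmse}(Q)=\mathbb E[Y^2]-\mathbb E[m(Z)^2]\). This identity follows from \(\mathbb E[\operatorname{Var}(Y\mid Z)]=\mathbb E[Y^2]-\mathbb E[\mathbb E[Y\mid Z]^2]\) via the tower property. For \(Q_n\) the law of \(Z\) has density \(p_Z(z)=\sum_iw_i\phi(z-y_i)\). Bayes' rule gives \(m(z)=g(z)/p_Z(z)\) with \(g(z)=\sum_iw_iy_i\phi(z-y_i)\), so
\[
\mathbb E[m(Z)^2]=\int\frac{g^2}{p_Z}\,dz ,
\]
while \(\mathbb E[Y^2]=\sum_iw_iy_i^2\). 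The integral is finite because \(|m|\le\max_i|y_i|\). This is the only place where the atomic setting needs any justification: \(Q_n\) has no density, but the derivation only uses the mixture representation of \(p_Z\).

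\emph{Parametrization and the program.} The last step, and the only point needing care, is to identify \(J_n^\star\) with a minimum over \(2n-1\) parameters. Any \(Q\) with at most \(n\) atoms can be written with ordered distinct levels and weights in the simplex, allowing some weights to be zero. Conversely, every choice of weights determines thresholds through \(\zeta_i=\sigma\Phi^{-1}(W_i)\), and this correspondence is a bijection between \(\Delta_{n-1}\) and nondecreasing threshold vectors. So the infimum defining \(J_n^\star\) is the infimum of~\eqref{eq:finite-program} over \((y,w)\).

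Two properties remain. Smoothness holds because \(\Phi_\sigma\) and \(\phi_\sigma\) are smooth and the integrand \(g^2/p_Z\) is smooth in the parameters with Gaussian-dominated derivatives, so differentiation under the integral is valid. Nonconvexity already shows up in Proposition~\ref{prop:gauss-foc}. Calling the program a ``min'' rather than an infimum needs more care, since coalescing levels or vanishing weights could make the infimum fail to be attained. I would handle this by compactness: coercivity of the transport term in \(y\) confines the levels to a bounded set, and the closure of the ordered domain only produces laws with fewer atoms, which remain feasible. Finally, the controller \(f_n=T\) was identified in the first step.
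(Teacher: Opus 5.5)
Your derivation of \eqref{eq:finite-program} is correct and follows the paper's proof essentially step for step. The monotone coupling sends each bin \(I_i\) to \(y_i\). Completing the square around \(\bar x_i\), with \(\sum_i\int_{I_i}x^2\,dP=\sigma^2\), gives the transport bracket. The Gaussian-mixture and Bayes computation gives \(\operatorname{mmse}(Q_n)=\sum_iw_iy_i^2-\int g^2/p_Z\); you pass through \(\mathbb E[m(Z)^2]\) instead of integrating \(\mathbb E[Y^2\mid Z]\) against \(p_Z\), but that is the same step. The paper's proof stops there, adding only a remark on smoothness on the interior \(0<W_1<\dots<W_{n-1}<1\), which is also the only place your smoothness claim holds. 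It never justifies the words ``min'' or ``nonconvex'', and the two arguments you add for these need repair.

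\emph{Attainment.} You are right that the closure of the strictly ordered domain must be admitted; the paper's own computations show levels merging. However, the transport term does not confine the levels. It only bounds \(\sum_iw_i(y_i-\bar x_i)^2\), so a level whose weight tends to \(0\) can run off to infinity at bounded cost. Moreover, the tail barycenter \(\bar x_n\) diverges anyway as \(w_n\to0\). So the parameters of a minimizing sequence need not converge. Argue with the laws instead, as in Proposition~\ref{prop:existence}:
\begin{enumerate}
\item[(i)] A minimizing sequence in \(\{Q:|\operatorname{supp}Q|\le n\}\) has bounded second moments, hence a weakly convergent subsequence.
\item[(ii)] The limit still has at most \(n\) atoms: if it charged \(n+1\) disjoint open intervals, the portmanteau theorem would force the approximants to charge them too, eventually.
\item[(iii)] Weak lower semicontinuity of \(W_2^2(P,\cdot)\), together with Fact~\ref{fact:mmse}, makes the limit a minimizer.
\end{enumerate}

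\emph{Nonconvexity.} Proposition~\ref{prop:gauss-foc} concerns the Gaussian family, which is disjoint from the finite-level domain, so it proves nothing about this program. A direct witness is the line \(y=(-a,a)\), \(w=(\tfrac12,\tfrac12)\). Along it, \(\operatorname{mmse}(Q_n)\) is smooth, vanishes at \(a=0\) and as \(a\to\infty\), and is positive in between, so it has negative second derivative somewhere. Since \(k^2W_2^2(P,Q_n)=k^2(\sigma^2-2a\sigma\sqrt{2/\pi}+a^2)\) contributes only curvature \(2k^2\) there, \(J\) is nonconvex for small \(k\).
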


\begin{proof}
\emph{Transport term.} Since \(P\) is absolutely continuous and the atoms
are ordered, Fact~\ref{fact:1d-ot} makes the monotone coupling optimal: it
sends the entire bin \(I_i\) (of \(P\)-mass \(w_i\)) to \(y_i\). Expanding
the square,
\[
W_2^2(P,Q_n)=\sum_i\int_{I_i}(x-y_i)^2\,dP
=\sum_i\Bigl(\underbrace{\textstyle\int_{I_i}x^2\,dP}_{m_2^i}
-2y_i\underbrace{\textstyle\int_{I_i}x\,dP}_{\mu_i}
+y_i^2\underbrace{\textstyle\int_{I_i}dP}_{w_i}\Bigr).
\]
Since the bins partition \(\mathbb{R}\), \(\sum_i m_2^i=\int x^2\,dP
=\sigma^2\). Writing \(\mu_i=w_i\bar x_i\) and completing the square in
each summand,
\[
m_2^i-2y_i\mu_i+y_i^2 w_i
=m_2^i-w_i\bar x_i^2+w_i\bigl(y_i^2-2y_i\bar x_i+\bar x_i^2\bigr)
=m_2^i-w_i\bar x_i^2+w_i(y_i-\bar x_i)^2 ;
\]
summing over \(i\) and using \(\sum_i m_2^i=\sigma^2\) gives
\(W_2^2(P,Q_n)=\sigma^2-\sum_i w_i\bar x_i^2+\sum_i w_i(y_i-\bar x_i)^2\).
Finally \(\mu_i=\int_{\zeta_{i-1}}^{\zeta_i}x\,\phi_\sigma(x)\,dx
=-\sigma^2\!\int_{\zeta_{i-1}}^{\zeta_i}\phi_\sigma'(x)\,dx
=\sigma^2\bigl(\phi_\sigma(\zeta_{i-1})-\phi_\sigma(\zeta_i)\bigr)\),
using \(x\,\phi_\sigma(x)=-\sigma^2\phi_\sigma'(x)\).

\emph{Estimation term.} For \(Y\sim Q_n\), the observation \(Z=Y+N\) has
the Gaussian-mixture density \(p_Z(z)=\sum_i w_i\phi(z-y_i)\). By Bayes'
rule the posterior weights are
\(\Pr(Y=y_i\mid Z=z)=w_i\phi(z-y_i)/p_Z(z)\), so the posterior moments are
\[
\mathbb{E}[Y\mid Z=z]=\frac{\sum_i w_i y_i\phi(z-y_i)}{p_Z(z)}=\frac{g(z)}{p_Z(z)},
\qquad
\mathbb{E}[Y^2\mid Z=z]=\frac{\sum_i w_i y_i^2\phi(z-y_i)}{p_Z(z)}.
\]
Using \(\operatorname{Var}(Y\mid Z)=\mathbb E[Y^2\mid Z]-\mathbb E[Y\mid Z]^2\)
and integrating against \(p_Z\),
\[
\operatorname{mmse}(Q_n)
=\int\mathbb E[Y^2\mid Z=z]\,p_Z(z)\,dz-\int\Bigl(\tfrac{g(z)}{p_Z(z)}\Bigr)^2 p_Z(z)\,dz .
\]
The first integral is \(\int\sum_i w_i y_i^2\phi(z-y_i)\,dz=\sum_i w_i y_i^2\)
(since \(\int\phi=1\); equivalently \(\mathbb E[\mathbb E[Y^2\mid Z]]=\mathbb E[Y^2]\)),
and the second is \(\int g^2/p_Z\). Hence
\(\operatorname{mmse}(Q_n)=\sum_i w_i y_i^2-\int g^2/p_Z\). Adding the two
terms gives~\eqref{eq:finite-program}. The program has the \(2n-1\) free
parameters \((y,w)\) noted above; on the interior \(\{0<W_1<\dots<W_{n-1}<1\}\)
the thresholds \(\zeta_i\) and moments \(w_i,\mu_i\) are smooth in \(w\),
and the mixture integral \(\int g^2/p_Z\) is smooth in \((y,w)\) because
\(p_Z>0\); and \(f_n\) is exactly the monotone transport map realizing
\(Q_n\).
\end{proof}

\begin{remark}[The second controller as a soft nearest-level rule]
\label{rem:softmax}
Cancelling the common factor \((2\pi)^{-1/2}e^{-z^2/2}\) between numerator
and denominator of \(m=g/p_Z\) puts the second controller in \emph{softmax}
(Boltzmann) form,
\begin{equation}
m(z)=\sum_j p_j(z)\,y_j,
\qquad
p_j(z)=\frac{w_j\,e^{\,z y_j-\frac12 y_j^2}}{\sum_k w_k\,e^{\,z y_k-\frac12 y_k^2}}
=\Pr(Y=y_j\mid Z=z),
\label{eq:softmax}
\end{equation}
a convex combination of the levels by the posterior \emph{responsibilities}
of the Gaussian mixture, with logits \(\ell_j(z)=z\,y_j-\tfrac12 y_j^2+\log w_j\)
\emph{linear} in \(z\). Thus \(m\) is a smooth, monotone \emph{soft
nearest-level} interpolation: nearly flat at \(y_j\) inside each decision
cell (one responsibility \(\approx1\)) with smooth transitions across the
thresholds where two responsibilities trade off, the smooth counterpart of
the staircase \(f_n\). Equivalently, by Tweedie's formula
(Proposition~\ref{prop:mmse-fisher}), \(m(z)=z+(\log p_Z)'(z)\). In
computation the \(p_j(z)\) are formed by log-sum-exp, subtract
\(\max_j\ell_j(z)\) before exponentiating, to avoid overflow when the
levels are far apart.
\end{remark}

\subsection{Equal-mass programs and the discrete Euler-Lagrange equation}

The most transparent special case fixes equal weights; its analysis
mirrors, term by term, the continuous Euler-Lagrange theory
(Theorem~\ref{thm:EL}).

\begin{corollary}[Equal-mass reduction]
\label{cor:equalmass}
Fix \(w_i\equiv1/n\). Then the bins \(\zeta_i=\sigma\Phi^{-1}(i/n)\) and
the barycenters \(\bar x_i\) are constants, and
Program~\ref{prop:finite-program} reduces to the minimization over the
single vector of levels \(y\in\mathbb{R}^n\) of the smooth function
\begin{equation}
J_n^{\mathrm{eq}}(y)
=k^2\Bigl(c_n+\tfrac1n\textstyle\sum_i(y_i-\bar x_i)^2\Bigr)
+\operatorname{mmse}(Q_n^{y}),
\qquad c_n:=\sigma^2-\tfrac1n\textstyle\sum_i\bar x_i^2,
\label{eq:eqmass-obj}
\end{equation}
where \(Q_n^y=\tfrac1n\sum_i\delta_{y_i}\). Writing
\(J_n^{\mathrm{eq},\star}:=\min_{y}J_n^{\mathrm{eq}}(y)\), one has
\(J_n^{\mathrm{eq},\star}\ge J_n^\star\),
\(J_{2n}^{\mathrm{eq},\star}\le J_n^{\mathrm{eq},\star}\), and
\(J_n^{\mathrm{eq},\star}\to J^\star\) as \(n\to\infty\).
\end{corollary}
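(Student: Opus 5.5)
The plan is to treat the corollary as a specialization of Proposition~\ref{prop:finite-program} and then compare feasible sets. I will prove the four claims in order: the reduction formula, the bound \(J_n^{\mathrm{eq},\star}\ge J_n^\star\), monotonicity along doublings, and convergence.

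\emph{Reduction formula.} Fix \(w_i\equiv1/n\). Then \(W_i=i/n\), so \(\zeta_i=\sigma\Phi^{-1}(i/n)\), \(\mu_i\) and \(\bar x_i=n\mu_i\) do not depend on \(y\). Substituting into~\eqref{eq:finite-program}, the transport bracket becomes \(c_n+\tfrac1n\sum_i(y_i-\bar x_i)^2\), which gives~\eqref{eq:eqmass-obj}.

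One technical point needs care. Proposition~\ref{prop:finite-program} was derived for ordered levels \(y_1<\dots<y_n\). For an arbitrary \(y\in\mathbb R^n\) the monotone coupling pairs the sorted levels with the bins. Since the bins carry equal mass, sorting can only lower \(\sum_i(y_i-\bar x_i)^2\) (rearrangement inequality, because the \(\bar x_i\) are increasing), while \(\operatorname{mmse}(Q_n^y)\) is unchanged. Two consequences follow:
\begin{itemize}
\item the minimum of~\eqref{eq:eqmass-obj} over \(\mathbb R^n\) is attained at sorted \(y\), where the formula is exact;
\item for any \(y\), \(J_n^{\mathrm{eq}}(y)\ge J(Q_n^y)\).
\end{itemize}
Existence of the minimum follows from coercivity: the transport term grows quadratically and \(0\le\operatorname{mmse}<1\) by Lemma~\ref{lem:mmse-bounds}. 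Continuity of \(\operatorname{mmse}\) in \(y\) comes from Fact~\ref{fact:mmse}. Coincident levels cause no trouble, since \(Q_n^y\) simply has fewer than \(n\) atoms.

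\emph{The bound \(J_n^{\mathrm{eq},\star}\ge J_n^\star\).} Each \(Q_n^y\) has at most \(n\) atoms. By the second consequence above, \(J_n^{\mathrm{eq}}(y)\ge J(Q_n^y)\ge J_n^\star\) for every \(y\), and taking the infimum over \(y\) gives the claim.

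\emph{Doubling monotonicity.} Given sorted \(y\in\mathbb R^n\), let \(y'\in\mathbb R^{2n}\) repeat each \(y_i\) twice. Then \(Q_{2n}^{y'}=Q_n^y\), so the \(\operatorname{mmse}\) terms agree. The bins of the \(2n\)-grid refine those of the \(n\)-grid: bin \(i\) splits into two halves of mass \(1/(2n)\) each, both sent to \(y_i\). The transport cost is therefore the same integral \(\sum_i\int_{I_i}(x-y_i)^2\,dP\), i.e.\ \(J_{2n}^{\mathrm{eq}}(y')=J_n^{\mathrm{eq}}(y)\). Minimizing over \(y\) yields \(J_{2n}^{\mathrm{eq},\star}\le J_n^{\mathrm{eq},\star}\).

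\emph{Convergence.} This is the main step, and I would copy the upper-bound argument of Proposition~\ref{prop:finite-level}. Fix the minimizer \(Q^\star\) from Proposition~\ref{prop:existence} and take \(y_i=F_{Q^\star}^{-1}((i-\tfrac12)/n)\). Then \(\nu_n=Q_n^y\to Q^\star\) in \(W_2\). Because \(y\) is sorted and the weights are equal, \(J_n^{\mathrm{eq}}(y)=J(\nu_n)\) exactly. Continuity of \(W_2^2(P,\cdot)\) and of \(\operatorname{mmse}\) (Fact~\ref{fact:mmse}) gives \(J(\nu_n)\to J(Q^\star)=J^\star\).

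Combining the bounds,
\[
J^\star\le J_n^\star\le J_n^{\mathrm{eq},\star}\le J(\nu_n)\to J^\star,
\]
where the first inequality is Proposition~\ref{prop:finite-level}. Hence \(J_n^{\mathrm{eq},\star}\to J^\star\) along all \(n\), not only along doublings.

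The only delicate ingredient is the standard fact that equal-mass quantile discretizations converge in \(W_2\) for \(Q^\star\in\mathcal P_2\). This follows from the quantile representation in Fact~\ref{fact:1d-ot} together with dominated convergence using \(\int_0^1(F_{Q^\star}^{-1})^2\,dt<\infty\).
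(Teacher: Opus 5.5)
Your proof is correct and follows the paper's argument essentially step for step. You specialize Program~\ref{prop:finite-program} to \(w_i\equiv1/n\), compare feasible sets to get \(J_n^{\mathrm{eq},\star}\ge J_n^\star\), duplicate atoms for the doubling bound, and pass to the limit with equal-mass quantile discretizations as in Proposition~\ref{prop:finite-level}. Your additions fill in points the paper leaves implicit: the rearrangement argument showing that unsorted \(y\) can only overestimate \(J(Q_n^y)\), coercivity to guarantee the minimum is attained, the check that the \(2n\)-bins refine the \(n\)-bins so the transport cost is unchanged, and the explicit sandwich \(J^\star\le J_n^\star\le J_n^{\mathrm{eq},\star}\le J(\nu_n)\).
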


\begin{proof}
Setting \(w_i=1/n\) in~\eqref{eq:finite-program}
gives~\eqref{eq:eqmass-obj}. The bound
\(J_n^{\mathrm{eq},\star}\ge J_n^\star\) holds because equal-mass laws are
admissible in Program~\ref{prop:finite-program}; duplicating each atom
exhibits every equal-mass \(n\)-atom law as an equal-mass \(2n\)-atom law,
whence \(J_{2n}^{\mathrm{eq},\star}\le J_n^{\mathrm{eq},\star}\). Finally,
for any \(Q\in\mathcal P_2(\mathbb R)\) the equal-mass quantile
discretization \(\tfrac1n\sum_i\delta_{F_Q^{-1}((i-1/2)/n)}\) converges to
\(Q\) in \(W_2\); as in Proposition~\ref{prop:finite-level} this yields
\(\limsup_n J_n^{\mathrm{eq},\star}\le J(Q)\), and minimizing over \(Q\)
gives \(J_n^{\mathrm{eq},\star}\to J^\star\).
\end{proof}

\begin{figure}[t]
\centering
\begin{tikzpicture}
\begin{axis}[width=0.80\linewidth,height=4.6cm,axis lines=left,
  xlabel={\(x\) (initial state \(X_0\))},
  xmin=-13,xmax=13,ymin=0,ymax=0.094,ytick=\empty,clip=false]
\addplot[draw=none,fill=black!12,domain=-5.338:-2.830,samples=25]{exp(-x^2/50)/12.533}\closedcycle;
\addplot[draw=none,fill=black!12,domain=-0.9:0.9,samples=25]{exp(-x^2/50)/12.533}\closedcycle;
\addplot[draw=none,fill=black!12,domain=2.830:5.338,samples=25]{exp(-x^2/50)/12.533}\closedcycle;
\addplot[very thick,domain=-13:13,samples=140]{exp(-x^2/50)/12.533};
\addplot[densely dashed,gray] coordinates {(-5.338,0) (-5.338,0.088)};
\addplot[densely dashed,gray] coordinates {(-2.830,0) (-2.830,0.088)};
\addplot[densely dashed,gray] coordinates {(-0.900,0) (-0.900,0.088)};
\addplot[densely dashed,gray] coordinates {(0.900,0) (0.900,0.088)};
\addplot[densely dashed,gray] coordinates {(2.830,0) (2.830,0.088)};
\addplot[densely dashed,gray] coordinates {(5.338,0) (5.338,0.088)};
\addplot[only marks,mark=*,mark size=1.4pt] coordinates
  {(-7.898,0) (-3.999,0) (-1.842,0) (0,0) (1.842,0) (3.999,0) (7.898,0)};
\node[font=\scriptsize] at (axis cs:0,0.045){\(\tfrac1n\)};
\node[font=\scriptsize,anchor=south] at (axis cs:-3.999,0.006){\(\bar x_i\)};
\node[font=\scriptsize,gray,anchor=south] at (axis cs:-2.830,0.089){\(\zeta_i\)};
\end{axis}
\end{tikzpicture}
\caption{The equal-mass reduction (Corollary~\ref{cor:equalmass}) for
\(n=7\), \(\sigma=5\). The prior \(P=\mathcal{N}(0,\sigma^2)\) is split
into \(n\) bins of equal probability \(1/n\) by the fixed quantile
thresholds \(\zeta_i=\sigma\Phi^{-1}(i/n)\) (dashed), narrow where \(P\)
is dense, wide in the tails. Only the \(n\) levels \(y_i\) (one per bin)
remain free; the transport pull of~\eqref{eq:discrete-EL} draws each level
toward its bin barycenter \(\bar x_i\) (dots, the Lloyd-Max choice),
while the estimation push spreads them outward
(Proposition~\ref{prop:discreteEL}).}
\label{fig:eqmass}
\end{figure}

\begin{proposition}[Discrete Euler-Lagrange equation]
\label{prop:discreteEL}
Every stationary point \(y\) of~\eqref{eq:eqmass-obj} satisfies, for each
\(i\),
\begin{equation}
\underbrace{\tfrac{2k^2}{n}\,(y_i-\bar x_i)}_{\text{transport pull}}
+\underbrace{\partial_{y_i}\!\operatorname{mmse}(Q_n^y)}_{\text{estimation push}}=0,
\qquad
\partial_{y_i}\!\operatorname{mmse}(Q_n^y)
=\tfrac1n\,\mathbb{E}_N\!\big[\,2\,e_i(N)+N\,e_i(N)^2\,\big],
\label{eq:discrete-EL}
\end{equation}
where \(N\sim\mathcal N(0,1)\), \(m(z)=\mathbb E[Y\mid Z=z]\) is the
posterior mean of the mixture \(Q_n^y*\phi\), and
\(e_i(N):=y_i-m(y_i+N)\) is the estimation residual at level \(y_i\).
Equation~\eqref{eq:discrete-EL} is the discrete counterpart of the
differential condition~\eqref{eq:EL-differential}, the displacement
\(y_i-\bar x_i\) playing the role of \(y-T^\star(y)\): the transport pull
draws each level toward its centroid \(\bar x_i\), while the estimation
push spreads the levels apart so that \(Q_n^y*\phi\) has larger Fisher
information (Corollary~\ref{cor:fisher-var}).
\end{proposition}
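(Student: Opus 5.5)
The plan is to differentiate the two terms of \(J_n^{\mathrm{eq}}\) in \eqref{eq:eqmass-obj} separately and set the sum to zero. The transport part is immediate: with equal weights, \(c_n\) and the barycenters \(\bar x_i\) do not depend on \(y\) (Corollary~\ref{cor:equalmass}). So \(\partial_{y_i}\) of \(k^2\bigl(c_n+\tfrac1n\sum_j(y_j-\bar x_j)^2\bigr)\) is \(\tfrac{2k^2}{n}(y_i-\bar x_i)\), which is the transport pull. All the work is in computing \(\partial_{y_i}\operatorname{mmse}(Q_n^y)\) and putting it into the stated residual form.

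For the estimation term I will write the MMSE as an average over atoms of the squared residual,
\[
\operatorname{mmse}(Q_n^y)=\tfrac1n\sum_{j=1}^n\mathbb E_N\bigl[(y_j-m_y(y_j+N))^2\bigr],
\]
where \(m_y=g/p_Z\) is the posterior mean of Proposition~\ref{prop:finite-program}; it depends on \(y\) both through its argument and through its functional form. Differentiating in \(y_i\) produces two kinds of terms.

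\emph{(a) Terms from varying the function \(m_y\).} These sum to \(-\tfrac2n\sum_j\mathbb E[(y_j-m(y_j+N))\,\partial_{y_i}m_y(y_j+N)]=-2\,\mathbb E[(Y-m(Z))\,h(Z)]\) with \(h=\partial_{y_i}m_y\). This vanishes by the orthogonality principle already used in Lemma~\ref{lem:mmse-reduction}; it is an envelope/Danskin effect, since \(m\) minimizes the error.

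\emph{(b) The explicit dependence in the \(j=i\) summand.} This gives \(\tfrac1n\mathbb E[2e_i(N)(1-m'(y_i+N))]\).

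It remains to convert \(-\tfrac2n\mathbb E[e_i(N)\,m'(y_i+N)]\) into \(\tfrac1n\mathbb E[N e_i(N)^2]\). For this I observe that \(\tfrac{d}{dN}e_i(N)^2=-2e_i(N)\,m'(y_i+N)\), and apply Gaussian integration by parts (Stein's identity \(\mathbb E[\psi'(N)]=\mathbb E[N\psi(N)]\)) with \(\psi=e_i^2\). Combining (a) and (b) gives \(\partial_{y_i}\operatorname{mmse}=\tfrac1n\mathbb E_N[2e_i(N)+Ne_i(N)^2]\), and stationarity of \(J_n^{\mathrm{eq}}\) is then exactly \eqref{eq:discrete-EL}.

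The main obstacle is the rigor of the interchanges: differentiating under \(\mathbb E_N\), swapping \(\partial_{y_i}\) with the expectation in (a), and the boundary terms in Stein's identity. These are handled by the softmax form \eqref{eq:softmax}. It shows that \(m\) is a convex combination of the levels, hence bounded by \(\max_j|y_j|\). Its derivative is \(m'=\operatorname{Var}(Y\mid Z)\) (Step~2 of Proposition~\ref{prop:mmse-fisher}), bounded by \((\max_j|y_j|)^2\), and \(\partial_{y_i}m_y\) is likewise polynomially bounded in \(z\). So every integrand is dominated by a polynomial times \(\phi\), which justifies dominated convergence and kills the boundary terms.

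The interpretive correspondence with \eqref{eq:EL-differential} needs no separate proof. It follows by matching \(y_i-\bar x_i\) with \(y-T^\star(y)\): on the \(i\)-th equal-mass cell the monotone map sends the cell's barycenter to \(y_i\).
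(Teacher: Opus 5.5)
Your proof is correct and rests on the same envelope/orthogonality step as the paper's: the variation through the function \(m\) drops out because \(m\) is the conditional mean. The difference is only in which variable you integrate over. The paper writes the \(j=i\) summand as \(\int(y_i-m(z))^2\phi(z-y_i)\,dz\), so differentiating the kernel gives the score factor \((z-y_i)=N\) directly. Your \(N\)-coordinates instead put \(y_i\) inside \(m(y_i+N)\), so you need Stein's identity to convert \(-2e_i m'\) into \(Ne_i^2\). That integration by parts is exactly what the change of variables \(z=y_i+N\) does, so the two computations are the same; your softmax-based domination bounds also supply the interchange justifications that the paper leaves implicit.
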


\begin{proof}
Stationarity of~\eqref{eq:eqmass-obj} reads
\(\tfrac{2k^2}{n}(y_i-\bar x_i)+\partial_{y_i}\operatorname{mmse}=0\). For
the estimation gradient, use the variational form
\(\operatorname{mmse}(Q)=\min_\eta\mathbb E_Q[(Y-\eta(Z))^2]\), attained at
\(\eta=m\). By the envelope theorem we may hold \(\eta=m\) fixed and
differentiate only the explicit dependence of
\(\mathbb E_{Q_n^y}[(Y-m(Z))^2]
=\tfrac1n\sum_j\int(y_j-m(z))^2\phi(z-y_j)\,dz\) on \(y_i\); only the
\(j=i\) summand contributes, and with
\(\partial_{y_i}\phi(z-y_i)=(z-y_i)\phi(z-y_i)\),
\[
\partial_{y_i}\operatorname{mmse}
=\tfrac1n\!\int\!\phi(z-y_i)\big[2(y_i-m(z))+(z-y_i)(y_i-m(z))^2\big]dz
=\tfrac1n\,\mathbb E_N\big[2e_i(N)+N\,e_i(N)^2\big],
\]
the last step substituting \(z=y_i+N\).
\end{proof}

\begin{proposition}[Lloyd-Max limit]
\label{prop:lloydlimit}
There is \(k_0=k_0(n,\sigma)\) such that for every \(k>k_0\) the
objective~\eqref{eq:eqmass-obj} has a unique minimizer \(y^\star(k)\), and
\[
y^\star(k)=\bar x+O(k^{-2})\qquad(k\to\infty).
\]
Hence the optimal equal-mass controller converges to the Lloyd-Max
centroidal quantizer \(f_n^{\mathrm{LM}}=\sum_i\bar x_i\mathbf 1_{I_i}\),
in agreement with \(Q^\star\to P\) (Corollary~\ref{cor:gauss-asymp}).
\end{proposition}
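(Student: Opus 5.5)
Divide the objective by \(k^2\) and write \(\varepsilon:=k^{-2}\). Since \(c_n\) is a constant, minimizing \eqref{eq:eqmass-obj} is equivalent to minimizing
\[
G_\varepsilon(y):=\tfrac1n\textstyle\sum_i(y_i-\bar x_i)^2+\varepsilon\,M(y),
\qquad M(y):=\operatorname{mmse}(Q_n^y).
\]
I would treat this as a singular perturbation of a strictly convex quadratic. Two inputs from earlier in the paper drive the argument. First, Lemma~\ref{lem:mmse-bounds} gives \(0\le M\le1\) everywhere. Second, Proposition~\ref{prop:discreteEL} gives the gradient \(\partial_{y_i}M=\tfrac1n\mathbb E_N[2e_i(N)+N e_i(N)^2]\). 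From it I would extract uniform bounds on \(\nabla M\) and \(\nabla^2M\) on bounded sets of \(y\), and preferably globally.

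\emph{Step 1 (localization).} Let \(y\) be any minimizer; one exists since \(G_\varepsilon\) is continuous and coercive. Comparing with \(G_\varepsilon(\bar x)\le\varepsilon\) and using \(M\ge0\) gives \(\tfrac1n|y-\bar x|^2\le\varepsilon\). So every minimizer lies in the ball \(B_\varepsilon:=\{|y-\bar x|\le\sqrt{n\varepsilon}\}\), which is contained in the fixed ball \(B:=\{|y-\bar x|\le\sqrt n\}\) once \(\varepsilon\le1\).

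\emph{Step 2 (uniform Hessian bound).} On \(B\) the levels are bounded, so the posterior mean satisfies \(|m(z)|\le\max_j|y_j|\le R\), and the residuals \(e_i\) are bounded by \(2R\). Differentiating under the Gaussian integral is legitimate because the mixture density is smooth and positive and all integrands are dominated by polynomials in \(N\) times \(\phi\). Differentiating the softmax representation \eqref{eq:softmax} in \(y_j\) produces terms bounded by polynomials in \(|z|\) and \(R\). Hence \(\|\nabla^2 M\|\le C(n,\sigma)\) on \(B\).

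\emph{Step 3 (uniqueness and rate).} The Hessian of \(G_\varepsilon\) is \(\tfrac2n I+\varepsilon\nabla^2M\), which is at least \((\tfrac2n-\varepsilon C)I\) on \(B\). Set \(k_0^2:=\max(1,nC)\). For \(k>k_0\) this makes \(G_\varepsilon\) strictly convex on the convex set \(B\). Any two minimizers lie in \(B\) by Step 1, and there the strictly convex \(G_\varepsilon\) has at most one minimizer, so the minimizer is unique.

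For the rate, use the stationarity equation \eqref{eq:discrete-EL}:
\[
y^\star-\bar x=-\tfrac{\varepsilon n}{2}\nabla M(y^\star).
\]
Since \(|\nabla M|\le C'\) on \(B\) by the same bounds, this gives \(|y^\star-\bar x|\le \tfrac{n}{2}C'k^{-2}=O(k^{-2})\). This improves the \(O(k^{-1})\) rate from Step 1. Because \(y^\star\to\bar x\), the controller \(\sum_i y_i^\star\mathbf 1_{I_i}\) converges to \(f_n^{\mathrm{LM}}\).

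\emph{Consistency with \(Q^\star\to P\).} Each \(\bar x_i\) is the conditional mean of \(P\) on \(I_i\). The Lloyd-Max law \(\tfrac1n\sum_i\delta_{\bar x_i}\) tends to \(P\) as \(n\to\infty\), consistent with Corollary~\ref{cor:gauss-asymp}.

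The main obstacle is Step 2. One needs Hessian bounds on \(M\) that are uniform on \(B\), including near configurations where levels collide; note \(B\) is not restricted to ordered levels. I expect no problem there, since the mixture posterior stays smooth even when atoms coincide. I would try to bound it by writing \(\nabla^2 M\) through posterior covariances of \((Y,Y^2)\) under the softmax weights. All such covariances are bounded by powers of \(R\), with Gaussian moments of \(N\).
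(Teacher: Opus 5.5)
Your proposal is correct and follows the paper's proof essentially step for step. You localize every minimizer via \(0\le\operatorname{mmse}\le1\) and comparison with \(\bar x\), use a uniform Hessian bound on a fixed ball to get strong convexity and hence uniqueness for large \(k\), and read off the \(O(k^{-2})\) rate from the stationarity equation~\eqref{eq:discrete-EL} with a bounded gradient. The differences are cosmetic: you rescale by \(k^{-2}\) and use the ball of radius \(\sqrt n\) with \(k>1\), where the paper uses radius \(1\) with \(k\ge\sqrt n\). You also justify existence and sketch the Hessian bound, both of which the paper only asserts. One small point: the paper's ``agreement with \(Q^\star\to P\)'' refers to the \(k\to\infty\) limit of Corollary~\ref{cor:gauss-asymp}, not to \(n\to\infty\).
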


\begin{proof}
By Lemma~\ref{lem:mmse-bounds}, \(0\le\operatorname{mmse}\le1\), so
\(J_n^{\mathrm{eq}}(y^\star)\le J_n^{\mathrm{eq}}(\bar x)\) gives
\(\tfrac{k^2}{n}\|y^\star-\bar x\|^2\le\operatorname{mmse}(Q_n^{\bar x})\le1\),
i.e.\ \(\|y^\star-\bar x\|\le\sqrt n/k\); thus every minimizer lies in the
ball \(B=\{\|y-\bar x\|\le1\}\) once \(k\ge\sqrt n\). On \(B\) the map
\(y\mapsto\operatorname{mmse}(Q_n^y)\) is \(C^2\) with Hessian bounded in
operator norm by some \(C=C(n,\sigma)\), so
\(\nabla^2 J_n^{\mathrm{eq}}=\tfrac{2k^2}{n}I+\nabla^2\operatorname{mmse}
\succeq(\tfrac{2k^2}{n}-C)I\succ0\) once \(k^2>nC/2\); then
\(J_n^{\mathrm{eq}}\) is strongly convex on \(B\), so the minimizer is
unique. Finally the stationarity~\eqref{eq:discrete-EL} and the uniform
bound \(|\partial_{y_i}\operatorname{mmse}|\le C'\) on \(B\) give
\(|y_i^\star-\bar x_i|=\tfrac{n}{2k^2}\,|\partial_{y_i}\operatorname{mmse}|
\le\tfrac{nC'}{2k^2}=O(k^{-2})\).
\end{proof}

\begin{remark}[Level merging and non-uniqueness]
\label{rem:merging}
For finite \(k\), when the trade-off calls for fewer effective levels
than \(n\) the surplus levels \emph{merge}: a minimizer
of~\eqref{eq:eqmass-obj} has coincident \(y_i\), a discrete echo of the
non-uniqueness of Proposition~\ref{prop:gauss-foc}. Thus
\eqref{eq:eqmass-obj} is genuinely nonconvex for small \(k\).
Program~\ref{prop:finite-program} coincides with the formulation used in
the numerical Witsenhausen literature; what is new here is that it arises
as the exact finite-level restriction of the transport-estimation
functional~\eqref{eq:OT-variational}, with the convergence guarantee
\(J_n^\star\downarrow J^\star\) of Proposition~\ref{prop:finite-level}.
Algorithm~\ref{alg:lloyd} performs the descent
of~\eqref{eq:discrete-EL}; warm-started across increasing \(n\) or from
several initializations, it reliably locates the global branch.
\end{remark}

\begin{algorithm}[t]
\caption{Estimation-aware Lloyd iteration (equal-mass finite-level program)}
\label{alg:lloyd}
\KwIn{penalty \(k>0\), prior std.\ \(\sigma\), levels \(n\), tolerance \(\varepsilon\)}
Fix bins \(\zeta_i=\sigma\,\Phi^{-1}(i/n)\), \(i=1,\dots,n-1\)
   (\(\zeta_0=-\infty\), \(\zeta_n=+\infty\))\;
Barycenters
   \(\bar x_i\leftarrow n\sigma^2\bigl(\phi_\sigma(\zeta_{i-1})-\phi_\sigma(\zeta_i)\bigr)\),
   \(i=1,\dots,n\)\;
Initialize \(y_i\leftarrow\bar x_i\) \tcp*{Lloyd-Max quantizer}
\Repeat{\(\|\nabla J\|<\varepsilon\)}{
  \emph{Estimation step:} form \(p_Z(z)=\tfrac1n\sum_j\phi(z-y_j)\) and
     \(m(z)=\sum_j y_j\phi(z-y_j)\big/\sum_j\phi(z-y_j)\)\;
  \(g_i\leftarrow\partial_{y_i}\!\operatorname{mmse}(Q_n)\) by
     one-dimensional quadrature\;
  \(\nabla_i J\leftarrow \tfrac{2k^2}{n}\,(y_i-\bar x_i)+g_i\)
     \tcp*{transport pull \(+\) estimation push}
  \emph{Quantization step:} \(y\leftarrow y-t\,\nabla J\),
     \(t\) from backtracking line search on \(J\)\;
}
\KwOut{levels \(y^\star\), controller
   \(f_n=\sum_i y_i^\star\mathbf 1_{I_i}\), cost \(J(Q_n)\)}
\end{algorithm}

\begin{figure}[t]
\centering
\includegraphics[width=\linewidth]{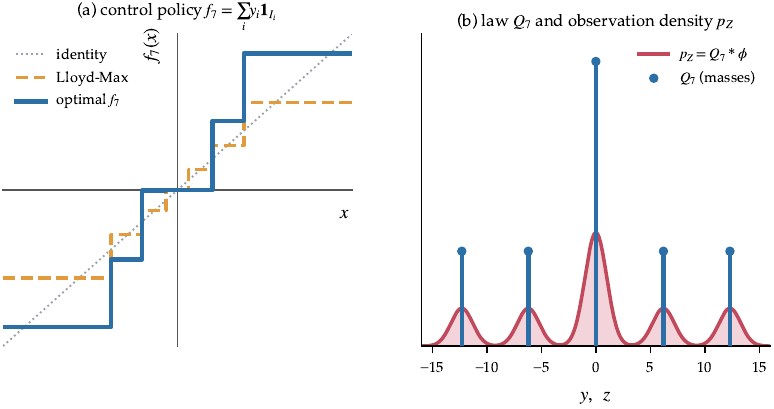}
\caption{The computed solution for \(\sigma=5\), \(k=0.1\), \(n=7\).
\emph{(a)} The optimal control policy \(f_7\) (solid) is the \emph{output}
of Algorithm~\ref{alg:lloyd}. The dashed staircase is the classical
\emph{Lloyd-Max} quantizer, which places each level at its bin centroid
\(\bar x_i\): it minimizes the transport (quantization) error alone, and is
both the algorithm's \emph{initialization} and its \(k\to\infty\) limit
(Proposition~\ref{prop:lloydlimit}), not itself a solution of the
Witsenhausen problem. Accounting for the estimation term pushes the levels
outward, lowering the cost from \(J\approx0.74\) (Lloyd-Max) to
\(J\approx0.14\) (optimal \(f_7\)); the identity (dotted) is
\(U_1\equiv0\). \emph{(b)} The induced law \(Q_7\) as point masses
(stems) and the observation density \(p_Z=Q_7*\phi\) that Controller~2
sees. The seven levels merge into five well-separated atoms
\(\{0,\pm6.2,\pm12.3\}\) (masses
\(\tfrac37,\tfrac17,\tfrac17,\tfrac17,\tfrac17\)), whose Gaussian blurs
barely overlap, so \(Y\) is easily resolved from \(Z\) (small MMSE,
large Fisher information). This is the discrete realization of the
signalling staircase of Figure~\ref{fig:map}.}
\label{fig:fn}
\end{figure}

\subsection{The full program: Voronoi cells and generalized Lloyd-Max}
\label{sec:voronoi}

The equal-mass reduction froze the partition and moved only the levels. We
now let \emph{both} families of variables move, levels \(y_i\) and
thresholds \(\zeta_i\), and record the resulting stationarity conditions.
They reveal that the finite-level program is, structurally, an optimal
\emph{quantizer}: a nearest-neighbour partition paired with centroid
levels, perturbed by the estimation term. This both answers ``what is the
general structure of the program?'' and supplies a solver for the general
(non-equal-mass) case.

\begin{proposition}[Optimal levels and thresholds]
\label{prop:voronoi}
Let \((y,\zeta)\) be an interior stationary point of the full
program~\eqref{eq:finite-program}, with \(w_i=P(I_i)>0\), \(\bar x_i\) the
barycenter of \(P\) on the bin \(I_i=(\zeta_{i-1},\zeta_i]\), and
\[
\Psi(y):=y^2-2y\,a(y)+b(y)
=\frac{\delta\operatorname{mmse}(Q_n)}{\delta q}(y)
\]
the estimation sensitivity of Lemma~\ref{lem:mmse-derivative}, where
\(a(y)=\int m(z)\phi(z-y)\,dz\) and \(b(y)=\int m(z)^2\phi(z-y)\,dz\) are
the smoothed posterior functionals~\eqref{eq:aq-bq-def} built from the
posterior mean \(m(z)=\mathbb E[Y\mid Z=z]\) of the mixture \(Q_n*\phi\)
(all well defined for atomic \(Q_n\), whose mixture
\(p_Z=\sum_i w_i\phi(\cdot-y_i)\) is smooth). Then
\begin{align}
\text{(levels)}\qquad
& 2k^2 w_i\,(y_i-\bar x_i)+\partial_{y_i}\!\operatorname{mmse}(Q_n)=0,
\label{eq:voronoi-level}\\[2pt]
\text{(thresholds)}\qquad
& \zeta_i=\underbrace{\frac{y_i+y_{i+1}}{2}}_{\text{Voronoi boundary}}
   +\underbrace{\frac{\Psi(y_{i+1})-\Psi(y_i)}{2k^2\,(y_{i+1}-y_i)}}_{\text{MMSE correction}}
\qquad(i=1,\dots,n-1).
\label{eq:voronoi-thresh}
\end{align}
The threshold \(\zeta_i\) is the \emph{midpoint} of the two adjacent
levels, the point equidistant from \(y_i\) and \(y_{i+1}\), i.e.\ the
\emph{nearest-neighbour (Voronoi) boundary} between them, displaced by a
term that vanishes as \(k\to\infty\).
\end{proposition}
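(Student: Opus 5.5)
The plan is to compute the partial derivatives of the explicit objective~\eqref{eq:finite-program} in the two families of variables separately, holding the other family fixed, and to set them to zero at the interior stationary point. I would parametrize the program by \((y,\zeta)\) rather than by \((y,w)\). The reason is that a move of a single threshold \(\zeta_i\) changes only the two adjacent bins \(I_i,I_{i+1}\). It transfers mass \(p(\zeta_i)\,d\zeta_i\) from \(I_{i+1}\) to \(I_i\), where \(p\) is the density of \(P\), and it leaves all levels fixed. On the interior \(\{0<W_1<\dots<W_{n-1}<1\}\) the change of variables \(\zeta_i=F_P^{-1}(W_i)\) is a diffeomorphism, because \(p>0\). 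Hence stationarity in \((y,w)\) and stationarity in \((y,\zeta)\) coincide.

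\emph{Levels.} In the transport term of~\eqref{eq:finite-program} the quantities \(w_i\), \(\bar x_i\) and \(\sigma^2-\sum_i w_i\bar x_i^2\) depend only on the thresholds, not on \(y\). The \(y_i\)-derivative of the transport term is therefore \(k^2\,\partial_{y_i}\bigl[w_i(y_i-\bar x_i)^2\bigr]=2k^2w_i(y_i-\bar x_i)\). Adding \(\partial_{y_i}\operatorname{mmse}(Q_n)\) gives~\eqref{eq:voronoi-level} directly. The computation is the same as in Proposition~\ref{prop:discreteEL}, with \(1/n\) replaced by \(w_i\).

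\emph{Thresholds.} For the transport term I would use the bin representation \(W_2^2(P,Q_n)=\sum_j\int_{I_j}(x-y_j)^2\,dP\) from the proof of Proposition~\ref{prop:finite-program}, rather than the barycentric form. By the Leibniz rule its \(\zeta_i\)-derivative is \(p(\zeta_i)\bigl[(\zeta_i-y_i)^2-(\zeta_i-y_{i+1})^2\bigr]\).

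For the estimation term, note that \(\partial_{\zeta_i}w_i=p(\zeta_i)=-\partial_{\zeta_i}w_{i+1}\), while the atoms themselves do not move. The induced perturbation of \(Q_n\) is therefore the mass-zero signed measure \(h=p(\zeta_i)(\delta_{y_i}-\delta_{y_{i+1}})\). The functional \(\operatorname{mmse}(Q)=\int y^2\,dQ-\int g_Q^2/p_Z^Q\) has \(g_Q\) and \(p_Z^Q\) affine in the weights, and \(p_Z>0\) is smooth. So the first-variation formula of Lemma~\ref{lem:mmse-derivative} applies to this atomic direction, exactly as argued in the proof of Proposition~\ref{prop:ac}. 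It gives \(\partial_{\zeta_i}\operatorname{mmse}(Q_n)=p(\zeta_i)\bigl[\Psi(y_i)-\Psi(y_{i+1})\bigr]\).

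Setting the total \(\zeta_i\)-derivative to zero and dividing by \(p(\zeta_i)>0\) gives
\[
k^2\bigl[(\zeta_i-y_i)^2-(\zeta_i-y_{i+1})^2\bigr]=\Psi(y_{i+1})-\Psi(y_i).
\]
The left side factors as \(k^2(y_{i+1}-y_i)(2\zeta_i-y_i-y_{i+1})\). Since the levels are strictly ordered, \(y_{i+1}-y_i>0\), and solving for \(\zeta_i\) yields~\eqref{eq:voronoi-thresh}.

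For the closing claim that the correction vanishes as \(k\to\infty\), I would bound \(\Psi\) uniformly on a fixed compact set of levels. An energy comparison as in the proof of Proposition~\ref{prop:lloydlimit}, combined with \(\operatorname{mmse}\le1\) from Lemma~\ref{lem:mmse-bounds}, confines minimizers to a bounded region. There \(\Psi\) is continuous, so \(|\Psi(y_{i+1})-\Psi(y_i)|\) stays bounded. Dividing by \(2k^2\) makes the correction \(O(k^{-2})\), provided the gaps \(y_{i+1}-y_i\) stay bounded away from zero; I would note that this fails when levels merge, as described in Remark~\ref{rem:merging}.

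The main obstacle I expect is the rigorous justification of the estimation derivative in the atomic direction. This requires differentiating \(\int g^2/p_Z\) under the integral sign with respect to the weights, which calls for a dominating bound. I would obtain it from \(|m(z)|\le\max_j|y_j|\) and from the Gaussian tails of \(\phi(z-y_j)\), and then apply the Fubini swap of Lemma~\ref{lem:mmse-derivative} with \(h\) a difference of Dirac masses. The swap reduces the result to \(\Psi(y_i)-\Psi(y_{i+1})\) evaluated pointwise.
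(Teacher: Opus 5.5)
Your proposal is correct and is essentially the paper's own proof: parametrize by \((y,\zeta)\), obtain the level condition from the single transport summand that depends on \(y_i\), and obtain the threshold condition by combining the Leibniz boundary term \(p(\zeta_i)\bigl[(\zeta_i-y_i)^2-(\zeta_i-y_{i+1})^2\bigr]\) with \(\partial\operatorname{mmse}/\partial w_j=\Psi(y_j)\) along the mass transfer from \(I_{i+1}\) to \(I_i\), then factor and solve. Your justification of the atomic first variation and your caveat on the \(k\to\infty\) claim go slightly beyond the paper, which simply asserts that the correction vanishes; note that with free weights the energy comparison only bounds \(\sum_i w_i(y_i-\bar x_i)^2\), so confining the levels also requires the cell masses to stay bounded below.
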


\begin{proof}
Parametrize the program by \((y,\zeta)\); the masses are
\(w_i=F_P(\zeta_i)-F_P(\zeta_{i-1})\) and
\[
J=k^2\sum_{j}\int_{\zeta_{j-1}}^{\zeta_j}(x-y_j)^2\,dP
+\operatorname{mmse}(Q_n).
\]
\emph{Levels.} Differentiating in \(y_i\), only the \(j=i\) transport
summand depends on \(y_i\), and
\(\partial_{y_i}\!\int_{I_i}(x-y_i)^2dP=-2\int_{I_i}(x-y_i)\,dP
=-2(\mu_i-y_iw_i)=2w_i(y_i-\bar x_i)\) since \(\mu_i=w_i\bar x_i\); adding
\(\partial_{y_i}\!\operatorname{mmse}\) gives~\eqref{eq:voronoi-level} (the
estimation gradient is that of Proposition~\ref{prop:discreteEL} with
weight \(w_i\) in place of \(1/n\)).

\emph{Thresholds.} Only the two bins \(I_i,I_{i+1}\) and the two masses
\(w_i,w_{i+1}\) depend on \(\zeta_i\), through
\(\partial_{\zeta_i}w_i=p(\zeta_i)=-\partial_{\zeta_i}w_{i+1}\). The
transport contribution is the boundary term
\[
\partial_{\zeta_i}\!\Bigl[\!\int_{I_i}\!(x-y_i)^2dP+\!\int_{I_{i+1}}\!(x-y_{i+1})^2dP\Bigr]
=p(\zeta_i)\bigl[(\zeta_i-y_i)^2-(\zeta_i-y_{i+1})^2\bigr],
\]
while, since
\(\partial\!\operatorname{mmse}/\partial w_j=\Psi(y_j)\) (the first
variation of \(\operatorname{mmse}\) at the atom \(y_j\),
Lemma~\ref{lem:mmse-derivative}) and mass only shifts between bins \(i\)
and \(i+1\),
\(\partial_{\zeta_i}\!\operatorname{mmse}
=p(\zeta_i)\bigl[\Psi(y_i)-\Psi(y_{i+1})\bigr]\). Hence
\[
\partial_{\zeta_i}J
=p(\zeta_i)\Bigl\{k^2\bigl[(\zeta_i-y_i)^2-(\zeta_i-y_{i+1})^2\bigr]
+\Psi(y_i)-\Psi(y_{i+1})\Bigr\}.
\]
As \(p(\zeta_i)>0\), the bracket vanishes at a stationary point. Using the
factorization
\((\zeta_i-y_i)^2-(\zeta_i-y_{i+1})^2=(y_{i+1}-y_i)(2\zeta_i-y_i-y_{i+1})\)
and solving for \(\zeta_i\) gives~\eqref{eq:voronoi-thresh}.
\end{proof}

\begin{remark}[The estimation sensitivity in closed form]
\label{rem:psi-closed}
The sensitivity \(\Psi\) has an exact representation that names its role.
Because \(\phi(z-y)\) is the density of \(Z=y+N\) given \(Y=y\), the
smoothed functionals of Lemma~\ref{lem:mmse-derivative} are the conditional
moments \(a(y)=\int m(z)\phi(z-y)\,dz=\mathbb E[m(Z)\mid Y=y]\) and
\(b(y)=\mathbb E[m(Z)^2\mid Y=y]\); completing the square,
\begin{equation}
\Psi(y)=y^2-2y\,a(y)+b(y)=\mathbb E\bigl[(y-m(Z))^2\bigm|Y=y\bigr].
\label{eq:psi-condexp}
\end{equation}
So \(\Psi(y)\) is the mean-squared gap between the level \(y\) and the MMSE
estimate \(m(Z)\), averaged over the channel noise given that the source
sits at \(y\), literally how badly level \(y\) is resolved after the noise.
This is why it acts as an \emph{estimation sensitivity} and drives both the
threshold correction~\eqref{eq:voronoi-thresh} and the level
condition~\eqref{eq:voronoi-level}. The representation is exact but not
elementary: for \(n=1\) (a single atom \(y_1\), \(m\equiv y_1\)) it is
\(\Psi(y)=(y-y_1)^2\); for \(n\ge2\), \(m\) is the softmax
ratio~\eqref{eq:softmax}, whose convolution against \(\phi\) has no
elementary antiderivative, so \(\Psi\) is evaluated by the one-dimensional
Gaussian quadrature of Algorithm~\ref{alg:lloyd}. In the well-separated
(large-\(k\)) regime \(m(Z)\approx y\) with high probability given
\(Y=y\), so \(\Psi(y)\to0\): the estimation correction switches off and the
Voronoi/centroid conditions reduce to Lloyd-Max
(Corollary~\ref{cor:lloydmax-limit}).
\end{remark}

\begin{figure}[!ht]
\centering
\begin{tikzpicture}
\begin{axis}[width=0.95\linewidth,height=4.8cm,axis lines=left,
  xlabel={state space \(x\)},
  xmin=-11,xmax=11,ymin=0,ymax=1.22,ytick=\empty,xtick=\empty,clip=false]
\addplot[draw=none,fill=black!4]  coordinates {(-11,0)(-5.75,0)(-5.75,1)(-11,1)}\closedcycle;
\addplot[draw=none,fill=black!11] coordinates {(-5.75,0)(-1.75,0)(-1.75,1)(-5.75,1)}\closedcycle;
\addplot[draw=none,fill=black!4]  coordinates {(-1.75,0)(1.75,0)(1.75,1)(-1.75,1)}\closedcycle;
\addplot[draw=none,fill=black!11] coordinates {(1.75,0)(5.75,0)(5.75,1)(1.75,1)}\closedcycle;
\addplot[draw=none,fill=black!4]  coordinates {(5.75,0)(11,0)(11,1)(5.75,1)}\closedcycle;
\addplot[thick] coordinates {(-5.75,0)(-5.75,1)};
\addplot[thick] coordinates {(-1.75,0)(-1.75,1)};
\addplot[thick] coordinates {(1.75,0)(1.75,1)};
\addplot[thick] coordinates {(5.75,0)(5.75,1)};
\addplot[only marks,mark=*,mark size=1.7pt] coordinates
  {(-8,0.5)(-3.5,0.5)(0,0.5)(3.5,0.5)(8,0.5)};
\node[font=\scriptsize,anchor=north] at (axis cs:-8,0.44){\(y_1\)};
\node[font=\scriptsize,anchor=north] at (axis cs:-3.5,0.44){\(y_2\)};
\node[font=\scriptsize,anchor=north] at (axis cs:0,0.44){\(y_3\)};
\node[font=\scriptsize,anchor=north] at (axis cs:3.5,0.44){\(y_4\)};
\node[font=\scriptsize,anchor=north] at (axis cs:8,0.44){\(y_5\)};
\node[font=\scriptsize] at (axis cs:-8.6,0.86){\(I_1\)};
\node[font=\scriptsize] at (axis cs:-3.75,0.86){\(I_2\)};
\node[font=\scriptsize] at (axis cs:0,0.86){\(I_3\)};
\node[font=\scriptsize] at (axis cs:3.75,0.86){\(I_4\)};
\node[font=\scriptsize] at (axis cs:8.6,0.86){\(I_5\)};
\draw[<->,gray] (axis cs:0,1.08) -- (axis cs:1.75,1.08)
  node[midway,above,font=\scriptsize,black]{\(d\)};
\draw[<->,gray] (axis cs:1.75,1.08) -- (axis cs:3.5,1.08)
  node[midway,above,font=\scriptsize,black]{\(d\)};
\addplot[densely dashed] coordinates {(-5.35,0)(-5.35,1.06)};
\draw[<->] (axis cs:-5.75,0.32) -- (axis cs:-5.35,0.32);
\node[font=\scriptsize,anchor=south] at (axis cs:-5.35,1.07){MMSE shift};
\node[font=\scriptsize,anchor=west] at (axis cs:-5.26,0.80){\(\zeta_1\)};
\end{axis}
\end{tikzpicture}
\caption{Decision cells of a five-level controller. Each cell \(I_i\) is
the set of states mapped to level \(y_i\), the states for which \(y_i\) is
the \emph{nearest} level, so its boundaries (solid) fall at the midpoints
\(\tfrac12(y_i+y_{i+1})\), equidistant (\(d\) each) from the two adjacent
levels: this is the one-dimensional \emph{Voronoi} partition of the levels
(Corollary~\ref{cor:lloydmax-limit}, the \(k\to\infty\) limit). For finite
\(k\) the estimation term shifts each true threshold \(\zeta_i\) off the
midpoint (dashed), toward the level of smaller estimation sensitivity, by
the correction in~\eqref{eq:voronoi-thresh}; the shift vanishes as
\(k\to\infty\).}
\label{fig:voronoi-cells}
\end{figure}

\begin{corollary}[Classical Lloyd-Max as \(k\to\infty\)]
\label{cor:lloydmax-limit}
As \(k\to\infty\) the MMSE corrections
in~\eqref{eq:voronoi-level} and \eqref{eq:voronoi-thresh} vanish and the
stationarity conditions become
\[
y_i=\bar x_i\ \text{(centroid)},\qquad
\zeta_i=\tfrac12(y_i+y_{i+1})\ \text{(midpoint)} :
\]
the bins are the \emph{Voronoi cells} of the levels and each level is the
\emph{centroid} of its cell. These are exactly the Lloyd-Max optimality
conditions for the \(n\)-point quantization of \(P\). The finite-level
program is therefore an optimal quantizer \emph{regularized by} the
estimation cost \(\operatorname{mmse}\), reducing to the classical
quantizer when control is expensive.
\end{corollary}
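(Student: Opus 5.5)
The plan is to work directly from the stationarity conditions of Proposition~\ref{prop:voronoi}, dividing each by the factor \(k^2\) that multiplies the transport part. This exposes the estimation terms as \(O(k^{-2})\) perturbations. The level condition~\eqref{eq:voronoi-level} becomes
\[
y_i-\bar x_i=-\frac{\partial_{y_i}\operatorname{mmse}(Q_n)}{2k^2 w_i},
\]
and~\eqref{eq:voronoi-thresh} already has the form midpoint plus \(\bigl(\Psi(y_{i+1})-\Psi(y_i)\bigr)/\bigl(2k^2(y_{i+1}-y_i)\bigr)\). So it suffices to show three things along any family of stationary points \((y(k),\zeta(k))\) as \(k\to\infty\): the numerators stay bounded, the weights \(w_i\) stay bounded away from \(0\), and the gaps \(y_{i+1}-y_i\) stay bounded away from \(0\).

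The numerators are controlled first. The gradient formula of Proposition~\ref{prop:discreteEL}, with weight \(w_i\) in place of \(1/n\), gives
\[
\partial_{y_i}\operatorname{mmse}=w_i\,\mathbb E_N\bigl[2e_i(N)+N e_i(N)^2\bigr],
\]
and \(w_i\) cancels the \(w_i\) in the denominator. Since \(m\) is a convex combination of the levels by~\eqref{eq:softmax}, each residual satisfies \(|e_i|\le \max_j y_j-\min_j y_j\). Likewise \(0\le\Psi\le(\max_j y_j-\min_j y_j)^2\) by~\eqref{eq:psi-condexp}. It remains to bound the level spread uniformly in \(k\). For this I would compare values as in the proof of Proposition~\ref{prop:lloydlimit}: for a minimizer, or any stationary point with cost at most \(J(\text{Lloyd-Max point})\le k^2 D_n+1\), Lemma~\ref{lem:mmse-bounds} gives \(k^2 W_2^2(P,Q_n)\le k^2D_n+1\), where \(D_n\) is the Lloyd-Max distortion. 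Hence \(W_2(P,Q_n)\) is bounded, and with nondegenerate weights the levels \(y_i\approx\bar x_i\) are bounded.

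The main obstacle is the nondegeneracy: keeping the weights and level gaps away from zero, since the statement is only meaningful at interior points. I would handle this by restricting to a compact set of interior configurations. Fix a nondegenerate stationary configuration of the classical Lloyd-Max system, or argue by compactness along subsequences. If \(w_i(k)\ge c>0\) and \(y_{i+1}-y_i\ge c\), then the displayed corrections are \(O(k^{-2})\). Any limit point \((y^\infty,\zeta^\infty)\) then satisfies \(y_i^\infty=\bar x_i(\zeta^\infty)\), using continuity of the barycenters in \(\zeta\) for Gaussian \(P\), together with \(\zeta_i^\infty=\tfrac12(y_i^\infty+y_{i+1}^\infty)\).

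These are exactly the Lloyd-Max conditions. The interpretation that the bins are Voronoi cells follows because a midpoint threshold between ordered levels assigns each \(x\) to its nearest level. The step I would state carefully as a hypothesis, or prove via the bounds above, is that the gaps do not collapse, since merged levels (Remark~\ref{rem:merging}) make the threshold correction ill-defined.
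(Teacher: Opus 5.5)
Your proposal follows the paper's route of dividing the level condition by \(2k^2w_i\) and reading the threshold correction as \(O(k^{-2})\). It is more careful than the paper's proof, which simply asserts these \(O(k^{-2})\) bounds without controlling the numerators or the nondegeneracy of \(w_i\) and \(y_{i+1}-y_i\) that you flag. One simplification is available: the gap hypothesis is dispensable, since the threshold correction is a difference quotient of the smooth \(\Psi\), so by the mean value theorem it is at most \(\sup_{[y_i,y_{i+1}]}|\Psi'|/(2k^2)\) with \(|\Psi'|\) bounded in terms of the level spread alone; only boundedness of the levels is genuinely needed, and that is where your weight hypothesis or compactness argument enters.
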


\begin{proof}
The correction in~\eqref{eq:voronoi-thresh} is \(O(k^{-2})\), and dividing
\eqref{eq:voronoi-level} by \(2k^2w_i\) shows
\(y_i-\bar x_i=-\partial_{y_i}\!\operatorname{mmse}/(2k^2w_i)=O(k^{-2})\);
both \(\to0\). The limiting equations \(y_i=\bar x_i\),
\(\zeta_i=\tfrac12(y_i+y_{i+1})\) are the centroid and nearest-neighbour
conditions defining a Lloyd-Max quantizer
\cite{santambrogio2015optimal}.
\end{proof}

\begin{remark}[A general solver, and higher dimensions]
\label{rem:general-solver}
Conditions~\eqref{eq:voronoi-level} and \eqref{eq:voronoi-thresh} define a
\emph{generalized Lloyd iteration} for the full (non-equal-mass) program:
alternate a \emph{centroid step}, move each level to
\(\bar x_i-\partial_{y_i}\!\operatorname{mmse}/(2k^2w_i)\), with a
\emph{Voronoi step}, reset each threshold to the corrected
midpoint~\eqref{eq:voronoi-thresh}; each move lowers \(J\), so the scheme
converges to a stationary point (it is Algorithm~\ref{alg:lloyd} with the
thresholds unfrozen). The MMSE corrections are what distinguish the
Witsenhausen quantizer from a plain source quantizer: they push the levels
apart (larger Fisher information, Corollary~\ref{cor:fisher-var}) and shift
each boundary toward the level of smaller estimation sensitivity. The
structure is dimension-free: for the vector counterpart the decision
regions become genuine Voronoi \emph{polytopes} of the levels, again
perturbed by the estimation term, so the problem is an MMSE-regularized
\emph{vector} quantizer solvable by the same alternation.
\end{remark}

\begin{remark}[Solving the nonconvex program: deterministic annealing in
\(k\)]
\label{rem:annealing}
The program is nonconvex, so the generalized Lloyd iteration of
Remark~\ref{rem:general-solver} converges only to a stationary point. This
is not a defect of the reformulation but the familiar situation of
quantizer design (even plain \(k\)-means is nonconvex), and the quantizer
view makes its remedies available here. Three features of the Voronoi
structure make them effective. First, in one dimension the optimal
partition is \emph{order-preserving} (Fact~\ref{fact:1d-ot}): each Voronoi
cell is an interval, so there is no combinatorial search over assignments,
only the placement of the \(2n-1\) reals \((y,\zeta)\). Second, given the
levels the partition is \emph{closed-form}~\eqref{eq:voronoi-thresh}, never
searched. Third, the transport-only part (\(k\to\infty\),
Corollary~\ref{cor:lloydmax-limit}) is a scalar quantizer, whose global
optimum is computable by dynamic programming. Building on these, the
effective solver is a \emph{homotopy in \(k\)}, deterministic annealing
\cite{rose1998deterministic} with \(k\) as an inverse temperature: begin in
the large-\(k\) regime, where the minimizer is unique
(Corollary~\ref{cor:gauss-asymp}, \(k^2>\tfrac14\)), and decrease \(k\),
tracking the optimizer as its levels \emph{split at successive
bifurcations}; splitting steps in the style of Linde-Buzo-Gray
\cite{linde1980algorithm} seed each new level and the Lloyd alternation
polishes it. This continuation follows the global branch through the mode
transitions rather than freezing at a fixed multi-modal local minimum, and
it is the principled form of the warm-starting across \(k\) used in
Figures~\ref{fig:fn} and~\ref{fig:keffect}.

What the structure does \emph{not} give is a free global solve: the
estimation term \(\operatorname{mmse}(Q_n)=\sum_iw_iy_i^2-\int g^2/p_Z\)
couples all levels through the mixture overlap
\(p_Z=\sum_jw_j\phi(\cdot-y_j)\), so it is not a sum of per-bin costs and
the optimal-substructure behind the scalar-quantizer dynamic program
breaks, the precise obstruction to a polynomial-time global algorithm.
The coupling decays exponentially in level separation, however, so in the
cheap-control regime, where the levels are spaced
\(\sim\!4\sqrt{\log(1/k)}\) apart (Corollary~\ref{cor:limit-controllers}),
the estimation cost nearly decouples into nearest-neighbour confusions; a
majorize-minimize scheme on this banded surrogate admits an exact
one-dimensional dynamic program per iteration, and the large-\(k\) convex
anchor together with the homotopy provides a practical certificate along
the tracked branch.
\end{remark}

\section{Numerical Study and Asymptotics}
\label{sec:numerics}

We now solve the finite-dimensional program of \S\ref{sec:finite-program-sec}
numerically and characterize the two extreme regimes, cheap and expensive
control, analytically, closing the loop with the explicit limiting
controllers of the original problem.

\subsection{A numerical illustration}

We solve the finite-level program by Algorithm~\ref{alg:lloyd} for
\(\sigma=5\) and \(k=0.1\). For each \(n\) we report
\(\widehat J_n\), the least cost found using at most \(n\) atoms
(warm-starting across \(n\) and over several initializations); by
construction \(\widehat J_n\ge J_n^\star\) and \(\widehat J_n\) is
nonincreasing, so it is a monotone upper estimate of the sequence
\(J_n^\star\downarrow J^\star\) of Proposition~\ref{prop:finite-level}.

\begin{example}[Finite-level convergence]
\label{ex:convergence}
With \(\sigma=5\) and \(k=0.1\), the computed values are
\[
\begin{array}{c|ccccccc}
n & 1 & 2 & 3 & 4 & 5 & 6 & 7\\[2pt]\hline
\widehat J_n & 0.250 & 0.092 & 0.060 & 0.0535 & 0.0529 & 0.0529 & 0.0529
\end{array}
\]
The sequence decreases and stabilizes by \(n=5\)
(Figure~\ref{fig:converge}) at \(\widehat J_n\approx0.053\), an upper
estimate of \(J^\star\); adding further levels yields no improvement, the
surplus levels merging as in Remark~\ref{rem:merging}. That the plateau
sets in at \(n=5\) matches the five effective levels of the optimal
controller in Figure~\ref{fig:fn}. The single-level value
\(\widehat J_1=k^2\sigma^2=0.25\) is the full-cancellation policy
\(Q=\delta_0\).
\end{example}

\begin{figure}[t]
\centering
\includegraphics[width=0.60\linewidth]{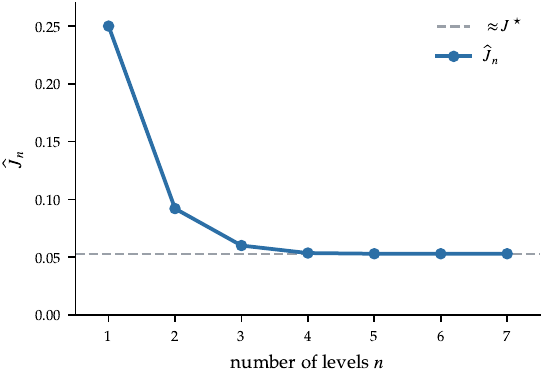}
\caption{Convergence \(\widehat J_n\downarrow J^\star\)
(Proposition~\ref{prop:finite-level}) for \(\sigma=5\), \(k=0.1\),
computed by Algorithm~\ref{alg:lloyd}. The cost stabilizes at \(n=5\),
the number of effective levels of the optimal controller
(Figure~\ref{fig:fn}).}
\label{fig:converge}
\end{figure}

\begin{figure}[t]
\centering
\includegraphics[width=\linewidth]{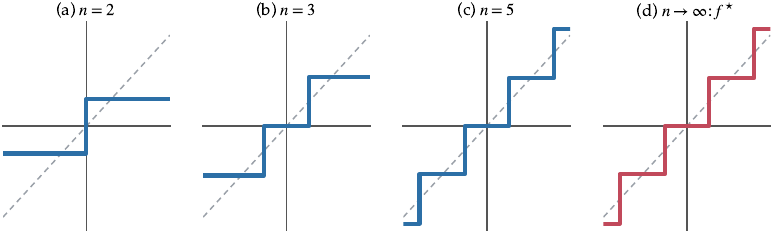}
\caption{Convergence of the optimal controller \(f_n=F_{Q_n}^{-1}\circ F_P\)
at fixed \(k=0.1\) (\(\sigma=5\)); axes are \(x\) (horizontal) and
\(f_n(x)\) (vertical), the identity \(U_1\equiv0\) shown gray. Each is the
monotone transport map pushing \(P\) onto \(Q_n\), a signalling staircase
that gains levels and sharpens as \(n\) grows, with cost
\(J_n=0.092,0.059,0.053\downarrow J^\star\) (Figure~\ref{fig:converge}).
\emph{(d)} the limit \(f^\star\) (here the converged \(n=5\) map,
\(J^\star\approx0.053\)): by \(n=5\) the outer step already carries
negligible probability, so \(f_5\) has reached \(f^\star\).}
\label{fig:solconv-f}
\end{figure}

\begin{figure}[t]
\centering
\includegraphics[width=\linewidth]{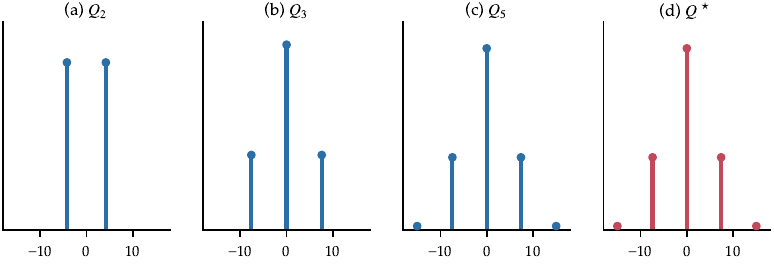}
\caption{Convergence of the optimal \emph{law} \(Q_n=\sum_i w_i\delta_{y_i}\)
at fixed \(k=0.1\); stems sit at the atoms \(y_i\) with heights the weights
\(w_i\). As \(n\) grows the atoms proliferate and separate, but beyond the
effective count they add only negligible-weight satellites (the \(Q_5\)
atoms at \(\pm15\) carry mass \(0.012\)); \emph{(d)} the limit \(Q^\star\)
(converged \(n=5\)). \(Q_3\) and \(Q_5\approx Q^\star\) nearly coincide, the
law has stabilized. Its density counterpart \(q_n=Q_n*\phi\) is
Figure~\ref{fig:solconv-qn}. Structure by symmetric free-weight
optimization; non-convex, hence a near-optimum
(cf.\ Remark~\ref{rem:atomic-vs-ac}).}
\label{fig:solconv-q}
\end{figure}

\begin{figure}[t]
\centering
\includegraphics[width=\linewidth]{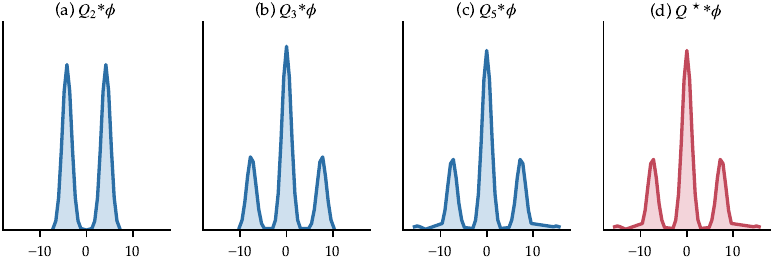}
\caption{The density counterpart of Figure~\ref{fig:solconv-q}: the smoothed
\emph{observation} density \(Q_n*\phi\) that Controller~2 sees (the law
\(Q_n\) blurred by the unit-Gaussian noise \(\phi\)), at fixed \(k=0.1\) and
\(n=2,3,5\), with \emph{(d)} the limit \(Q^\star*\phi\) (shaded; the
converged \(n=5\) density). Note this is \emph{not} the law density
\(q^\star\) of Figure~\ref{fig:density}: convolving with \(\phi\) adds one
unit of variance, so \(Q_n*\phi\) is broader and smoother than \(q^\star\)
itself. Unlike the atomic \(Q_n\), these blurred versions are genuine
densities; they converge as \(n\) grows, with \(Q_3*\phi\) and
\(Q_5*\phi\approx Q^\star*\phi\) already indistinguishable, the multi-lump
profile Controller~2 must resolve.}
\label{fig:solconv-qn}
\end{figure}

We repeat the experiment in the \emph{expensive-control} regime
\(k=0.5\), just below the linear-optimality threshold \(k_c\approx0.56\)
(Remark~\ref{rem:hermite}), to contrast with the cheap-control pictures
above. Now signalling is costly: the optimal levels sit closer together,
the cost is far higher (\(J^\star\approx0.69\) versus \(0.053\)), and the
finite-level atoms do \emph{not} lock onto a few sharp modes but
\emph{proliferate and fill in}, approximating the smooth, absolutely
continuous \(Q^\star\) of Proposition~\ref{prop:ac}; convergence in \(n\) is
correspondingly slower (\(J_n=2.27,1.23,0.76\) for \(n=2,3,5\), reaching
\(0.69\) only near \(n=9\)).

\begin{figure}[t]
\centering
\includegraphics[width=\linewidth]{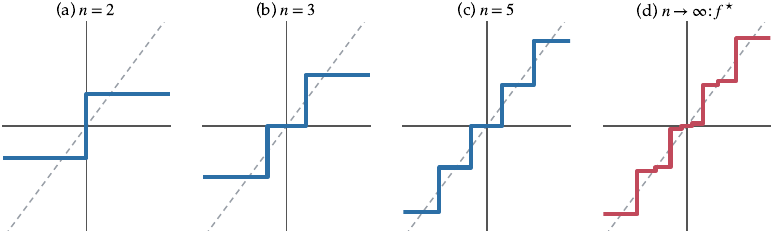}
\caption{The controllers \(f_n=F_{Q_n}^{-1}\circ F_P\) at
\(k=0.5\) (\(\sigma=5\)), the counterpart of Figure~\ref{fig:solconv-f}
(\(k=0.1\)) in the expensive-control regime. The steps are smaller and
closer (levels \(\pm5.2,\pm10.7\) versus \(\pm7.4,\pm15\)) and additional
levels stay active, so the staircase refines more gradually toward
\(f^\star\) \emph{(d)} (converged \(n=9\), \(J^\star\approx0.69\)); the
identity (gray) is \(U_1\equiv0\).}
\label{fig:solconv-f5}
\end{figure}

\begin{figure}[t]
\centering
\includegraphics[width=\linewidth]{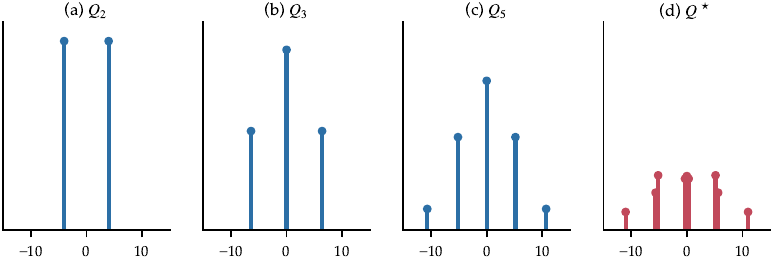}
\caption{The laws \(Q_n\) at \(k=0.5\), the counterpart of
Figure~\ref{fig:solconv-q}. In contrast to the cheap-control case, the
atoms do not settle at a few well-separated modes: as \(n\) grows they
\emph{proliferate and cluster} \emph{(d)} (converged \(n=9\)), the
finite-level approximation of the smooth, absolutely continuous \(Q^\star\)
that Proposition~\ref{prop:ac} guarantees (its density is
Figure~\ref{fig:solconv-qn5}). Expensive control keeps the levels closer and
the weights more spread.}
\label{fig:solconv-q5}
\end{figure}

\begin{figure}[t]
\centering
\includegraphics[width=\linewidth]{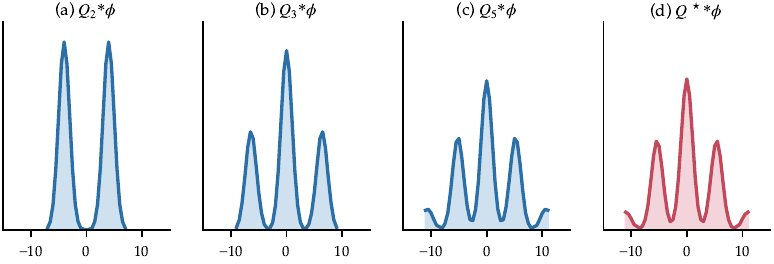}
\caption{The \emph{observation} densities \(Q_n*\phi\) at \(k=0.5\), the
counterpart of Figure~\ref{fig:solconv-qn} (again the law blurred by the
unit-Gaussian noise, \emph{not} the law density \(q^\star\) of
Figure~\ref{fig:density}). They are broader and their lumps closer and less
resolved than at \(k=0.1\), expensive control cannot separate the signal
levels as far, and they converge to the smooth limit \(Q^\star*\phi\)
\emph{(d)} (shaded; converged \(n=9\)). The small outer lumps near
\(\pm11\) are the finite-level approximation's atoms at \(\pm10.7\) smoothed
by \(\phi\); they carry little mass and shrink under the continuous solver,
consistent with the non-convex, solver-dependent mode structure noted in
Remark~\ref{rem:atomic-vs-ac}.}
\label{fig:solconv-qn5}
\end{figure}

\begin{remark}[The finite-level solution is atomic; the true optimum is not]
\label{rem:atomic-vs-ac}
The laws \(Q_n\) computed above (Figure~\ref{fig:fn}) are atomic \emph{by
construction}: they solve the restricted problem over measures with at
most \(n\) atoms, not the original problem~\eqref{eq:OT-variational}.
This is entirely consistent with Proposition~\ref{prop:ac}, which forbids
atoms in the true minimizer \(Q^\star\). Since no atomic law is optimal,
the finite-level values stay above the optimum, \(J_n^\star\ge J^\star\),
approaching it only as \(n\to\infty\), and every weak limit point of
\((Q_n)\) is an \emph{atomless} minimizer. The no-atoms proof of
Proposition~\ref{prop:ac} explains why the approximation is nonetheless
so accurate: spreading an atom lowers the cost by a first-order amount
\(\sim k^2\kappa\,s\) in transport against only \(\sim s^2\) in
estimation, so the profitable spread is \(s=O(k^2)\). Hence for small
\(k\) the minimizer \(Q^\star\) is absolutely continuous yet \emph{sharply
peaked} near a few levels: the atoms of \(Q_n\) are discrete stand-ins
for the tall, narrow density peaks of \(Q^\star\), and a handful of them
already captures nearly all of the cost, which is why \(\widehat J_n\)
stabilizes numerically (Example~\ref{ex:convergence}) well before the
atomless optimum is reached. Figure~\ref{fig:density} confirms this
directly: solving the \emph{continuous} problem
\eqref{eq:OT-variational} (by entropic mirror descent on a fine grid)
returns a smooth, absolutely continuous density \(q^\star\), not a sum of
atoms.

Two clarifications are in order. First, Figures~\ref{fig:fn} and
\ref{fig:density} are computed at \emph{different} values of \(k\)
(\(k=0.1\) versus \(k=0.3,0.4\)) and are not meant to coincide: the
convergence \(Q_n\rightharpoonup Q^\star\) holds at \emph{fixed} \(k\),
and it is \emph{weak}, the atoms of \(Q_n\) concentrate at the modes of
\(q^\star\) rather than smoothing out. At \(k=0.1\) those modes are so
sharp that \(Q_n\) is already an excellent proxy for \(q^\star\)
(Figure~\ref{fig:fn}); at larger \(k\) the modes broaden into the visible
density of Figure~\ref{fig:density}. Second, the \emph{number} of modes
of \(q^\star\) is delicate: the problem is non-convex
(Proposition~\ref{prop:gauss-foc}), so different numerical schemes may
return different local minima, and the densities in
Figure~\ref{fig:density} should be read as computed near-optima rather
than as the certified global structure.
\end{remark}

\begin{figure}[t]
\centering
\includegraphics[width=\linewidth]{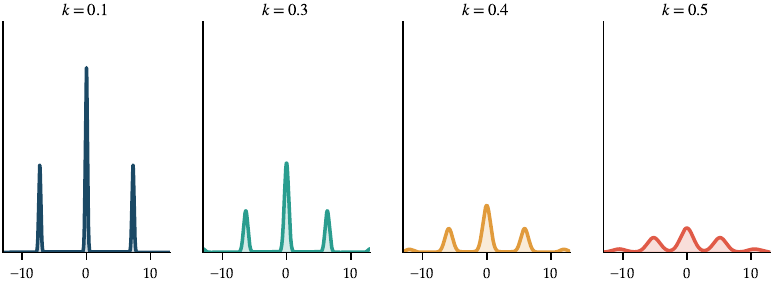}
\caption{The optimal law \(Q^\star\) is a smooth density, not a set of
atoms: the \emph{law} densities \(q^\star\) (\(\sigma=5\)) at
\(k=0.1,0.3,0.4,0.5\), on a common vertical scale. These are now
\emph{genuinely computed}: we solve the continuous problem in the transport
map \(T\) (\(Y=T(X)\), so \(Q^\star=T_\#P\)) by gradient descent with exact
first variations, warm-started from the finite-level solution and annealed
in \(k\); the resulting costs \(J^\star=0.053,0.329,0.494,0.645\) sit at or
below the finite-level upper bounds, matching them at \(k=0.1\) and matching
the continuous value \(0.33\) at \(k=0.3\) (Figure~\ref{fig:keffect}); at the
larger \(k\) the transport-map solve improves slightly on the coarser
mirror-descent estimates of Figure~\ref{fig:keffect}, as expected of a
non-convex objective with several near-optima. Each \(q^\star\) is the density of \(Q^\star\)
itself; the \emph{observation} density \(Q^\star*\phi\) of
Figures~\ref{fig:solconv-qn} and~\ref{fig:solconv-qn5} is this convolved
with the noise. \(Q^\star\) is absolutely continuous
(Proposition~\ref{prop:ac}) but concentrated: its modes are near-atomic,
here rendered with the (small) computed mode width. The structure is a
consistent five levels \(\{0,\pm\text{side},\pm\text{outer}\}\) at every
\(k\); as control cheapens (\(k:0.5\to0.1\)) the modes both \emph{sharpen}
and \emph{separate} (side modes \(\pm5.2\to\pm7.3\)), approaching the
signalling profile of the finite-level \(Q_n\) (Figure~\ref{fig:fn}).}
\label{fig:density}
\end{figure}

\begin{figure}[t]
\centering
\includegraphics[width=\linewidth]{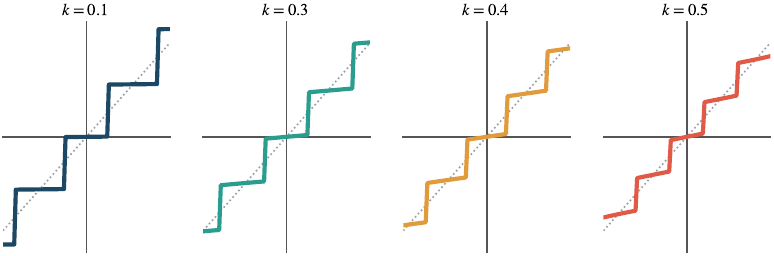}
\caption{The first controllers \(f^\star=F_{Q^\star}^{-1}\circ F_P=T\)
realizing the four laws of Figure~\ref{fig:density} (\(\sigma=5\)),
\(k=0.1,0.3,0.4,0.5\); axes are \(x\) horizontal and \(f^\star(x)\)
vertical, identity dotted (\(U_1\equiv0\)). These are the \emph{same}
computed transport maps that define \(Q^\star=T_\#P\) in
Figure~\ref{fig:density} (no separate calculation). Signalling strength
tracks \(k\): at \(k=0.1\) \((a)\) a sharp five-level staircase (plateaus at
\(0,\pm7.3,\pm14.9\) joined by steep risers), softening and shortening as
control grows expensive until at \(k=0.5\) \((d)\), just below
\(k_c\approx0.56\), the levels are closer (\(\pm5.2,\pm10.5\)) and the map is
a milder deformation of the identity. The plateau widths are the mode
masses of Figure~\ref{fig:density}.}
\label{fig:fdensity}
\end{figure}

\subsection{The effect of the control penalty \(k\)}

Finally we vary \(k\), solving the continuous problem by mirror descent
(\(\sigma=5\)) and comparing with the Gaussian-class optimum
\(J_{\mathrm{G}}\) of \S\ref{sec:gaussian-class}.

\begin{example}[Effect of \(k\)]
\label{ex:keffect}
Warm-starting the solver across \(k\), the computed optimal cost, the
Gaussian-class cost, and the number of modes of \(Q^\star\) are
\[
\begin{array}{c|cccccccc}
k & 0.1 & 0.15 & 0.2 & 0.3 & 0.4 & 0.5 & 0.55 & 0.6\\[2pt]\hline
J^\star & 0.06 & 0.11 & 0.22 & 0.33 & 0.53 & 0.79 & 0.94 & 0.96\\
J_{\mathrm{G}} & 0.25 & 0.55 & 0.96 & 0.96 & 0.96 & 0.96 & 0.96 & 0.96\\
\#\,\text{modes of }Q^\star & 5 & 5 & 5 & 3 & 3 & 3 & 3 & 1
\end{array}
\]
The Gaussian benchmark itself is not constant: for \(k\lesssim0.2\) the
best \emph{linear} controller shrinks \(Y\) toward \(0\)
(\(J_{\mathrm{G}}\approx k^2\sigma^2\), Corollary~\ref{cor:gauss-asymp}),
while for larger \(k\) it leaves \(Y\approx X_0\)
(\(J_{\mathrm{G}}\approx\operatorname{mmse}(P)\approx0.96\)). The optimal
multi-modal \emph{signalling} controller beats it by roughly a factor of
four at every small \(k\), and the gap \(J_{\mathrm{G}}-J^\star\) closes
at the \emph{linear-optimality threshold} \(k_c\approx0.56\)
(Figure~\ref{fig:keffect}), where the modes of \(Q^\star\) collapse to one
and the controller becomes affine. This is the quantitative face of the
whole development: nonlinearity pays precisely when control is cheap.
Notably \(Q^\star\) stays nonlinear \emph{beyond} the Gaussian-class
threshold \(k^2=\tfrac14\) (\(k=0.5\)): the true \(k_c\) is set by a
Hermite-mode bifurcation of the full problem, not of the Gaussian slice
(Remark~\ref{rem:hermite}). The reported \(J^\star\) are computed
near-optima (upper estimates), and the mode counts are approximate: the
non-convexity makes the exact mode transitions solver-dependent, though
the trend, multi-modal for small \(k\), collapsing to a single mode past
\(k_c\), is robust.
\end{example}

\begin{figure}[!ht]
\centering
\includegraphics[width=0.68\linewidth]{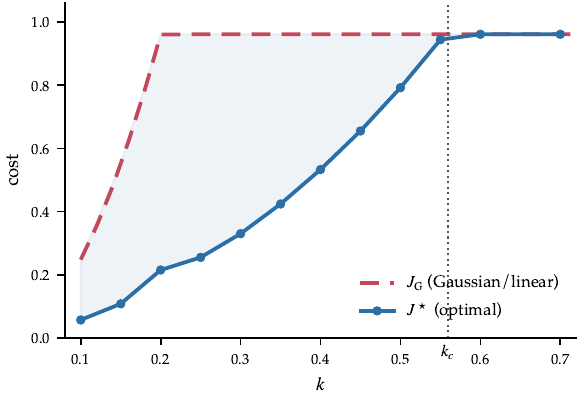}
\caption{Effect of the control penalty \(k\) (\(\sigma=5\)). The optimal
cost \(J^\star\) (solid, mirror descent on the continuous problem,
warm-started across \(k\)) lies well below the Gaussian/linear benchmark
\(J_{\mathrm{G}}\) (dashed), about a factor of four for small \(k\), and
rises to meet it at the linear-optimality threshold \(k_c\approx0.56\),
where \(Q^\star\) collapses to a single-mode Gaussian. Note
\(J_{\mathrm{G}}\) is itself small for \(k\lesssim0.2\) (the best linear
controller then shrinks \(Y\) toward \(0\)) and saturates at
\(\operatorname{mmse}(P)\approx0.96\) for larger \(k\). The nonlinear
optimum persists beyond the Gaussian-class threshold \(k^2=\tfrac14\)
(\(k=0.5\); Remark~\ref{rem:hermite}). The number of modes of \(Q^\star\)
grows as \(k\) decreases, from a single Gaussian mode for \(k\ge k_c\) to
three around \(k\approx0.4\) to \(0.5\) and five for the smallest \(k\) shown
(cf.\ Figure~\ref{fig:density}); we do not annotate these counts on the
curve because, the problem being non-convex, the exact mode transitions are
solver-dependent and only the trend (more, sharper, more widely separated
modes as control cheapens) is robust.}
\label{fig:keffect}
\end{figure}

The two ends of Figure~\ref{fig:keffect} admit exact statements. When
control is expensive the optimizer does almost nothing and the cost
saturates at the prior MMSE; when control is cheap it signals through
widely separated levels and the cost collapses to zero. Both are made
precise below.

\begin{proposition}[Asymptotic regimes]
\label{prop:asymptotics}
Let \(P=\mathcal N(0,\sigma^2)\) and
\(J^\star(k)=\inf_{Q}\{k^2W_2^2(P,Q)+\operatorname{mmse}(Q)\}\).
\begin{enumerate}
\item[\textup{(a)}] \emph{Expensive control, \(k\to\infty\).}
\(J^\star(k)\uparrow\operatorname{mmse}(P)=\dfrac{\sigma^2}{1+\sigma^2}\),
and the minimizer obeys \(W_2^2(P,Q^\star_k)=O(k^{-2})\), so
\(Q^\star_k\to P\) and the first controller tends to the identity
(\(U_1\equiv0\)). Within the Gaussian class the minimizer is unique for
\(k^2>\tfrac14\), with \(\tau^\star=\sigma-O(k^{-2})\) and
\(\operatorname{mmse}(P)-J^\star(k)=\Theta(k^{-2})\)
(Corollary~\ref{cor:gauss-asymp}).
\item[\textup{(b)}] \emph{Cheap control, \(k\to0\).} \(J^\star(k)\to0\);
more precisely
\[
J^\star(k)=O\!\bigl(k^2\log(1/k)\bigr),
\]
attained (up to the constant) by the symmetric two-level law
\(Q_M=\tfrac12(\delta_{-M}+\delta_{M})\) with
\(M=M(k)=4\sqrt{\log(1/k)}\to\infty\).
\end{enumerate}
\end{proposition}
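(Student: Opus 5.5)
For \textup{(a)} I would use only crude comparison bounds together with the continuity and concavity of \(\operatorname{mmse}\) (Fact~\ref{fact:mmse}). For \textup{(b)} I would give an explicit construction.

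\emph{Monotonicity and the ceiling in \textup{(a)}.} For each fixed \(Q\), the map \(k\mapsto k^2W_2^2(P,Q)+\operatorname{mmse}(Q)\) is nondecreasing, so the infimum \(J^\star(k)\) is nondecreasing in \(k\). Testing with \(Q=P\) gives \(J^\star(k)\le\operatorname{mmse}(P)=\sigma^2/(1+\sigma^2)\), using the Gaussian computation of Proposition~\ref{prop:gauss-obj} with \(\tau=\sigma\).

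\emph{Transport bound and convergence in \textup{(a)}.} Let \(Q^\star_k\) be a minimizer (Proposition~\ref{prop:existence}). Comparing with \(P\) and using \(\operatorname{mmse}\ge0\) from Lemma~\ref{lem:mmse-bounds},
\[
k^2W_2^2(P,Q^\star_k)\le \operatorname{mmse}(P)-\operatorname{mmse}(Q^\star_k)\le 1 .
\]
Hence \(W_2^2(P,Q^\star_k)\le k^{-2}\), so \(Q^\star_k\to P\) in \(W_2\). By Fact~\ref{fact:1d-ot} this also gives \(\mathbb E[(f^\star_k(X_0)-X_0)^2]\to0\), i.e.\ the first controller tends to the identity. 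Since \(W_2\)-convergence implies weak convergence with bounded second moments, Fact~\ref{fact:mmse} gives \(J^\star(k)\ge\operatorname{mmse}(Q^\star_k)\to\operatorname{mmse}(P)\), which proves \(J^\star(k)\uparrow\operatorname{mmse}(P)\). The Gaussian-class statements are exactly Proposition~\ref{prop:gauss-foc} and Corollary~\ref{cor:gauss-asymp}.

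\emph{The rate \(\Theta(k^{-2})\) in \textup{(a)}.} For the lower bound \(\operatorname{mmse}(P)-J^\star\gtrsim k^{-2}\), I would plug \(\tau=\sigma-c/k^2\) into~\eqref{eq:J-tau}. Because \(\frac{d}{d\tau}\frac{\tau^2}{1+\tau^2}\big|_{\tau=\sigma}=2\sigma/(1+\sigma^2)^2>0\), choosing \(c\) optimally gives a gain of \(\sigma^2/\bigl(k^2(1+\sigma^2)^4\bigr)\). For the matching upper bound, I would use concavity of \(\operatorname{mmse}\) to obtain the tangent-line inequality
\[
\operatorname{mmse}(Q)\le\operatorname{mmse}(P)+\int\Psi_P\,d(Q-P),
\]
with \(\Psi_P=y^2-2ya_P+b_P\) from Lemma~\ref{lem:mmse-derivative}; for Gaussian \(P\), \(\Psi_P\) is an explicit quadratic. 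Coupling \(P\) and \(Q\) optimally, and using that \(\Psi_P'\) grows at most linearly while second moments are uniformly bounded, I expect \(\bigl|\int\Psi_P\,d(Q-P)\bigr|\le C\,W_2(P,Q)\). Then
\[
J(Q)\ge\operatorname{mmse}(P)+k^2W^2-CW\ge\operatorname{mmse}(P)-C^2/(4k^2),
\]
with \(W=W_2(P,Q)\). This is the step I expect to need the most care: the Lipschitz-in-\(W_2\) estimate with a uniform constant. I would verify it by writing \(\int\Psi_P\,d(Q-P)=\mathbb E[\Psi_P(Y)-\Psi_P(X)]\) under the optimal coupling and applying Cauchy--Schwarz to the mean-value bound \(|\Psi_P(Y)-\Psi_P(X)|\le C(1+|X|+|Y|)\,|X-Y|\).

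\emph{Part \textup{(b)}.} Take \(Q_M=\tfrac12(\delta_{-M}+\delta_M)\). By Fact~\ref{fact:1d-ot}, the monotone map is \(x\mapsto M\,\mathrm{sign}(x)\), so
\[
W_2^2(P,Q_M)=\mathbb E(|X_0|-M)^2=\sigma^2-2M\sigma\sqrt{2/\pi}+M^2\le\sigma^2+M^2 .
\]
For the estimation term, bound the MMSE by the error of the suboptimal decision rule \(M\,\mathrm{sign}(Z)\):
\[
\operatorname{mmse}(Q_M)\le 4M^2\Pr(N>M)\le 4M^2e^{-M^2/2}.
\]
With \(M=4\sqrt{\log(1/k)}\) we have \(e^{-M^2/2}=k^8\). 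The total is therefore at most \(k^2(\sigma^2+16\log(1/k))+64k^8\log(1/k)=O(k^2\log(1/k))\), and \(J^\star(k)\le J(Q_M)\to0\). No obstacle is expected in \textup{(b)}.
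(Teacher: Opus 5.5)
Your treatment of (b) and of the qualitative part of (a) is correct. The step you flagged, the bound \(\operatorname{mmse}(P)-J^\star(k)=O(k^{-2})\), fails, and the problem is the direction of the inequality, not the Lipschitz constant. Concavity puts \(\operatorname{mmse}\) \emph{below} its tangent at \(P\), so \(\operatorname{mmse}(Q)\le\operatorname{mmse}(P)+\int\Psi_P\,d(Q-P)\). That inequality is valid, by pointwise convexity of \((g,p)\mapsto g^2/p\). Since \(\Psi_P(y)=(1-\beta)^2y^2+\beta^2\) with \(\beta=\sigma^2/(1+\sigma^2)\), your estimate \(|\int\Psi_P\,d(Q-P)|\le CW\) for bounded \(W\) is also fine. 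But inserting an \emph{upper} bound on \(\operatorname{mmse}(Q)\) into \(J(Q)=k^2W^2+\operatorname{mmse}(Q)\) yields \(J(Q)\le\operatorname{mmse}(P)+k^2W^2+CW\), not \(J(Q)\ge\operatorname{mmse}(P)+k^2W^2-CW\). The lower bound requires \(\operatorname{mmse}(Q)\ge\operatorname{mmse}(P)-C\,W_2(P,Q)\), i.e.\ control of how fast the estimation cost can \emph{drop} under transport. A tangent line at \(P\) can never give that for a concave functional. (That tangent inequality is instead the tool for the other half: restricted to Gaussian \(Q\) it reproduces, to leading order, your gain \(\sigma^2/(k^2(1+\sigma^2)^4)\).) Taking the tangent at \(Q\) does point the right way, \(\operatorname{mmse}(Q)\ge\operatorname{mmse}(P)-\int\Psi_Q\,d(P-Q)\). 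But then \(\Psi_Q\) is neither explicit nor quadratic, and you need bounds on it uniform in \(Q\).

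Here is one way to supply the missing estimate.
\begin{itemize}
\item Let \(T\) be the monotone map from \(P\) to \(Q\), \(Y_t=X+t(T(X)-X)\) and \(Q_t=\mathrm{Law}(Y_t)\).
\item The envelope computation from the proof of the discrete Euler--Lagrange equation shows that \(t\mapsto\operatorname{mmse}(Q_t)\) is touched from above at each \(t\) by a function with slope \(\mathbb E[(T(X)-X)(2e_t+Ne_t^2)]\), where \(e_t=Y_t-\mathbb E[Y_t\mid Y_t+N]\).
\item Since \(e_t=-N+\mathbb E[N\mid Y_t+N]\), conditional Jensen gives \(\|e_t\|_p\le2\|N\|_p\) for \emph{every} input law. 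By Cauchy--Schwarz and H\"older the slope is then at most \((4+4\sqrt{15})\,W_2(P,Q)\) in absolute value.
\item These one-sided Dini bounds, plus continuity of \(t\mapsto\operatorname{mmse}(Q_t)\), give \(|\operatorname{mmse}(Q)-\operatorname{mmse}(P)|\le(4+4\sqrt{15})\,W_2(P,Q)\), after routine justification (e.g.\ first for compactly supported \(Q\)).
\item Hence \(J^\star(k)\ge\operatorname{mmse}(P)-(4+4\sqrt{15})^2/(4k^2)\).
\end{itemize}
The paper does not close this direction either. Its appeal to \(k^2W_2^2\ge0\) together with \(J^\star\le J_{\mathrm G}\) only yields \(\operatorname{mmse}(P)-J^\star(k)\ge\operatorname{mmse}(P)-J_{\mathrm G}(k)=\Omega(k^{-2})\). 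Elsewhere you follow the paper. The one exception is (b), where your hard-decision bound \(\operatorname{mmse}(Q_M)\le4M^2\Pr(N>M)\le4M^2e^{-M^2/2}\) replaces the paper's \(\operatorname{sech}^2\) computation and is both simpler and sharper.
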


\begin{proof}
\emph{(a)} The law \(Q=P\) is feasible with
\(J(P)=\operatorname{mmse}(P)\), so \(J^\star(k)\le\operatorname{mmse}(P)\)
for every \(k\); and \(J^\star\) is nondecreasing in \(k\), so
\(L:=\lim_{k\to\infty}J^\star(k)\le\operatorname{mmse}(P)\) exists. Let
\(Q^\star_k\) attain \(J^\star(k)\). From
\(k^2W_2^2(P,Q^\star_k)\le J^\star(k)\le\operatorname{mmse}(P)\) we get
\(W_2^2(P,Q^\star_k)\le\operatorname{mmse}(P)/k^2\to0\), i.e.\
\(Q^\star_k\to P\) in \(W_2\) (and the transport plan tends to the
identity, \(U_1\equiv0\)). The second moments stay bounded, so by
Fact~\ref{fact:mmse} \(\operatorname{mmse}(Q^\star_k)\to
\operatorname{mmse}(P)\); hence
\(L\ge\liminf_k\operatorname{mmse}(Q^\star_k)=\operatorname{mmse}(P)\), and
\(L=\operatorname{mmse}(P)\). The rate is the Gaussian-class estimate: by
Corollary~\ref{cor:gauss-asymp}, \(\tau^\star=\sigma-O(k^{-2})\) gives
\(\operatorname{mmse}(P)-J_{\mathrm G}(k)=\Theta(k^{-2})\), and
\(k^2W_2^2\ge0\) with \(J^\star\le J_{\mathrm G}\) pins
\(\operatorname{mmse}(P)-J^\star(k)=\Theta(k^{-2})\).

\emph{(b)} Fix \(M>0\) and take \(Q_M=\tfrac12(\delta_{-M}+\delta_{M})\).
\emph{Transport.} The monotone map sends \(\{x<0\}\mapsto-M\) and
\(\{x\ge0\}\mapsto M\), so
\[
W_2^2(P,Q_M)=\mathbb E\bigl[(|X|-M)^2\bigr]
=\sigma^2-2M\sigma\sqrt{2/\pi}+M^2\le\sigma^2+M^2 .
\]
\emph{Estimation.} With \(Y\in\{\pm M\}\) and \(Z=Y+N\), the posterior
\(\Pr(Y=M\mid Z=z)=(1+e^{-2Mz})^{-1}\) gives conditional variance
\(\operatorname{Var}(Y\mid Z=z)=M^2\operatorname{sech}^2(Mz)\), so
\(\operatorname{mmse}(Q_M)=\int M^2\operatorname{sech}^2(Mz)\,p_Z(z)\,dz\)
with \(p_Z(z)=\tfrac12[\phi(z-M)+\phi(z+M)]\). Split at \(|z|=M/2\): for
\(|z|\le M/2\) both \(|z\mp M|\ge M/2\), so \(p_Z(z)\le\phi(M/2)\) and, using
\(\int\operatorname{sech}^2(Mz)\,dz=2/M\),
\[
\int_{|z|\le M/2}\!\!M^2\operatorname{sech}^2(Mz)\,p_Z\,dz
\le M^2\phi(M/2)\cdot\tfrac2M=2M\phi(M/2);
\]
for \(|z|>M/2\), \(\operatorname{sech}^2(Mz)\le4e^{-2M|z|}\le4e^{-M^2}\),
contributing at most \(4M^2e^{-M^2}\). Hence
\(\operatorname{mmse}(Q_M)\le 2M\phi(M/2)+4M^2e^{-M^2}
\le C\,M\,e^{-M^2/8}\) for a universal \(C\) and all \(M\ge1\).
\emph{Balance.} Therefore
\(J(Q_M)\le k^2(\sigma^2+M^2)+C M e^{-M^2/8}\). Choosing
\(M=4\sqrt{\log(1/k)}\) makes \(e^{-M^2/8}=k^2\), so the estimation term is
\(O\!\bigl(k^2\sqrt{\log(1/k)}\bigr)\) and the transport term is
\(16k^2\log(1/k)+O(k^2)\); thus
\(J^\star(k)\le J(Q_M)=O\!\bigl(k^2\log(1/k)\bigr)\to0\).
\end{proof}

Translating these laws through the dictionary of
Theorem~\ref{thm:ot-reformulation} (\(f^\star=F_{Q^\star}^{-1}\circ F_P\),
\(\gamma_2^\star(z)=\mathbb E[Y\mid Z=z]\)) gives the two limiting
controllers of the \emph{original} Witsenhausen problem in closed form.

\begin{corollary}[Limiting controllers]
\label{cor:limit-controllers}
Write the first controller as the map \(f^\star\) with \(Y=f^\star(X_0)\)
(control \(U_1=f^\star(X_0)-X_0\)) and the second as the decoder
\(\gamma_2^\star(z)=\mathbb E[Y\mid Z=z]\). Then, along the regimes of
Proposition~\ref{prop:asymptotics},
\begin{enumerate}
\item[\textup{(a)}] \emph{Expensive control, \(k\to\infty\) (linear
regime).} Both controllers become \emph{affine}:
\[
f^\star(x)=x\quad(U_1\equiv0),\qquad
\gamma_2^\star(z)=\frac{\sigma^2}{1+\sigma^2}\,z,
\]
i.e.\ \(C_1\) leaves \(Y=X_0\) and \(C_2\) applies the Wiener/Bayes
estimator, the classical linear solution.
\item[\textup{(b)}] \emph{Cheap control, \(k\to0\) (signalling regime).}
With \(M=M(k)=4\sqrt{\log(1/k)}\), the optimum is realized by the
two-level \emph{sign} map and its soft decoder:
\[
f^\star(x)=M\,\operatorname{sign}(x)\quad
\bigl(U_1=M\,\operatorname{sign}(X_0)-X_0\bigr),\qquad
\gamma_2^\star(z)=M\tanh(Mz).
\]
\(C_1\) pushes \(X_0\) onto the two well-separated levels \(\pm M\); \(C_2\)
reads them off. Using more levels lowers the constant but not the
\(O(k^2\log(1/k))\) rate.
\end{enumerate}
\end{corollary}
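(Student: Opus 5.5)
The plan is to read the corollary as a translation of Proposition~\ref{prop:asymptotics} through the dictionary of Theorem~\ref{thm:ot-reformulation}: \(f^\star=F_{Q^\star}^{-1}\circ F_P\) and \(\gamma_2^\star=\mathbb E[Y\mid Z=\cdot]\) from Lemma~\ref{lem:mmse-reduction}. So each part reduces to identifying the limiting law \(Q\), then computing its monotone map and its posterior mean.

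For (a), I would use Proposition~\ref{prop:asymptotics}(a), which gives \(W_2^2(P,Q_k^\star)\le\operatorname{mmse}(P)/k^2\to0\). By Fact~\ref{fact:1d-ot}, \(\mathbb E[(f_k^\star(X_0)-X_0)^2]=W_2^2(P,Q^\star_k)\), so \(f^\star_k\to\mathrm{id}\) in \(L^2(P)\) at rate \(O(k^{-1})\), and hence \(U_1\to0\) in \(L^2\). For the decoder, with \(Y=X_0\sim\mathcal N(0,\sigma^2)\) the pair \((Y,Z)\) is jointly Gaussian, so \(\mathbb E[Y\mid Z=z]=\frac{\operatorname{Cov}(Y,Z)}{\operatorname{Var}(Z)}z=\frac{\sigma^2}{1+\sigma^2}z\).

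To justify that the decoders \(m_k\) of \(Q^\star_k\) converge to this one, I would use the representation \(m(z)=\int y\phi(z-y)Q(dy)/\int\phi(z-y)Q(dy)\). Weak convergence with bounded second moments gives pointwise convergence of the numerator and the denominator, and the denominator stays positive, so \(m_k\to m_P\) pointwise and locally uniformly. This also answers what "becomes affine" means: the claim is a limiting one, not exact at finite \(k\). I would state it that way, since Remark~\ref{rem:hermite} shows that the exact affine optimum holds only for \(k\ge k_c\) and is not proved here.

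For (b), I would take the two-level law \(Q_M\) of Proposition~\ref{prop:asymptotics}(b). Its monotone rearrangement is \(F_{Q_M}^{-1}(F_P(x))\), which equals \(-M\) when \(F_P(x)<1/2\) and \(M\) otherwise, so it is \(M\operatorname{sign}(x)\) up to the null set \(\{0\}\). The posterior \(\Pr(Y=M\mid Z=z)=(1+e^{-2Mz})^{-1}\), already computed in that proof, gives \(\mathbb E[Y\mid Z=z]=M(2\Pr(Y=M\mid z)-1)=M\tanh(Mz)\). The claim that this pair "realizes the optimum" should be read in the rate sense: the two-level law \(Q_M\) has \(J(Q_M)=O(k^2\log(1/k))\), which matches the bound on \(J^\star\) up to constants.

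For the final sentence, I would use the softmax form~\eqref{eq:softmax}. An \(n\)-level equispaced law with spacing \(\sim M\) has a transport cost that is still \(\gtrsim k^2\cdot\mathrm{spacing}^2\), with the spacing forced to be \(\sim\sqrt{\log(1/k)}\) to keep the confusion error at \(O(k^2)\). This changes only the constant.

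\textbf{Main obstacle.} The difficulty is that (b) asserts optimality, while only an upper bound is available. A matching lower bound \(J^\star\gtrsim k^2\log(1/k)\) would need an argument that any \(Q\) with \(\operatorname{mmse}(Q)\le k^2\) must move \(P\) by \(W_2^2\gtrsim\log(1/k)\), since levels must be separated by \(\sim\sqrt{\log(1/k)}\). I would first try to derive this from the Fisher form of Corollary~\ref{cor:fisher-var}, but absent that I would phrase (b) as order-optimality among the rate-achieving constructions.
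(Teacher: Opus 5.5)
\paragraph{Verdict.} Your part (a) and the computational part of (b) match the paper's proof, but your plan for the optimality claim in (b) cannot work, because at fixed $\sigma$ that claim is false.

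\paragraph{What matches.} The paper also just composes the quantile function of the limiting law (for (a)) or of $Q_M$ (for (b)) with $F_P$, and evaluates the posterior mean. Your rate $\|f^\star_k-\mathrm{id}\|_{L^2(P)}\le\sqrt{\operatorname{mmse}(P)}/k$ and the pointwise convergence of the decoders make (a) more careful than the paper's ``as $Q^\star_k\to P$''. You are right that nothing proves optimality in (b). The paper's proof only computes $f$ and $\gamma_2^\star$ for $Q_M$.

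\paragraph{Why both proposed repairs fail.} The implication you would need, $\operatorname{mmse}(Q)\le k^2\Rightarrow W_2^2(P,Q)\gtrsim\log(1/k)$, is refuted by $Q=\delta_0$: it has $\operatorname{mmse}(\delta_0)=0$ and $W_2^2(P,\delta_0)=\sigma^2$. Hence $J^\star(k)\le k^2\sigma^2$. On the other hand, $J(Q_M)\ge k^2W_2^2(P,Q_M)=k^2\bigl(M^2-2M\sigma\sqrt{2/\pi}+\sigma^2\bigr)\sim16k^2\log(1/k)$. So $J(Q_M)/J^\star(k)\to\infty$, and $Q_M$ is not even order-optimal. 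Your phrase ``matches the bound up to constants'' only compares $Q_M$ with the upper bound that $Q_M$ itself produced. Once $M\ge2\sigma\sqrt{2/\pi}$, i.e.\ $k\le e^{-\sigma^2/(2\pi)}$, the sign map is strictly worse than full cancellation. The same example defeats your heuristic for the final sentence, since a single level already achieves the rate $k^2$.

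\paragraph{The true cheap-control limit.} It follows from the same compactness tools you used in (a).
\begin{enumerate}
\item Minimizers satisfy $W_2^2(P,Q^\star_k)\le J^\star(k)/k^2\le\sigma^2$ and $\operatorname{mmse}(Q^\star_k)\le k^2\sigma^2\to0$. They are therefore tight, and by Fact~\ref{fact:mmse} every weak limit $Q_0$ has $\operatorname{mmse}(Q_0)=0$.
\item This forces $Q_0=\delta_a$. If $Y=m(Y+N)$ a.s., then since $y+N$ has full support, $m\equiv y$ Lebesgue-a.e.\ for $Q_0$-a.e.\ $y$, so $y$ is unique.
\item Lower semicontinuity of $W_2^2(P,\cdot)$ along the subsequence gives $\sigma^2+a^2\le\liminf J^\star(k)/k^2\le\sigma^2$.
\end{enumerate}
Hence $a=0$, $J^\star(k)=k^2\sigma^2(1+o(1))$, $Q^\star_k\rightharpoonup\delta_0$, and $f^\star_k=F_{Q^\star_k}^{-1}\circ F_P\to0$ pointwise. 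At fixed $\sigma$ the limiting first controller is full cancellation $U_1=-X_0$ with decoder tending to $0$, not $M\operatorname{sign}(x)$ with $M\to\infty$.

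\paragraph{What (b) can honestly claim.} Only the computation you and the paper both do: $Q_M$ is a feasible law with $J(Q_M)=O(k^2\log(1/k))$, implemented by $f=M\operatorname{sign}$ and $\gamma_2^\star(z)=M\tanh(Mz)$. A signalling controller can survive in the limit only if $\sigma$ is allowed to grow as $k\to0$.
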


\begin{proof}
\emph{(a)} As \(Q^\star_k\to P\), the monotone map
\(F_{P}^{-1}\circ F_P\) is the identity, so \(f^\star(x)=x\) and
\(Y=X_0\sim\mathcal N(0,\sigma^2)\); for \(Z=Y+N\) with \(N\sim\mathcal
N(0,1)\) independent, \(\mathbb E[Y\mid Z=z]=\frac{\operatorname{Cov}(Y,Z)}
{\operatorname{Var}(Z)}\,z=\frac{\sigma^2}{1+\sigma^2}z\).
\emph{(b)} For \(Q_M=\tfrac12(\delta_{-M}+\delta_M)\) the quantile inverse
\(F_{Q_M}^{-1}\) equals \(-M\) on \((0,\tfrac12)\) and \(M\) on
\((\tfrac12,1)\); composing with \(F_P\) (which sends \(\{x<0\}\) to
\((0,\tfrac12)\)) gives \(f^\star(x)=M\,\operatorname{sign}(x)\). The
decoder is the posterior mean computed in the proof of
Proposition~\ref{prop:asymptotics}(b): with
\(p:=\Pr(Y=M\mid Z=z)=(1+e^{-2Mz})^{-1}\),
\(\gamma_2^\star(z)=M(2p-1)=M\tanh(Mz)\).
\end{proof}

The sign map of part~(b) is the coarsest instance of the signalling
staircase \(f^\star=F_{Q^\star}^{-1}\circ F_P\) of
Figure~\ref{fig:map}, and of the finite-level staircases of
Figure~\ref{fig:fn}, which refine it into more levels as \(n\) grows.

Part~(b) also explains the mode counts of Figure~\ref{fig:keffect}: as
\(k\downarrow0\) the optimal levels spread out and multiply (each added,
well-separated level cuts the residual MMSE at only \(O(k^2)\) transport
cost), whereas past \(k_c\) the transport pull dominates and the levels
collapse to the single Gaussian mode of part~(a).

\section{Conclusion}
\label{sec:conclusion}

We have studied the scalar Witsenhausen counterexample as an
optimal-transport problem. Taking as our starting point the transport
formulation of Wu and Verd\'u~\cite{wu2011witsenhausen}, in which the first
controller is a map in Wasserstein space, we recast the counterexample as
the variational problem
\(J^\star=\inf_Q\{k^2W_2^2(P,Q)+\operatorname{mmse}(Q)\}\), a competition
between a quadratic transport cost and a minimum mean-square estimation
cost, and developed from it a self-contained variational and computational
theory. On the analytic side we proved the reformulation, showed that the
optimal first controller is the monotone rearrangement pushing the prior
onto the minimizer, and characterized the minimizer \(Q^\star\): it exists,
is absolutely continuous (never atomic), obeys an Euler-Lagrange condition
that takes its cleanest form through the MMSE-Fisher identity, and reduces,
within the Gaussian class, to a semi-closed-form benchmark with an explicit
linear-optimality threshold.

The central message is structural: restricting to finitely supported laws
turns the problem into an \emph{MMSE-regularized optimal quantizer}. Its
stationarity conditions pair centroid levels with Voronoi decision cells and
reduce to the classical Lloyd-Max quantizer as control becomes expensive,
so the estimation term is exactly a regularizer of a familiar
quantization problem. This viewpoint both explains the shape of the optimal
controller, a signalling staircase whose sharpness and number of levels
grow as control cheapens, and suggests how to compute it: a
deterministic-annealing homotopy in the control penalty \(k\), seeded by
level-splitting, that tracks the global branch through the bifurcations
responsible for the nonconvexity. The small- and large-\(k\) asymptotics and
the explicit limiting controllers (affine when control is expensive, a
two-level signalling quantizer when it is cheap) pin down the two ends of
this behaviour, and the numerical study illustrates the full transition,
including the passage from the near-atomic optimizer at small \(k\) to the
smooth, broad density near the linear-optimality threshold.

Several directions remain open. The optimizer's exact mode structure at
moderate \(k\) is governed by the nonconvexity and is not certified here; a
complete analysis of the Hermite-mode bifurcation of
Remark~\ref{rem:hermite}, in particular a closed form for the transport
Hessian eigenvalue that would pin down \(k_c\) analytically, would settle
the linear-optimality threshold. The quantization viewpoint extends
verbatim to the \emph{vector} counterexample, where the decision regions
become genuine Voronoi polytopes and the program an MMSE-regularized vector
quantizer; making the deterministic-annealing solver rigorous there, and
quantifying its optimality gap, is a natural next step. Finally, the
transport-estimation trade-off studied here is a template for other
decentralized problems with a nonclassical information pattern, where the
same tension between moving a state and keeping it estimable is at play.

\appendix
\section{Continuous Solver via Transport Map and Homotopy}
\label{app:solver}

This appendix details the solver that produces the genuinely computed
continuous optimizers of Figures~\ref{fig:density} and~\ref{fig:fdensity}.
The method minimizes the functional
\(J(Q)=k^2W_2^2(P,Q)+\operatorname{mmse}(Q)\) directly over target laws, using
the transport map as the decision variable, an exact first variation, a
finite-level warm start, and a homotopy in the control penalty \(k\). A
reference implementation accompanies the paper as ancillary files;
the correspondence between scripts and the steps below is given at the end.

\subsection{Transport-map parametrization}
In one dimension the optimal coupling between \(P\) and any target \(Q\) is the
monotone rearrangement, so nothing is lost by representing \(Q\) through the
increasing map \(T=F_Q^{-1}\circ F_P\) that transports \(P\) to \(Q\). Writing
\(Y=T(X)\) with \(X\sim P\), we have \(Q=T_\#P\), the Wasserstein cost collapses
to a plain second moment, and the whole objective becomes a functional of the
single map \(T\):
\begin{equation}
\label{eq:app-JT}
J(T)=k^2\,\mathbb E\!\left[(X-T(X))^2\right]+\operatorname{mmse}(T_\#P),
\qquad
\operatorname{mmse}(T_\#P)=\mathbb E\!\left[(Y-m(Z))^2\right],
\end{equation}
where \(Z=Y+W\), \(W\sim\mathcal N(0,1)\) independent of \(Y\), and
\(m(z)=\mathbb E[Y\mid Z=z]\) is the posterior mean. This eliminates the
Wasserstein optimization entirely: monotone maps are the feasible set, and any
increasing \(T\) is automatically the optimal transport to its own image.
Because the optimal map is odd, \(T(-x)=-T(x)\) (the prior and the estimation
channel are symmetric), we optimize over the antisymmetric subspace, which
halves the effective dimension and removes a spurious symmetry-breaking
direction from the search.

\subsection{Exact first variation}
The transport term in \eqref{eq:app-JT} is quadratic in \(T\), with variation
\(2k^2\,(T(x)-x)\) weighted by \(P\). The estimation term is handled through
its dependence on both the level \(T(x)\) and the likelihood it induces. Let
\(p_Z(z)=\int \varphi(z-T(x))\,P(dx)\) be the observation density,
\(g(z)=\int T(x)\,\varphi(z-T(x))\,P(dx)\), so that \(m(z)=g(z)/p_Z(z)\), and
write the pointwise estimation error \(e(x,z)=T(x)-m(z)\). Differentiating
\(\operatorname{mmse}=\mathbb E[Y^2]-\int g^2/p_Z\,dz\) through both the
reproduction level and the Gaussian kernel \(\varphi(z-T(x))\) gives the
first variation
\begin{equation}
\label{eq:app-grad}
\frac{\delta J}{\delta T}(x)
= 2k^2\,(T(x)-x)
+\int \varphi\!\left(z-T(x)\right)
\Big[\,2\,e(x,z)+\left(z-T(x)\right)e(x,z)^2\,\Big]\,dz .
\end{equation}
To see where \eqref{eq:app-grad} comes from it is cleanest to differentiate the
\emph{discretized} estimation cost
\(\operatorname{mmse}=\sum_i w_i y_i^2-\int g^2/p_Z\,dz\) with respect to a single
level \(y_j\) (the continuous statement is the same computation with \(w_j\) a
mass element). From \(p_Z=\sum_i w_i\varphi(z-y_i)\),
\(g=\sum_i w_i y_i\varphi(z-y_i)\), and
\(\partial_{y_j}\varphi(z-y_j)=(z-y_j)\varphi(z-y_j)\),
\begin{equation}
\label{eq:app-partials}
\partial_{y_j}p_Z=w_j\,(z-y_j)\,\varphi(z-y_j),
\qquad
\partial_{y_j}g=w_j\,\varphi(z-y_j)\big[\,1+y_j(z-y_j)\,\big].
\end{equation}
Writing \(m=g/p_Z\) and using
\(\partial_{y_j}\!\big(g^2/p_Z\big)=2m\,\partial_{y_j}g-m^2\,\partial_{y_j}p_Z\),
\begin{equation}
\label{eq:app-mmsegrad}
\partial_{y_j}\operatorname{mmse}
=2w_jy_j-\int\!\big(2m\,\partial_{y_j}g-m^2\,\partial_{y_j}p_Z\big)\,dz
=w_j\!\int\!\varphi(z-y_j)\big[\,2e_j+(z-y_j)e_j^2\,\big]\,dz,
\end{equation}
with \(e_j=y_j-m(z)\); the last equality collects terms and drops a multiple of
the vanishing first moment \(\int(z-y_j)\varphi(z-y_j)\,dz=0\). Adding the
transport gradient \(2k^2w_j(y_j-x_j)\) yields the discrete form of
\eqref{eq:app-grad} used in the code.

The bracket has a transparent reading: the first term drives each level toward
the current posterior mean (the estimation analogue of a centroid pull), while
the second is the score-weighted correction \((z-T(x))=\partial_{T}\log
\varphi(z-T(x))\) that accounts for how moving a level reshapes the likelihood,
and hence the estimate at every \(z\). Expression \eqref{eq:app-grad} is the
score-form Euler--Lagrange operator of Section~3 evaluated off stationarity;
setting it to zero recovers the optimality condition. Supplying
\eqref{eq:app-grad} to a quasi-Newton method makes each iteration exact rather
than finite-differenced, which is what allows the solver to resolve the
near-atomic optimizers cleanly.

\subsection{Discretization}\label{app:disc}
Fix a symmetric grid \(\{x_i\}_{i=1}^N\) spanning several standard deviations of
\(P\) with Gaussian weights \(w_i\propto\varphi_\sigma(x_i)\), \(\sum_i w_i=1\);
a map is then the vector \(y=(y_i)\) with \(y_i=T(x_i)\). The two terms of
\eqref{eq:app-JT} become
\[
k^2\sum_i w_i (x_i-y_i)^2,
\qquad
\operatorname{mmse}=\sum_i w_i y_i^2-\int \frac{g(z)^2}{p_Z(z)}\,dz,
\]
with \(p_Z(z)=\sum_i w_i\,\varphi(z-y_i)\) and \(g(z)=\sum_i w_i y_i
\varphi(z-y_i)\), and the \(z\)-integral evaluated by a trapezoidal rule on a
fine, wide quadrature grid \(\{z_a\}\). The gradient \eqref{eq:app-grad}
discretizes to the same sums through \eqref{eq:app-partials}, so the objective
and its exact gradient are assembled in a single pass over the \(N\times M\)
kernel matrix \(K_{ai}=\varphi(z_a-y_i)\), where \(M=|\{z_a\}|\); each L-BFGS-B
iteration therefore costs \(O(NM)\) and no finite differencing.

Concretely we place the map nodes on \([-4.5\sigma,4.5\sigma]\) with \(N=181\)
points (Gaussian weight \(w_i\propto\varphi_\sigma(x_i)\), renormalized), and
the quadrature grid on \([-110,110]\) with \(M=3201\) points
(\(\Delta z\approx0.069\)). The observation grid must comfortably contain
\(Z=Y+W\): the levels reach \(|y|\approx15\) and the noise adds a few more units,
so a half-width of \(110\) leaves the Gaussian tails of \(p_Z\) negligible at the
boundary and the trapezoidal rule accurate. Two safeguards keep the ratios
well behaved: \(p_Z\) is floored at a tiny constant before dividing, and the
antisymmetry \(T(-x)=-T(x)\) is enforced by a linear projection
\(y\mapsto\tfrac12(y-\tilde y)\) (with \(\tilde y_i=-y_{N+1-i}\) the reflected
map) applied to the iterate at the start of every objective evaluation; by the
chain rule the returned gradient is projected the same way, so the optimizer
moves only within the antisymmetric subspace. The discrete law \(T_\#P\) is a
fine mixture whose MMSE tracks the continuous value to plotting accuracy in the
signalling regime, and refining \(N\) or \(M\) leaves the recovered \(J^\star\)
and mode locations unchanged to the reported digits.

The analytic gradient was validated against central differences,
\(\partial_{y_j}J\approx[J(y+\epsilon e_j)-J(y-\epsilon e_j)]/2\epsilon\) with
\(\epsilon=10^{-5}\), on a smooth non-staircase test map: the maximum relative
error is of order \(10^{-3}\), and is concentrated at the few nodes where the
gradient itself nearly vanishes (so the relative measure is inflated); the
absolute agreement is uniformly tight.

\subsection{Optimizer}
We minimize \eqref{eq:app-JT} with L-BFGS-B, feeding it the exact gradient
\eqref{eq:app-grad}; the box is left unconstrained (the antisymmetry projection
of \S\ref{app:disc} already removes the only troublesome direction). Because the gradient
carries no finite-difference noise we can use tight stopping tolerances
(\(\texttt{ftol}=10^{-13}\), \(\texttt{gtol}=10^{-9}\), up to a few thousand
iterations), and from the warm start below convergence is fast and monotone,
typically a few tens of iterations to a stationary map. The limited-memory
Hessian is well suited to the problem: away from the plateau edges the objective
is smooth, and the curvature is dominated by the diagonal transport term
\(2k^2w_i\), which L-BFGS-B captures quickly.

\subsection{Finite-level warm start}\label{app:warm}
The functional \eqref{eq:app-JT} is nonconvex: it admits many near-optimal
signalling structures differing in the number and spacing of levels, and a cold
start from the identity map \(T=\mathrm{id}\) drifts into a suboptimal regular
lattice with too many equally spaced modes and a cost well above the optimum
(an identity-initialized homotopy returns a seven-to-nine level lattice with,
e.g., \(J(0.1)\approx0.13\), more than double the true \(0.053\)). We therefore
seed the optimizer from the reliable finite-level solution of Section~4. Given
the \(n{=}5\) equal-mass quantizer levels \(y_1<\dots<y_5\) with masses
\(w^{(1)},\dots,w^{(5)}\), we place cell boundaries at the prior quantiles
matching the cumulative masses,
\[
\zeta_i=\sigma\,\Phi^{-1}\!\Big(\textstyle\sum_{\ell\le i}w^{(\ell)}\Big),
\qquad i=1,\dots,4,
\]
and set the staircase \(T_0(x)=y_i\) for \(x\in(\zeta_{i-1},\zeta_i]\)
(with \(\zeta_0=-\infty,\zeta_5=+\infty\)); by construction \(T_0\) pushes \(P\)
exactly onto the finite-level constellation
\(\sum_i w^{(i)}\delta_{y_i}\). A light Gaussian smoothing of \(T_0\) (a filter
of width a few grid nodes) turns the jumps into steep but differentiable risers,
giving a valid starting map in the correct signalling basin. L-BFGS-B on
\eqref{eq:app-JT}--\eqref{eq:app-grad} then converges to the continuous
minimizer. The recovered costs \(J^\star=0.053,0.329,0.494,0.645\) at
\(k=0.1,0.3,0.4,0.5\) meet the finite-level upper bounds and, at \(k=0.3\),
the independently computed value of Figure~\ref{fig:keffect}.

\subsection{Homotopy in the control penalty}
Optimizers vary continuously with \(k\): as control cheapens the side modes move
outward and sharpen, and as it becomes expensive they merge toward the affine
map. We exploit this by annealing, warm-starting each penalty from the converged
map at the neighbouring one and sweeping \(k\) monotonically over a schedule
\(k_1<\dots<k_M\) (in the figures \(k\in\{0.1,0.3,0.4,0.5\}\) for the panels and
a finer grid up to \(0.7\) for the sweep of \S\ref{app:sweep}). Only the first penalty uses
the finite-level staircase of \S\ref{app:warm}; thereafter each solve inherits the
previous map, so the constellation deforms continuously, the side modes sliding
inward and the outer modes gaining mass as \(k\) rises. This deterministic
homotopy keeps the solver on a single smooth branch of minimizers, avoids
re-seeding at every penalty, and mirrors the deterministic-annealing homotopy
used for the finite-level program in Section~4. The overall procedure is
summarized in Algorithm~\ref{alg:contsolve}.

\begin{algorithm}[t]
\caption{Continuous solver via transport map and homotopy}
\label{alg:contsolve}
\KwIn{penalty schedule \(k_1<\dots<k_M\); prior grid \((x_i,w_i)\); quadrature
grid \(\{z_a\}\); finite-level levels/masses at the first penalty}
\KwOut{optimal maps \(T^\star_{k}\) and costs \(J^\star(k)\)}
Build staircase \(T_0\) from the finite-level solution; \(y\leftarrow\) smoothed
\(T_0\)\;
\For{\(m=1,\dots,M\)}{
  \(y\leftarrow\) \textsc{L-BFGS-B}\(\big(y;\ J(\cdot,k_m),\ \nabla
  J(\cdot,k_m)\big)\) using \eqref{eq:app-JT} and the exact gradient
  \eqref{eq:app-grad}, projected onto the antisymmetric subspace\;
  record \(T^\star_{k_m}\leftarrow y\), \(J^\star(k_m)\leftarrow J(y,k_m)\);
  warm-start the next penalty from \(y\)\;
}
\Return \(\{(T^\star_{k_m},J^\star(k_m))\}\)\;
\end{algorithm}

\subsection{The effect-of-\(k\) sweep}\label{app:sweep}
For the cost curve of Figure~\ref{fig:keffect} we need \(J^\star(k)\) over a
range of penalties, not just the four figure panels, and the non-convexity means
a single warm-started branch can miss the best structure at a given \(k\). We
therefore use a light multi-start at each penalty: several symmetric five-level
staircases \(T_0\) with side spacing \(s\) varied over a small set (larger \(s\)
for small \(k\), where the modes sit far out, smaller \(s\) for large \(k\)),
together with the identity map (which is the relevant seed as \(k\uparrow k_c\),
where the optimizer is nearly affine). Each seed is optimized by L-BFGS-B and the
smallest \(J\) is kept. This is enough to trace the signalling branch cleanly and
to capture its meeting with the Gaussian benchmark near \(k_c\); it is the sweep
labelled ``multi-start'' in the code.

\subsection{Validation}
Several independent checks support the reported optima.
\begin{enumerate}
\item[(i)] \emph{Gradient.} The analytic first variation matches central
differences to a relative error of order \(10^{-3}\) (\S\ref{app:disc}), so the
quasi-Newton steps are exact.
\item[(ii)] \emph{Upper bounds.} At every \(k\) the continuous \(J^\star\) lies
at or below the finite-level costs \(J_n\) of Section~4 (which are genuine upper
bounds, since an atomic law is feasible), and approaches them as the finite
level resolves the same constellation; e.g.\ \(J^\star(0.1)=0.053\) equals
\(J_5\), while \(J^\star(0.5)=0.645\) sits just under \(J_9\).
\item[(iii)] \emph{Cross-method agreement.} At \(k=0.3\) the transport-map value
\(0.329\) agrees with the independent mirror-descent value \(0.33\) reported in
Figure~\ref{fig:keffect}.
\item[(iv)] \emph{Symmetry and structure.} The returned maps are antisymmetric
to machine precision (by the projection of \S\ref{app:disc}), and their plateau locations
reproduce the finite-level constellation \(\{0,\pm\text{side},\pm\text{outer}\}\),
with the side modes moving from \(\pm7.3\) at \(k=0.1\) to \(\pm5.2\) at
\(k=0.5\).
\item[(v)] \emph{Mesh independence.} Refining the map grid \(N\) or the
quadrature grid \(M\) changes \(J^\star\) and the mode locations only beyond the
reported digits.
\end{enumerate}

\subsection{Scope and outputs}
The maps returned are the transport maps \(f^\star=T\) of
Figure~\ref{fig:fdensity}. Their pushforwards \(Q^\star=T_\#P\) are the law
densities of Figure~\ref{fig:density}: to render \(q^\star\) we refine \(T\) onto
a fine \(x\)-grid, push the Gaussian prior through it, and form the
change-of-variables density directly from the resulting \((y,w)\) samples with a
small kernel bandwidth (the honest picture of an absolutely continuous but sharply
peaked law, Proposition~\ref{prop:ac}; the bandwidth narrows as the modes sharpen
at small \(k\)). This is a genuine density estimate of \(Q^\star\), not a
prescribed profile. Convolving instead with the unit noise \(\varphi\) gives the
observation densities \(Q^\star*\varphi\) of the limit panels in
Figures~\ref{fig:solconv-qn} and~\ref{fig:solconv-qn5}. All three plotted
objects thus come from one solve per \(k\).

One caveat delimits the method's reliability. The discrete mixture \(T_\#P\)
slightly \emph{under}-counts the MMSE of a genuinely smooth law, because a fine
set of atoms is easier to estimate than the continuous density it approximates.
Below the linear-optimality threshold \(k_c\approx0.56\) the true optimizer is
itself a sharp signalling constellation, so the bias is negligible and the
computed \(J^\star\) is trustworthy; this is the regime of
Figures~\ref{fig:density} and~\ref{fig:fdensity} \((k\le0.5)\). Above \(k_c\),
where the optimizer is the smooth Gaussian branch, the same bias would let a
spurious multimodal map appear to beat the affine one, so there the closed-form
Gaussian benchmark \(J_G(k)\) of Section~3, not the transport-map solver, is
authoritative. Figure~\ref{fig:keffect} accordingly reports the solver's
near-optima only on the signalling branch and defers to \(J_G\) as
\(k\uparrow k_c\).

The reference implementation is organized as follows. The objective
\eqref{eq:app-JT} and the exact gradient \eqref{eq:app-grad} live in
\texttt{witsenhausen\_ot.py} (with the finite-difference gradient check); the
finite-level warm-start solver is \texttt{finite\_level.py}; the annealed
transport-map solve of Algorithm~\ref{alg:contsolve} is
\texttt{continuous\_solver.py}; the multi-start sweep behind
Figure~\ref{fig:keffect} is \texttt{j\_curve.py}; and the computed-data
figures (Figures~\ref{fig:fn}, \ref{fig:converge}, and
\ref{fig:solconv-f}--\ref{fig:keffect}) are rendered as
vector graphics by \texttt{make\_figures.py} (with
\texttt{extract\_figure\_data.py} available to emit the raw coordinates). In
particular the law density of Figure~\ref{fig:density} is drawn directly from
the solver map, by pushing the prior through \(T\) and forming the
change-of-variables density, rather than from a prescribed profile.


\bibliographystyle{alpha}   
\bibliography{reference}      

\end{document}